\newtheorem{theorem}{Theorem}[section]
\newtheorem{lemma}[theorem]{Lemma}
\newtheorem{corollary}[theorem]{Corollary}
\theoremstyle{definition}
\newtheorem{definition}[theorem]{Definition}
\theoremstyle{remark}
\numberwithin{equation}{section}
\numberwithin{figure}{section}
\newcommand{\bs}[1]{{\boldsymbol{#1}}}
\newcommand{\oname}[1]{\textrm{#1}}
\newcommand{\abs}[1]{\left|#1\right|}
\newcommand{\eqdef}{\stackrel{\mathrm{def}}{=\joinrel=}}
\begin{document}

\title[Stability of FDM for Advection-Diffusion Equations]{On the Stability of Explicit Finite Difference Methods for Advection-Diffusion Equations}

\author[X.~Zeng]{Xianyi Zeng}
\address{Department of Mathematical Sciences,\\
         Computational Science Program, University of Texas at El Paso, El Paso, TX 79902, United States.\\
         Tel.: +1-915-747-6759}
\email[Corresponding author, X.~Zeng]{xzeng@utep.edu}

\author[M.~Hasan]{Md Mahmudul Hasan}
\address{Computational Science Program, University of Texas at El Paso, El Paso, TX 79902, United States.}
\email[M.~Hasan]{mhasan5@miners.utep.edu}

\date{\today}

\subjclass[2010]{65M06 \and 65M12}

\keywords{
  Finite difference method;
  Advection-diffusion equation;
  Positive trigonometric polynomials;
  Stability analysis;
  Runge-Kutta method;
  Fourier analysis.
}

\begin{abstract}
%My abstract: 150 -- 250 words.
In this paper we study the stability of explicit finite difference discretizations of linear advection-diffusion equations (ADE) with arbitrary order of accuracy in the context of method of lines.
The analysis first focuses on the stability of the system of ordinary differential equations (ODE) that is obtained by discretizing the ADE in space and then extends to fully discretized methods where explicit Runge-Kutta methods are used for integrating the ODE system.
In particular, it is proved that all stable semi-discretization of the ADE gives rise to a conditionally stable fully discretized method if the time-integrator is at least first-order accurate, whereas high-order spatial discretization of the advection equation cannot yield a stable method if the temporal order is too low.
In the second half of this paper, we extend the analysis to a partially dissipative wave system and obtain the stability results for both semi-discretized and fully-discretized methods.
Finally, the major theoretical predictions are verified numerically.
\end{abstract}

\maketitle

\section{Introduction}
\label{sec:intro}
Numerical methods for partial differential equations that arise in engineering applications and physics problems have flourished in the past decades. 
In reality, these equations are usually complicated and involve terms that have different mathematical characteristics, such as advection and diffusion; to this end, a common practice is to select independent discretization operators to handle each term separately.
On the one hand, these operators are usually well studied in solving simple model equations -- such as the upwind or upwind-biased methods for linear advection equations and central schemes for diffusion equations.
In the context of method of lines, yet another ``dimension'' of the overall strategy is the time integrator, which has been extensively discussed in many texts on solving ordinary differential equations (ODE).
On the other hand, combining these numerical components may yield properties that are different from those of the individual methods when applied to their corresponding model equations.
A well-known example is that central difference in space and forward Euler in time is unstable for advection equations; however, when it is combined with the central difference for the diffusion term, the resulting method is conditionally stable for solving advection-diffusion equations (ADE) and is known as the FTCS method (Forward-Time Central-Space) in early literature, see for example~\cite{KWMorton:1980a,ARigal:1989a,BJNoye:1990a} and the references therein.

Hence when choosing numerical components to solve a more complicated problem, it is very important to understand the accuracy and stability properties (especially the latter) of the combined method.
In this work, we make an effort in this direction by analyzing general finite difference methods (FDM) discretizing the linear ADEs and a partially dissipative wave system in the context of method of lines.
In particular, it is assumed that an optimally accurate and stable finite-difference discretization operator (FDO) is used to discretize the advection term and an optimally accurate central FDO is chosen for the diffusion term; otherwise we do not impose any restriction on how these FDOs are selected and they can have arbitrary orders of accuracy.
Such a combination reflects a common practice in application areas including fluid mechanics, weather and climate predictions, and cell dynamics in tumor modeling; hence it excludes the central ones like FTCS and more recent Pad\'{e}-type compact methods~\cite{SKLele:1992a,XLiu:2013a}, which have enjoyed popularity in wave propagation and acoustics problems due to their very low numerical dissipation. 
Nevertheless, the authors do not see major difficulty extending the methodology presented here to central schemes.

Finite difference methods for linear ADEs have always been an active research area; however, most existing works concentrate on particular low to moderate-order schemes, where the von Neumann stability analysis or the spectral analysis are relatively easy to conduct as the characteristic function takes a simple form, see for example the inexhausted list of publications~\cite{JLSiemieniuch:1978a,KWMorton:1980a,DFGriffiths:1980a,ARigal:1989a,BJNoye:1990a,MLWitek:2008a,AMohebbi:2010a}.
In an earlier work by Tony F. Chan~\cite{TFChan:1984a}, the author proposed a recursive approach that is based on the Schur-Cohn theory to verify the stability of a method of arbitrary order; however, no direct stability result is derived for these general schemes.
To the best of the knowledge of the authors, the present work is a first attempt of the kind to derive a theory on the stability of a very general class of FDMs for linear ADEs and a derived partially dissipative wave system.

To this end, our analysis is carried out in three parts.
The first part focuses on the semi-discretized schemes for linear ADEs.
In particular, Section~\ref{sec:prob} introduces the model Cauchy problem of a linear ADE and the notations that are used throughout the paper.
We also explicitly construct in this section the FDOs with optimal accuracy given a stencil with arbitrary width for both the advection term and the diffusion term.
The stability analysis of the ODE system obtained by discretizing the linear ADE in space is provided in Section~\ref{sec:stab}; and we show that if a stable FDO is chosen for the advection term, then any central FDO for the diffusion term results in a stable ODE system.
The proof is based on a careful examine of the trajectory of eigenvalues (denoted by $\Lambda$) underlying this ODE system and showing that it stays in the left complex plane using classical theories by Iserles and Strang~\cite{AIserles:1983a} and a result due to Vietoris~\cite{LVietoris:1959a,RAskey:1974a} in positive trigonometric polynomials.

At the end of Section~\ref{sec:stab}, we obtain a global bound on $\Lambda$ as well its behavior close to the origin of the complex plane.
These results help us to prove the main theorems in Section~\ref{sec:full}, which composes the second part of this work.
In particular, we show that for the linear ADE, the stable spatial discretizations can be combined with any time-integrator to yield a conditionally stable fully-discretized method, as long as the temporal scheme is at least first-order accurate.
Additionally, we obtain an interesting instability result in the vanishing viscosity limit -- a high-order spatial discretization of the advection equation cannot be paired with some very popular low-order time-integrators to give a stable fully-discretized scheme, which include the first Euler method and the second-order two-stage Runge-Kutta scheme.
Although we focus on single-step and multi-stage explicit Runge-Kutta methods in this section, the analysis easily extends to other schemes such as the implicit and multi-step ones.

In the third part, the previous analysis is extended to a partially dissipative wave system, which serves as a model for flow equations where viscosity presents in the momentum equation but not in the pressure or energy equation.
Our analysis shows that even though dissipation appears only in one of two coupled equations, the trajectory of eigenvalues exhibits similar trait as that of a scalar ADE; hence it gives rise to conditionally stable fully discretized methods of arbitrary order accuracy.

An important simplification that we make is a periodic domain for both equations; hence the effects of boundary conditions are omitted in all three parts of the analysis.
However, our results remain valuable in the case of initial boundary value problems (IBVP), due to a classical theory by Godunov and Ryabenkii~\cite{SKGodunov:1963a}, see also~\cite{KWMorton:1980a}.
In particular, it was proved therein that in the limit $h\to0$ where $h$ is the grid size, the stability of a method for a periodic problem is necessary for the stability of this method when it is applied to solve an IBVP, no matter how the boundary condition is handled. 
Extending the present stability analysis to IBVPs along this line is work in progress and we hope to present it in a future publication soon.

The remainder of the paper is organized as follows.
The main analysis results are presented in Section~\ref{sec:prob}--Section~\ref{sec:wave}, as described in the three parts before.
All our major theoretical results are verified numerically in Section~\ref{sec:num}.
Finally, Section~\ref{sec:concl} concludes this paper and offers some further discussions.

\section{A Model Equation and Discrete Differential Operators}
\label{sec:prob}
We consider the Cauchy problem of the one dimensional (1D) linear advection-diffusion equation:
\begin{equation}\label{eq:prob_ade}
  w_t + w_x - \nu w_{xx} = 0
\end{equation}
on a closed interval $x\in\Omega=[0,\;1]$ and $t\in[0,\;T]$, where $\nu>0$ is the constant diffusivity.
The periodic boundary conditions $w(0,t) = w(1,t)$ and $w_x(0,t) = w_x(1,t)$ are supposed so that the analysis focuses on the spatial discretization of interior points.

The computational domain $\Omega$ is divided into $N$ uninform intervals with grid points $x_j=jh\;,\ j=0,\cdots,N$, where $h=1/N$ is the uniform cell size.
The semi-discretized solutions and the fully-discretized solutions are denoted $w_j(t)\approx w(x_j,t)$ and $w_j^n\approx w(x_j,t^n)$, respectively; here $t^n=n\Delta t$ is the $n$-th time stage and $\Delta t>0$ is the uniform time step size.
Due to the periodic boundary conditions, we follow the convention that $w_j\equiv w_{j+N}$ and $w_j^n\equiv w_{j+N}^n$ for all $j\in\mathbb{Z}$ and $n\ge0$. 
The method of lines (MOL) is adopted to first discretize (\ref{eq:prob_ade}) in space and then integrate the resulting system of ordinary differential equations (ODE) along the time ordinate.
In particular, the discrete approximation of the first-derivative in $x$ is denoted $\mathcal{D}_x$ and that of the second-derivative is denoted $\mathcal{D}_{xx}$; hence the ODE reads:
\begin{equation}\label{eq:prob_semi}
  \frac{dw_j}{dt} + \mathcal{D}_xw_j - \nu\mathcal{D}_{xx}w_j = 0\;,\quad\forall j\;.
\end{equation}
In this paper, we consider finite-difference differential operators (FDO) $\mathcal{D}_x$ and $\mathcal{D}_{xx}$ that are constructed with optimal accuracy using a continuous stencil.
In particular, the FDO $\mathcal{D}_x$ is given in general form by:
\begin{equation}\label{eq:prob_dx}
  \mathcal{D}_xw_j = \frac{1}{h}\sum_{k=-l}^ra_kw_{j+k}\;,
\end{equation}
where $l,r\ge0\;, l+r>0$ are the stencils to the left and the right, respectively; for the FDO $\mathcal{D}_{xx}$, we consider those with centered stencils $q>0$:
\begin{equation}\label{eq:prob_dxx}
  \mathcal{D}_{xx}w_j = \frac{1}{h^2}\sum_{k=-q}^qb_kw_{j+k}\;.
\end{equation}
The coefficients $\{a_k\}$ and $\{b_k\}$ are usually determined by accuracy requirement; and they can be uniquely determined if optimal accuracy is desired (see a later section).

Denoting the semi-discrete solution vector by:
\begin{equation}\label{eq:prob_sol}
  \bs{W} = [w_0,\;w_1,\;\cdots,\;w_{N-1}]^t\;,
\end{equation}
where $w_N$ is omitted due to the periodic boundary conditions, the ODE system determined by (\ref{eq:prob_semi}) is written in matrix form:
\begin{equation}\label{eq:prob_semi_mat}
  \frac{d\bs{W}}{dt} = -\frac{1}{h}\bs{A}\bs{W} + \frac{\nu}{h^2}\bs{B}\bs{W}\;.
\end{equation}
Here $\bs{A}$ and $\bs{B}$ are circulant matrices:
\begin{equation}\label{eq:prob_mat_coef}
  \bs{A} = \sum_{k=-l}^ra_k\bs{S}^k\;,\quad 
  \bs{B} = \sum_{k=-q}^qb_k\bs{S}^k\;,
\end{equation}
with $\bs{S}$ being given by:
\begin{equation}\label{eq:prob_mat_s}
  \bs{S} = \left[\begin{array}{ccccc}
    0 & 1 & \cdots & 0 & 0 \\
    0 & 0 & \cdots & 0 & 0 \\ \vspace*{-.24in} \\
    \vdots & \vdots & \ddots & \vdots & \vdots \\
    0 & 0 & \cdots & 0 & 1 \\
    1 & 0 & \cdots & 0 & 0 
  \end{array}\right]\;.
\end{equation}

The stability of the solutions to (\ref{eq:prob_semi_mat}) is determined from that of the coefficient matrix on the right hand side.
Defining $\bs{M} = -\bs{A}+R\bs{B}$, where $R=\nu/h$ is the reciprocal of the cell Reynolds number, a main focus is on the stability of the matrix $\bs{M}$.
It is clear that $0$ is an eigenvalue of $\bs{A}$, $\bs{B}$, and $\bs{M}$, as any consistent discretization preserves constant solutions.
To this end, we adopt the notion of {\it semistable} matrices, see for example~\cite{SLCampbell:1979a,DSBernstein:1995a}.
\begin{definition}\label{def:prob_semistab}
  A matrix $\bs{M}$ is {\it semistable} if any eigenvalue $\lambda$ of $\bs{M}$ satisfies either $\oname{Re}\lambda<0$ or $\lambda=0$ and it is regular.
\end{definition}
An equivalent definition of semistability is that the Jordan normal form of $\bs{M}$ can be arranged as $\left[\begin{array}{cc}\bs{J} & \bs{0} \\ \bs{0} & \bs{0}\end{array}\right]$, where the diagonal elements of $\bs{J}$ all have negative real parts.
It is well known that $\bs{M}$ is semistable if and only if the solution to the ODE system $d\bs{W}/dt = \bs{M}\bs{W}$ has a well defined limit as $t\to\infty$ for any initial data $\bs{W}(0)$.

At the end of this section we compute the FDO coefficients explicitly using Lagrangian interpolation polynomials for optimal accuracy.
The basic idea is that if $\mathcal{D}_x$ is $m$-th order accurate, then for all polynomial $P(x)\in\mathbb{P}^m$, where $\mathbb{P}^m$ denotes the space of polynomials of degree $\le m$, there is:
\begin{equation}\label{eq:prob_ddo_dx_exact}
  \mathcal{D}_xP_j = P'(x_j)\;,
\end{equation}
with $P_k=P(x_k)$ on the left-hand side.
Let the stencil $(l,r)$ of (\ref{eq:prob_dx}) be given, it is well known that the optimal order of accuracy for such a $\mathcal{D}_x$ is $m=l+r$. 
To find out the corresponding coefficients $\{a_k\}$, we define the Lagrangian interpolation polynomials for the points $\{x_{j+k}:\;-l\le k\le r\}$ as $l_k$:
\begin{equation}\label{eq:prob_ddo_dx_lag}
  l_k(x) = \frac{\prod_{-l\le\nu\le r,\,\nu\ne k}(x-x_{j+\nu})}{\prod_{-l\le\nu\le r,\,\nu\ne k}(x_{j+k}-x_{j+\nu})}\in\mathbb{P}^m\;,\quad -l\le k\le r\;,
\end{equation}
and $\{l_k\}$ composes a basis of $\mathbb{P}^m$.
For all $P(x)\in\mathbb{P}^m$, there is:
\begin{equation}\label{eq:prob_ddo_dx_lag_inter}
  P(x) = \sum_{k=-l}^rP(x_k)l_k(x) = \sum_{k=-l}^rP_kl_k(x)\;;
\end{equation}
combining it with (\ref{eq:prob_dx}) and (\ref{eq:prob_ddo_dx_exact}), we obtain:
\begin{equation}\label{eq:prob_ddo_dx_lsys}
  \frac{1}{h}\sum_{k=-l}^ra_kP_k = \sum_{k=-l}^rP_kl_k'(x_j)\;,\quad\forall (P_{-l},\,\cdots,\,P_r)\in\mathbb{R}^{m+1}\;.
\end{equation}
Thus the coefficients are given by:
\begin{equation}\label{eq:prob_ddo_dx}
  a_k = hl_k'(x_j) = \left\{\begin{array}{lcl}
    -\frac{(-1)^k}{k}\frac{l!r!}{(l+k)!(r-k)!} & & \textrm{ if } k\ne0\;, \\ \vspace*{-.1in} \\
    -\sum_{-l\le\nu\le r,\,\nu\ne0}\frac{1}{\nu} & & \textrm{ if } k=0\;.
  \end{array}\right.
\end{equation}
Similarly, given the stencil $q$ the optimal accuracy for $\mathcal{D}_{xx}$ is obtained when 
\begin{equation}\label{eq:prob_ddo_dxx_exact}
  \mathcal{D}_{xx}P_j = P''(x_j)\;,
\end{equation}
for all $P(x)\in\mathbb{P}^m$.
Note that on general grids this order is $2q-1$ whereas on uniform grids (as in this paper), the optimal order is $m=2q$.
Again, defining the Lagrangian interpolation polynomials corresponding to $\{x_{j+k}:\;-q\le k\le q\}$ as:
\begin{equation}\label{eq:prob_ddo_dxx_lag}
  \hat{l}_k(x) = \frac{\prod_{-q\le\nu\le q,\,\nu\ne k}(x-x_{j+\nu})}{\prod_{-q\le\nu\le q,\,\nu\ne k}(x_{j+k}-x_{j+\nu})}\in\mathbb{P}^m\;,\quad -q\le k\le q\;,
\end{equation}
one has:
\begin{equation}\label{eq:prob_ddo_dxx_lsys}
  \frac{1}{h^2}\sum_{k=-q}^qb_kP_k = \sum_{k=-q}^qP_k\hat{l}_k''(x_j)\;,\quad\forall (P_{-q},\,\cdots,\,P_q)\in\mathbb{R}^{m+1}\;.
\end{equation}
It follows immediately that the corresponding coefficients are:
\begin{equation}\label{eq:prob_ddo_dxx}
  b_k = h^2\hat{l}''_k(x_j) = \left\{\begin{array}{lcl}
    -\frac{2(-1)^k}{k^2}\frac{q!q!}{(q+k)!(q-k)!} & & \textrm{ if } k\ne0\;, \\ \vspace*{-.1in} \\
    -\sum_{k=1}^q\frac{2}{k^2} & & \textrm{ if } k=0\;.
  \end{array}\right.
\end{equation}
Later, we shall use these coefficients to prove the general stability result regarding the discretization~(\ref{eq:prob_semi}).

\section{Stability Analysis}
\label{sec:stab}
A benefit of using periodic boundary conditions is the circulant structure of the matrices $\bs{S}$, $\bs{A}$, $\bs{B}$, and $\bs{M}$.
In particular, the eigenvalues of $\bs{S}$ are $s_k = e^{i2k\pi/N}\,,\;k=1,\cdots,N$; hence the matrix $\bs{M}$ is diagonalizable with eigenvalues:
\begin{equation}\label{eq:stab_eigs}
  -\sum_{k=-l}^ra_ks^k + R\sum_{k=-q}^qb_ks^k\;,\quad s = s_1,\,s_2,\,\cdots,\,s_N\;.
\end{equation}
The stability analysis thusly reduces to studying whether the trajectory (fixing $R>0$):
\begin{equation}\label{eq:stab_traj}
  \Lambda(R) \eqdef \left\{\lambda_R(s) = -\sum_{k=-l}^ra_ks^k+R\sum_{k=-q}^qb_ks^k:\;s\in\mathbb{C},\;\abs{s}=1\right\}
\end{equation}
is contained in the left complex plane in the sense of Definition~\ref{def:prob_semistab}.
In addition, we denote by $\Lambda^\ast(R)$ the subset of $\Lambda(R)$ that is defined by excluding $\lambda_R(1)$, which is always $0$ by the consistency of the method.

For convenience, we also consider two extreme situations: when $R=0$, $\Lambda(0)$ is again given by (\ref{eq:stab_traj}), whereas when $R=\infty$, $\Lambda(\infty)$ is defined as:
\begin{equation}\label{eq:stab_traj_adv}
  \Lambda(\infty) \eqdef \left\{\lambda_\infty(s)=\sum_{k=-q}^qb_ks^k:\;s\in\mathbb{C},\;\abs{s}=1\right\}\;.
\end{equation}
Their subsets $\Lambda^\ast(0)$ and $\Lambda^\ast(\infty)$ are defined similarly.
It is fairly easy to see that the eigenvalues of $\bs{M}$ are pairwise sums of that of $-\bs{A}$ and $R\bs{B}$.
Thus if both components are semistable then $\bs{M}$ is likely to be semistable as well. 
This is to be made precise later. 

The full categorization of semistable discretization of the advection equation is accomplished decades ago by Iserles and Strang~\cite{AIserles:1983a} using the theory of order stars and revisited recently using more elementary techniques by Despr\'{e}s~\cite{BDespres:2009a}.
In short, the conclusion is that the optimally accurate $\mathcal{D}_x$ gives rise to a stable discretization if and only if $r\le l\le r+2$.
The case $r=l$ corresponds to a central-difference approximation to $\partial_x$, which is rarely used in practice for solving advection problems with explicit time integrators as the resulting scheme is unconditionally unstable.
In this paper, we suppose $\mathcal{D}_x$ is given by either $l=r+1$ or $l=r+2$, and provide a simple proof that the corresponding $-\bs{A}$ is semistable:
\begin{lemma}\label{lm:stab_dx}
  If $r+1\le l\le r+2$, then $\Lambda^\ast(0)$ is contained in the open left complex plane; hence in combination with the fact that $\Lambda(0) = \Lambda^\ast(0)\cup\{0\}$, one concludes that the corresponding coefficient matrix $-\bs{A}$ is semistable.
\end{lemma}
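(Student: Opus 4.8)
The plan is to reduce the statement to a positivity property of the real part of the symbol of $\bs{A}$ and then to prove an exact closed-form factorization of that real part. Writing $s=e^{i\theta}$ and $A(s)=\sum_{k=-l}^{r}a_ks^k$, the eigenvalue trajectory at $R=0$ is $\lambda_0(s)=-A(s)$, so $\oname{Re}\lambda_0(e^{i\theta})=-\oname{Re}A(e^{i\theta})$. Since $\bs{A}$ is circulant it is diagonalized by the discrete Fourier matrix and is therefore normal, so every eigenvalue is regular; in particular the consistency identity $\sum_k a_k=0$ makes $\lambda_0(1)=0$ a regular eigenvalue. By Definition~\ref{def:prob_semistab}, semistability of $-\bs{A}$ will then follow the moment I establish $\oname{Re}A(e^{i\theta})>0$ for every $\theta\in(0,2\pi)$. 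Because the $a_k$ are real, I would work with $\oname{Re}A(e^{i\theta})=\tfrac12\bigl(A(z)+A(1/z)\bigr)$, $z=e^{i\theta}$, which is an even real trigonometric polynomial of degree $l$ (as $l>r$).

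The core of the argument is the identity
\[
  \oname{Re}A(e^{i\theta})=\frac{2^{\,l-1}\,l!\,r!}{l\,(l+r)!}\,(1-\cos\theta)^l,
\]
which is visibly positive on $(0,2\pi)$. First I would record that optimal accuracy of order $m=l+r$ is equivalent to the moment conditions $\sum_k a_kk^n=\delta_{n1}$ for $0\le n\le m$, which upon Taylor-expanding $\sum_k a_ke^{ik\theta}$ yields $A(e^{i\theta})=i\theta+O(\theta^{m+1})$, i.e. $A(z)=\log z+O\bigl((z-1)^{m+1}\bigr)$ near $z=1$. Setting $G(z)=A(z)+A(1/z)$, the logarithms cancel and $G(z)=O\bigl((z-1)^{m+1}\bigr)$. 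As $G$ is a symmetric Laurent polynomial of degree $l$ whose coefficient of $z^{\pm l}$ is $a_{-l}$, it is a polynomial of degree $l$ in $u:=z+z^{-1}-2=(z-1)^2/z$, say $G=\sum_{j=0}^{l}\gamma_ju^j$ with $\gamma_l=a_{-l}$.

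The decisive step is a parity count. Since $u$ vanishes to order exactly $2$ at $z=1$, the monomial $u^j$ vanishes to order exactly $2j$, so $G=O\bigl((z-1)^{m+1}\bigr)$ forces $\gamma_j=0$ whenever $2j<m+1$. This is precisely where the hypothesis $r+1\le l\le r+2$ is used: for $l=r+1$ one has $m+1=2l$, and for $l=r+2$ one has $m+1=2l-1$, and in both cases the threshold $2j\ge m+1$ isolates the single index $j=l$. Hence $G=\gamma_lu^l=a_{-l}(2\cos\theta-2)^l$, and inserting the explicit value $a_{-l}=\tfrac{(-1)^l}{l}\tfrac{l!\,r!}{(l+r)!}$ obtained from~(\ref{eq:prob_ddo_dx}) and dividing by two produces the displayed formula. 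Note that positivity here is entirely elementary and needs no appeal to the Vietoris inequality, which I expect to become relevant only when the advection and diffusion symbols are combined.

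The main obstacle I anticipate is bookkeeping rather than any hard estimate: one must (i) confirm that the Lagrangian coefficients~(\ref{eq:prob_ddo_dx}) indeed yield exactly the moment conditions, so that the order-of-vanishing statement for $A(z)-\log z$ is rigorous, and (ii) carry out the parity count carefully, since it is sharp. Indeed, the same count explains the boundary of the result: for the central case $l=r$ the symbol is purely imaginary and $\oname{Re}A\equiv0$ (the excluded, unconditionally unstable scheme), whereas for $l\ge r+3$ the power $u^{l-1}$ re-enters $G$ and positivity of $\oname{Re}A$ can fail, in agreement with the Iserles--Strang classification. Once the factorization is in hand, combining $\oname{Re}\lambda_0(e^{i\theta})<0$ for $\theta\ne0$ with the regular zero eigenvalue gives that $\Lambda^\ast(0)$ lies in the open left half-plane and that $-\bs{A}$ is semistable, completing the proof.
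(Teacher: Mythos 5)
Your proposal is correct, and it arrives at the same closed-form factorization as the paper but by a genuinely different mechanism. The paper's proof substitutes the explicit coefficients (\ref{eq:prob_ddo_dx}) into $\oname{Re}\,\lambda_0(e^{i\theta})=-\sum_k a_k\cos k\theta$ and recognizes the resulting sum as a binomial expansion, namely $(-1)^{r+1}\oname{Re}\left[e^{-i(r+1)\theta}\left(1-e^{i\theta}\right)^{2r+2}\right]$ for $l=r+1$ (with an analogous identity for $l=r+2$), which collapses to $-C\left(\sin\tfrac{\theta}{2}\right)^{2l}$ with $C>0$; this requires knowing and summing all the coefficients. You instead obtain the same identity structurally: the moment conditions equivalent to optimal accuracy force $G(z)=A(z)+A(1/z)$ to vanish to order $m+1$ at $z=1$, and since $G$ is a symmetric Laurent polynomial of degree $l$, expanding it in powers of $u=z+z^{-1}-2$ (each $u^j$ vanishing to order exactly $2j$) annihilates every coefficient except $\gamma_l=a_{-l}$ precisely when $r+1\le l\le r+2$ --- so you only need the single coefficient $a_{-l}$ from (\ref{eq:prob_ddo_dx}) to fix the sign and constant. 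I verified your closed form agrees with the paper's in both cases (e.g.\ it reduces to $1-\cos\theta$ for the first-order upwind scheme and $(1-\cos\theta)^2$ for $l=2$, $r=0$), and your parity count is sound, including the sharp boundary cases $2l=m+1$ and $2l=m+2$. What each route buys: the paper's computation is shorter and self-contained; yours isolates exactly where the hypothesis on $l-r$ enters (the vanishing-order count singling out $j=l$) and explains at no extra cost why the conclusion degenerates for $l=r$ and can fail for $l\ge r+3$, consistent with the Iserles--Strang classification. Your treatment of the zero eigenvalue via normality of circulant matrices is a slightly more explicit version of the paper's appeal to the eigenvalue formula $\lambda_0(e^{i2k\pi/N})$, and is equally valid.
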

\begin{proof}
  Let us write $s=e^{i\theta}$, $0<\theta<2\pi$.
  Then following (\ref{eq:prob_ddo_dx}):
  \begin{equation}\label{eq:prob_stab_dx_real}
    \oname{Re}\,\lambda_0(s) = -\sum_{k=-l}^ra_k\cos k\theta = \sum_{-l\le k\le r,\,k\ne0}\frac{1}{k}+\sum_{-l\le k\le r,k\ne0}\frac{(-1)^k}{k}\frac{l!r!}{(l+k)!(r-k)!}\cos k\theta\;.
  \end{equation}
  In the case $l=r+1$, we have:
  \begin{align*}
    \oname{Re}\,\lambda_0(s) 
    =&\ -\frac{1}{r+1} + \frac{(-1)^{-r-1}}{-(r+1)}\frac{(r+1)!r!}{(2r+1)!}\cos(r+1)\theta - \sum_{k=1}^r\frac{(-1)^k2(r+1)!r!}{(r+1+k)!(r+1-k)!}\cos k\theta \\
    =&\ -\frac{(r+1)!r!}{(2r+2)!}\sum_{k=-r-1}^{r+1}\frac{(2r+2)!}{(r+1+k)!(r+1-k)!}(-1)^k\cos k\theta \\
    =&\ -\frac{(r+1)!r!}{(2r+2)!}(-1)^{r+1}\oname{Re}\,\left[e^{-i(r+1)\theta}\left(1-e^{i\theta}\right)^{2r+2}\right] \\
    =&\ -\frac{2^{2(r+1)}(r+1)!r!}{(2r+2)!}\left(\sin\frac{\theta}{2}\right)^{2(r+1)} < 0\;,\quad\forall\ 0 < \theta < 2\pi\;.
  \end{align*}
  Similarly in the case $l=r+2$, there is:
  \begin{displaymath}
    \oname{Re}\,\lambda_0(s)
    = -\frac{2^{2(r+2)}(2r+3)(r+2)!r!}{(2r+4)!}\left(\sin\frac{\theta}{2}\right)^{2(r+2)} < 0\;,\quad\forall\ 0 < \theta < 2\pi\;.
  \end{displaymath}
  Hence in both scenarios, $\Lambda^\ast(0)$ is contained in the open left complex plane. 
  Lastly, since $\lambda_0(1) = -\sum_{k=-l}^ra_k = -h\sum_{k=-l}^rl'_k(x_j) = 0$, the semistability of $-\bs{A}$ follows from the fact that the eigenvalues are given by $\lambda_0(e^{i2k\pi/N})\;, 1\le k\le N$.
\end{proof}

Next we consider the diffusion term.
Early work categorizing stable finite difference discretizations of the diffusion equation includes the work by Iserles on Pad\'{e}-type methods~\cite{AIserles:1985c}.
The technique therein is again to use order stars, which seems an overkill for this work in the context of method of lines.
Therefore, we use the theory of trigonometric polynomials to prove the related stability results regarding the semi-discretization $\mathcal{D}_{xx}$.
Particularly, the following result by Vietoris~\cite{LVietoris:1959a,RAskey:1974a} will be handy.
\begin{lemma}\label{lm:stab_viet}
  If $c_1\ge\cdots\ge c_n>0$ and $(2k)c_{2k}\le(2k-1)c_{2k-1}$ for all $k\ge1$, then:
  \begin{displaymath}
    \sum_{k=1}^nc_k\sin\,k\theta > 0\;,\quad\forall\ 0<\theta<\pi\;.
  \end{displaymath}
\end{lemma}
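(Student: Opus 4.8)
The plan is to reduce the general weighted sine sum to a single canonical (extremal) case by Abel summation, and then to settle that canonical case directly. Introduce the Vietoris sequence $a_1=1$ and $a_{2k}=a_{2k+1}=\binom{2k}{k}4^{-k}$ for $k\ge1$, and write $\sigma_m(\theta)=\sum_{k=1}^m a_k\sin k\theta$ for its partial sums. The two hypotheses on $\{c_k\}$ are tailored precisely to this sequence: a short computation gives the ratios $a_{2k}/a_{2k-1}=(2k-1)/(2k)$ and $a_{2k+1}/a_{2k}=1$, so that the requirement $c_k/a_k\ge c_{k+1}/a_{k+1}$ reads $2k\,c_{2k}\le(2k-1)c_{2k-1}$ on the odd-to-even steps and $c_{2k+1}\le c_{2k}$ on the even-to-odd steps. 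These are exactly the second hypothesis and the monotonicity hypothesis, respectively.

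First I would carry out the reduction. Setting $d_k=c_k/a_k>0$, the observation above shows $d_1\ge d_2\ge\cdots\ge d_n>0$. Abel summation then yields
\[
  \sum_{k=1}^n c_k\sin k\theta=\sum_{k=1}^n d_k\,(a_k\sin k\theta)=\sum_{m=1}^{n-1}(d_m-d_{m+1})\,\sigma_m(\theta)+d_n\,\sigma_n(\theta),
\]
which exhibits the target sum as a nonnegative combination of the canonical partial sums $\sigma_m$, with a strictly positive weight $d_n$ on $\sigma_n$. Hence, once the canonical positivity $\sigma_m(\theta)>0$ on $(0,\pi)$ is available, strict positivity of the general sum follows at once.

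The remaining and essential step---and the main obstacle---is to prove $\sigma_m(\theta)>0$ for every $m\ge1$ and every $0<\theta<\pi$. The generating function $\sum_{k\ge0}a_kz^k=\sqrt{(1+z)/(1-z)}$ shows, upon evaluating (in the Abel sense) at $z=e^{i\theta}$, that the fully summed series equals $\sqrt{\cot(\theta/2)}\,e^{i\pi/4}$, whose real and imaginary parts are positive on $(0,\pi)$; this explains why positivity should hold but gives no control on the partial sums. To handle the partial sums I would use the Wallis/beta integral representation $\binom{2k}{k}4^{-k}=\tfrac{2}{\pi}\int_0^{\pi/2}\cos^{2k}\phi\,d\phi$, together with the pairing $a_{2k}(\sin2k\theta+\sin(2k+1)\theta)=2\cos(\theta/2)\,a_{2k}\sin\!\big((2k+\tfrac12)\theta\big)$ afforded by $a_{2k}=a_{2k+1}$, to rewrite $\sigma_m$ as the integral of an explicitly summable, geometric-type kernel whose sign can then be read off.

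An alternative that I would keep in reserve is a simultaneous induction on $m$ propagating the positivity of the sine partial sums $\sigma_m$ and of the companion cosine partial sums $\sum_{k=0}^m a_k\cos k\theta$, which is the route taken in the classical treatments of Vietoris~\cite{LVietoris:1959a} and Askey--Steinig~\cite{RAskey:1974a}. I expect this canonical positivity to be the delicate part: near $\theta=\pi$ the terms $a_k\sin k\theta$ alternate in sign, so the cancellation has to be controlled uniformly in $m$, whereas the reduction of the first two paragraphs is by comparison routine.
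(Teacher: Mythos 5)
First, note that the paper does not prove this lemma at all: it is quoted as a known result of Vietoris, with Askey--Steinig cited alongside, and is used as a black box in the proof of Lemma~\ref{lm:stab_dxx}. So there is no internal proof to compare yours against; the only question is whether your blind attempt stands on its own as a proof.

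It does not, because of one genuine gap. Your reduction is correct, and it is in fact the classical one: the ratio computation $a_{2k}/a_{2k-1}=(2k-1)/(2k)$ and $a_{2k+1}/a_{2k}=1$ shows that the two hypotheses are exactly the monotonicity of $d_k=c_k/a_k$, and the Abel summation identity exhibits $\sum_{k=1}^n c_k\sin k\theta$ as a nonnegative combination of the partial sums $\sigma_m$ with strictly positive weight $d_n$ on $\sigma_n$. But the conclusion then hinges entirely on $\sigma_m(\theta)>0$ for all $m\ge1$ and $0<\theta<\pi$, which is the actual content of Vietoris's theorem, and this you do not prove. Your integral-representation sketch stops exactly where the difficulty begins: after inserting $\binom{2k}{k}4^{-k}=\tfrac{2}{\pi}\int_0^{\pi/2}\cos^{2k}\phi\,d\phi$ and the pairing identity, the inner sum $\sum_{k=1}^m(\cos^2\phi)^k\sin\bigl((2k+\tfrac12)\theta\bigr)$ is a partial geometric-type sum whose sign cannot simply be ``read off'': its terms oscillate in sign (most severely near $\theta=\pi$, as you yourself note), so one must compute the full sum, estimate the tail, and obtain a bound uniform in $m$ --- which is precisely the work done in the classical proofs. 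The alternative you ``keep in reserve'' (simultaneous induction on the sine and cosine partial sums) is a pointer to the literature, not an argument. As written, your proposal establishes the routine half of the lemma and defers the essential half; to fix it you must either carry out the kernel estimate in full, or do what the paper does and cite Vietoris/Askey--Steinig --- in which case note that the Abel-summation reduction you present is also already contained in Askey--Steinig, so the entire lemma reduces to a citation.
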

\noindent
Note that a sufficient but more convenient condition to verify is $kc_k\le(k-1)c_{k-1},\ \forall k\ge2$.

\begin{lemma}\label{lm:stab_dxx}
  Let $\mathcal{D}_{xx}$ with stencil $q>0$ be constructed according to (\ref{eq:prob_ddo_dxx}), then the trajectory $\Lambda^\ast(\infty)$ is contained in the open left complex plane, and $\Lambda(\infty) = \Lambda^\ast(\infty)\cup\{0\}$.
\end{lemma}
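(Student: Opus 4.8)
The plan is to use the centered-stencil symmetry of (\ref{eq:prob_ddo_dxx}), namely $b_k=b_{-k}$. Writing $s=e^{i\theta}$ with $0\le\theta<2\pi$, the imaginary parts cancel in pairs and the symbol is purely real,
\begin{equation*}
  \lambda_\infty(e^{i\theta}) = \sum_{k=-q}^q b_k\cos k\theta = b_0 + 2\sum_{k=1}^q b_k\cos k\theta .
\end{equation*}
Hence the statement ``$\Lambda^\ast(\infty)$ lies in the open left complex plane'' collapses to the single real inequality $\lambda_\infty(e^{i\theta})<0$ for $0<\theta<2\pi$. I would first record that $\lambda_\infty(1)=\sum_k b_k = h^2\big(\sum_k \hat{l}_k\big)''(x_j)=0$, since the Lagrange basis is a partition of unity; this supplies the isolated eigenvalue $0$, the decomposition $\Lambda(\infty)=\Lambda^\ast(\infty)\cup\{0\}$, and (once strict negativity is proved) shows $\theta=0$ is the only zero of the symbol, so $\Lambda^\ast(\infty)$ lands on the negative real axis.

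Set $F(\theta)=-\lambda_\infty(e^{i\theta})$; the goal is $F>0$ on $(0,2\pi)$. Since $F$ is even and $2\pi$-periodic, it is symmetric about $\theta=\pi$, so it suffices to treat $0<\theta\le\pi$; and since $F(0)=0$, it is enough to show $F'>0$ on $(0,\pi)$ and integrate. This is where Lemma~\ref{lm:stab_viet} should enter. Differentiating gives
\begin{equation*}
  F'(\theta) = \sum_{k=1}^q (2k\,b_k)\sin k\theta ,
\end{equation*}
but the coefficients $2k\,b_k$ alternate in sign (because $b_k\propto(-1)^{k+1}$), so Vietoris does not apply directly. The decisive step is the substitution $\theta=\pi-\psi$, under which $\sin k\theta=(-1)^{k+1}\sin k\psi$; this cancels the alternation exactly and yields
\begin{equation*}
  F'(\pi-\psi) = \sum_{k=1}^q \tilde c_k\sin k\psi , \qquad \tilde c_k=\frac{4}{k}\frac{(q!)^2}{(q+k)!(q-k)!}>0 .
\end{equation*}

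I would then verify the hypotheses of Lemma~\ref{lm:stab_viet} through the convenient sufficient condition $k\tilde c_k\le(k-1)\tilde c_{k-1}$ for $2\le k\le q$. Since $k\tilde c_k=4(q!)^2/[(q+k)!(q-k)!]$, this reduces after cancellation to $(q-k+1)\le(q+k)$, i.e.\ $k\ge\tfrac12$, which holds throughout. Vietoris then gives $F'(\pi-\psi)>0$ for $0<\psi<\pi$, hence $F'>0$ on $(0,\pi)$ and $F>0$ there, finishing the argument. I expect the sign-flipping substitution to be the only genuinely delicate point, since the monotonicity check degenerates to a one-line factorial inequality; the conceptual obstacle is simply that Vietoris governs sine sums while the symbol is a cosine sum, and bridging that gap is what the differentiation and the $\pi-\psi$ reflection accomplish.

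As an independent cross-check I would sum the series outright. The optimal $(2q+1)$-point symbol is the degree-$q$ truncation in $\sigma=4\sin^2(\theta/2)$ of the expansion $-\theta^2=-2\sum_{n\ge1}\sigma^n/\big(n^2\binom{2n}{n}\big)$, so that $\lambda_\infty(e^{i\theta})=-2\sum_{n=1}^q\sigma^n/\big(n^2\binom{2n}{n}\big)$, which is manifestly negative for $\theta\neq0$. This closed form both confirms the result and, through its leading term $-\sigma=-4\sin^2(\theta/2)\approx-\theta^2$, pins down the behaviour of $\Lambda(\infty)$ near the origin that will be needed later.
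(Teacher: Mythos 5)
Your proof is correct and follows essentially the same route as the paper's: both reduce the claim to positivity of a sine sum by exploiting the reflection $\theta\mapsto\pi-\theta$ to cancel the alternating signs of the $b_k$, apply Vietoris's lemma via the same factorial inequality $q-k+1\le q+k$, and anchor the integration at the single zero of the symbol (your $F(0)=0$ is the paper's $f(\pi)=0$ after reflection, so the two arguments are identical up to reparametrization). Your closed-form cross-check $\lambda_\infty(e^{i\theta})=-2\sum_{n=1}^q\sigma^n/\bigl(n^2\binom{2n}{n}\bigr)$ is a correct and pleasant bonus not in the paper, though as written it would need a justification that the truncated series coincides with the optimal-accuracy coefficients~(\ref{eq:prob_ddo_dxx}) before it could replace the main argument.
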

\begin{proof}
  To show $\Lambda(\infty)$ intersects the imaginary axis at $s=1$ is easy:
  \begin{displaymath}
    \lambda_\infty(1) = \sum_{k=-q}^qb_k1^k = \sum_{k=-q}^q\hat{l}_k''(x_j) = 0\;,
  \end{displaymath}
  where we used the fact that $\sum_{k=-q}^q\hat{l}_k(x)\equiv1$.

  Now let us focus on $\Lambda^\ast(\infty)$ and write $s=e^{i\theta}$, $0<\theta<2\pi$. 
  By direct computation and the symmetry $b_k=b_{-k}$, which is clearly seen from (\ref{eq:prob_ddo_dxx}), we have:
  \begin{displaymath}
    \lambda_\infty(s) = b_0 + \sum_{k=1}^qb_k(s^k+s^{-k}) = b_0 + 2\sum_{k=1}^qb_k\cos\,k\theta\in\mathbb{R}\;;
  \end{displaymath}
  and the purpose is to show the right-hand side is negative for all $0<\theta<2\pi$.
  To this end, we distinguish among three cases.

  \smallskip

  {\bf Case 1: $\theta=\pi$.}
  Now we have $s=-1$ and:
  \begin{displaymath}
    \lambda_\infty(-1) = b_0 + 2\sum_{k=1}^q(-1)^kb_k = -\sum_{k=1}^q\frac{2}{k^2}-\sum_{k=1}^q\frac{4}{k^2}\frac{q!q!}{(q+k)!(q-k)!} < 0\;.
  \end{displaymath}

  \smallskip

  {\bf Case 2: $\pi<\theta<2\pi$.}
  By defining $\phi = 2\pi-\theta\in(0,\;\pi)$, there is:
  \begin{displaymath}
    \lambda_\infty(s) = b_0+2\sum_{k=1}^qb_k\cos\,k(2\pi-\phi) = b_0+2\sum_{k=1}^qb_k\cos\,k\phi\;.
  \end{displaymath}
  Hence the situation reduces to the next one.

  \smallskip

  {\bf Case 3: $0<\theta<\pi$.}
  Proving $-b_0-2\sum_{k=1}^qb_k\cos\,k\theta>0$ is a topic in positive trigonometric polynomials; and a difficulty here is $b_k$ has alternating signs.
  To get around, let us change the varible $\theta\mapsto\pi-\theta$, so that the problem equivalently converts to show for all $0<\theta<\pi$:
  \begin{equation}\label{eq:stab_dxx_pos}
    -b_0-2\sum_{k=1}^qb_k\cos\,k(\pi-\theta) = \abs{b_0} + 2\sum_{k=1}^q\abs{b_k}\cos\,k\theta > 0\;.
  \end{equation}
  Let us define the right hand side as $f(\theta)$, then we have $f(\pi)=0$ (i.e., $\lambda_\infty(1)=0$) and:
  \begin{equation}\label{eq:stab_dxx_dec}
    f'(\theta) = -2\sum_{k=1}^qk\abs{b_k}\sin\,k\theta = -\sum_{k=1}^q\frac{4}{k}\frac{q!q!}{(q+k)!(q-k)!}\sin\,k\theta\;.
  \end{equation}
  If we can show $f'(\theta)<0$ for all $0<\theta<\pi$, then combining with $f(\pi)=0$ it follows immediately that $f(\theta)>0$ on $(0,\;\pi)$; whereas for the former, we just need to verify the condition below Lemma~\ref{lm:stab_viet}, i.e., for all $k\ge2$:
  \begin{displaymath}
    k\times\frac{4}{k}\frac{q!q!}{(q+k)!(q-k)!} \le (k-1)\times\frac{4}{k-1}\frac{q!q!}{(q+k-1)!(q-k+1)!}
    \ \Leftrightarrow\ 
    q-k+1\le q+k\;,
  \end{displaymath}
  which clearly holds and thusly ends the proof.
\end{proof}

To this end, we obtain the following stability theorem for linear ADEs:
\begin{theorem}\label{thm:stab_ade}
  Let $\mathcal{D}_x$ and $\mathcal{D}_{xx}$ be of optimal accuracy; and for the former there is $l=r+1$ or $l=r+2$; then the corresponding coefficient matrix $\bs{M}$ is semistable.
\end{theorem}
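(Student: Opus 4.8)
The plan is to combine the two preceding lemmas directly, exploiting that $\bs{A}$, $\bs{B}$, and hence $\bs{M}=-\bs{A}+R\bs{B}$ are simultaneously diagonalized by the Fourier modes of the shift $\bs{S}$. First I would record that the eigenvalues of $\bs{M}$ are exactly the numbers $\lambda_R(s_k)$ with $s_k=e^{i2k\pi/N}$, $k=1,\dots,N$, and that these decompose as $\lambda_R(s)=\lambda_0(s)+R\,\lambda_\infty(s)$, where $\lambda_0(s)=-\sum_{k=-l}^r a_k s^k$ is the advective part (the $R=0$ limit of~(\ref{eq:stab_traj})) and $\lambda_\infty(s)=\sum_{k=-q}^q b_k s^k$ is the diffusive part~(\ref{eq:stab_traj_adv}). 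This is the only structural observation needed, and it holds because all four circulant matrices share the eigenvectors of $\bs{S}$.

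The heart of the argument is a real-part estimate. Because $\bs{B}$ has the symmetric coefficients $b_k=b_{-k}$, Lemma~\ref{lm:stab_dxx} already established that $\lambda_\infty(s)$ is real; consequently $\oname{Re}\,\lambda_R(s)=\oname{Re}\,\lambda_0(s)+R\,\lambda_\infty(s)$ for every $s$ on the unit circle. I would then split according to whether $s=1$. For $s\ne1$, i.e.\ $0<\theta<2\pi$, Lemma~\ref{lm:stab_dx} gives $\oname{Re}\,\lambda_0(s)<0$ and Lemma~\ref{lm:stab_dxx} gives $\lambda_\infty(s)<0$; since $R>0$ the two contributions carry the same sign, so $\oname{Re}\,\lambda_R(s)<0$. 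At $s=1$ both pieces vanish by consistency, whence $\lambda_R(1)=0$.

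To conclude semistability in the sense of Definition~\ref{def:prob_semistab}, I would argue that $0$ is a regular eigenvalue. Among the $N$ mesh values $s_k$ exactly one equals $1$ (namely $k=N$), and the previous step shows every other eigenvalue lies strictly in the open left half-plane; hence $0$ is a simple eigenvalue of $\bs{M}$. Moreover $\bs{M}$ is circulant and therefore diagonalizable by the discrete Fourier matrix, so the algebraic and geometric multiplicities of $0$ coincide and the eigenvalue is regular. This places every eigenvalue of $\bs{M}$ in the set permitted by Definition~\ref{def:prob_semistab}, which is precisely semistability.

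I do not anticipate a genuine obstacle: essentially all the analytic work has been front-loaded into Lemmas~\ref{lm:stab_dx} and~\ref{lm:stab_dxx}, and the theorem is the clean payoff of superposing them. The only points requiring care are bookkeeping ones — verifying that $\lambda_\infty(s)$ is real, so that the diffusive term contributes purely to the real part rather than rotating eigenvalues back toward the imaginary axis, and noting that strictness in either lemma alone already forces $\oname{Re}\,\lambda_R(s)<0$, so no borderline cancellation can occur for any finite $R>0$. The regularity of the zero eigenvalue is immediate from the circulant (hence normal) structure and needs no separate computation.
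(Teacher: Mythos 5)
Your proposal is correct and follows essentially the same route as the paper's proof: decompose each eigenvalue as $\lambda_R(s_k)=\lambda_0(s_k)+R\,\lambda_\infty(s_k)$ via the shared Fourier eigenvectors of $\bs{S}$, apply Lemma~\ref{lm:stab_dx} and Lemma~\ref{lm:stab_dxx} to place both parts in the closed left half-plane with strict negativity off $s=1$, and use $R>0$ to conclude. Your additional remarks on the reality of $\lambda_\infty$ and on the regularity of the zero eigenvalue (via diagonalizability of circulant matrices) only make explicit what the paper leaves implicit from its earlier discussion of the circulant structure.
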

\begin{proof}
  The eigenvalues of $\bs{M}$ are given by:
  \begin{displaymath}
    \lambda_R(e^{i2k\pi/N}) = \lambda_0(e^{i2k\pi/N})+R\lambda_\infty(e^{i2k\pi/N})\;,\quad k=1,\cdots,N\;.
  \end{displaymath}
  By Lemma~\ref{lm:stab_dx}, $\oname{Re}\,\lambda_0(e^{i2k\pi/N})<0$ for all $1\le k\le N-1$ and $\lambda_0(1)=0$; and by Lemma~\ref{lm:stab_dxx}, $\oname{Re}\,\lambda_\infty(e^{i2k\pi/N})<0$ for all $1\le k\le N-1$ and $\lambda_\infty(1)=0$.
  Hence the desired result comes from the fact that $R>0$.
\end{proof}
{\it In the remainder of the paper, we only consider $\mathcal{D}_x$ and $\mathcal{D}_{xx}$ that satisfy the requirements of this theorem} -- hence by $\mathcal{D}_x$ we mean an optimally accurate FDO with stencil $l=r+1$ or $l=r+2$, even if such a construction is not explicitly stated\footnote{Similarly, in the case of a left-going wave, $\mathcal{D}_x$ refers to an optimally accurate FDO with $r=l+1$ or $r=l+2$, see Section~\ref{sec:wave}.}.

\smallskip

Lastly, we establish some results that will be useful in the stability analysis of fully-discretized methods in the next section.
The first one concerns the asymptotic behavior of the eigenvalue trajectory $\Lambda(R)$ near $s=1$.
\begin{theorem}\label{thm:stab_asym}
  Denote $x_R(\theta) = \oname{Re}\,\lambda_R(e^{i\theta})$ and $y_R(\theta) = \oname{Im}\,\lambda_R(e^{i\theta})$.
  Then as $\theta\to0$:
  \begin{enumerate}[(i)]
    \item There exists a $C_1>0$ that is determined by $\mathcal{D}_x$, such that $x_0=-C_1y_0^{2l}+O(y_0^{2l+1})$.
    \item If $R>0$, there exists a $C_2>0$ that is determined by both $\mathcal{D}_x$ and $\mathcal{D}_{xx}$, such that $x_R=-C_2y_R^{2l}+O(y_R^{2l+2})+R\left(-y_R^2+O(y_R^{\min(2r+4,2q+2)})\right)$.
  \end{enumerate}
\end{theorem}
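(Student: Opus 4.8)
The plan is to reduce everything to two structural observations. First, the diffusion symbol $\lambda_\infty(e^{i\theta})=\sum_{k=-q}^{q}b_ke^{ik\theta}$ is \emph{purely real}, since $b_k=b_{-k}$; therefore adding $R\lambda_\infty$ leaves the imaginary part untouched, and $y_R(\theta)=y_0(\theta)$ for every $R\ge0$. Second, Lemma~\ref{lm:stab_dx} already supplies the real part of the advection symbol in closed form, $x_0(\theta)=-\tilde C\,(\sin(\theta/2))^{2l}$ with an explicit constant $\tilde C>0$ (namely $2^{2l}(r+1)!\,r!/(2r+2)!$ when $l=r+1$ and $2^{2l}(2r+3)(r+2)!\,r!/(2r+4)!$ when $l=r+2$). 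Because of the first observation, once (i) is established, (ii) will follow by re-expressing $\lambda_\infty$ as a function of $y_R=y_0$ and adding $R\lambda_\infty$ to the expansion of $x_0$.

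For part (i), I would note that $x_0(\theta)$ is an even function of $\theta$ whereas $y_0(\theta)=-\oname{Im}\sum_k a_ke^{ik\theta}$ is odd with $y_0'(0)=-1\ne0$. Hence $\theta\mapsto y_0$ is a local analytic diffeomorphism near the origin, and $x_0$, viewed as a function of $y_0$, is again even; its Taylor series therefore contains only even powers of $y_0$. Substituting $\sin(\theta/2)=\theta/2+O(\theta^3)$ and $\theta=-y_0+O(y_0^{3})$ into the closed form pins down the leading coefficient and gives $x_0=-C_1y_0^{2l}+O(y_0^{2l+2})$ with $C_1=\tilde C\,2^{-2l}>0$, which is sharper than the claimed $O(y_0^{2l+1})$.

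For part (ii), I would first use the orders of accuracy to control both symbols near $\theta=0$. The $(l+r)$-th order accuracy of $\mathcal{D}_x$ gives $\sum_k a_ke^{ik\theta}=i\theta+O(\theta^{l+r+1})$, and a parity argument sharpens this: the real part of the error is even and, by Lemma~\ref{lm:stab_dx}, starts only at order $2l$, while its imaginary part is odd, so the lowest odd power correcting $y_0(\theta)=-\theta$ is $\theta^{2r+3}$ in both the $l=r+1$ and $l=r+2$ configurations. Inverting yields $\theta=-y_0+O(y_0^{2r+3})$, hence $\theta^2=y_R^2+O(y_R^{2r+4})$. Next, the $2q$-th order accuracy of $\mathcal{D}_{xx}$ gives $\lambda_\infty(\theta)=-\theta^2+O(\theta^{2q+2})$. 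Combining the two expansions produces $\lambda_\infty=-y_R^2+O(y_R^{\min(2r+4,\,2q+2)})$, and adding $R\lambda_\infty$ to the result of part (i) (with $C_2=C_1$ and $y_R=y_0$) gives the stated formula.

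The main obstacle will be the order bookkeeping in the parity argument, specifically confirming that the first correction to $y_0=-\theta$ occurs exactly at $\theta^{2r+3}$ in both admissible stencils, so that the conversion $\theta^2=y_R^2+O(y_R^{2r+4})$ is sharp enough to yield the $2r+4$ inside $\min(2r+4,2q+2)$. This requires separating the case $l=r+1$, where $l+r+1=2l$ is even and the leading symbol error is real, from $l=r+2$, where $l+r+1=2r+3$ is odd and the leading error is imaginary. Everything else amounts to routine Taylor expansion and composition of power series.
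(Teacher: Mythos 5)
Your proposal is correct and follows essentially the same route as the paper's proof: both rest on the observations that $\lambda_\infty$ is real (so $y_R=y_0$), that Lemma~\ref{lm:stab_dx} supplies the closed form $x_0=-c\left(\sin\frac{\theta}{2}\right)^{2l}$, and that the accuracy (moment) conditions give $y_0=-\theta+O(\theta^{2r+3})$ and $x_\infty=-\theta^2+O(\theta^{2q+2})$, after which powers of $\theta$ are converted into powers of $y_R$. Your explicit even/odd parity bookkeeping is just a spelled-out version of the paper's implicit case distinction on whether $(i\theta)^{l+r+1}$ is real or imaginary, and the sharper $O(y_0^{2l+2})$ you obtain in part (i) is exactly what the paper's proof achieves as well.
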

\begin{proof}
  First of all, noticing that $\lambda_R(1)=0$, we have $x_R(0)=y_R(0)=0$ and the big-O terms makes sense.
  Now let us assume $R=0$, then by the construction of $\mathcal{D}_x$ there is:
  \begin{displaymath}
    \sum_{k=-l}^rk^ma_k = m!\delta_{1m}\;,\quad m=0,1,\cdots,l+r\;,
  \end{displaymath}
  where $\delta_{1m}$ is the Kronecker symbol that equals $1$ if $m=1$ and $0$ otherwise; and:
  \begin{displaymath}
    \sum_{k=-l}^rk^{l+r+1}a_k = (l+r+1)!c_1\;,\quad c_1\ne0\;.
  \end{displaymath}
  To this end on the one hand:
  \begin{displaymath}
    x_0(\theta)+iy_0(\theta) = -\sum_{k=-l}^ra_ke^{ik\theta} = -\sum_{k=-l}^ra_k\sum_{m=0}^\infty\frac{k^m}{m!}(i\theta)^m = -i\theta - c_1(i\theta)^{l+r+1} + O(\theta^{l+r+2})\;,
  \end{displaymath}
  and it follows that:
  \begin{displaymath}
    y_0(\theta) = -\theta + O(\theta^{2r+3})\quad\Rightarrow\quad
    \abs{y_0}^{2l} = \theta^{2l} + O(\theta^{2(l+r+1)})\ \textrm{ and }\ 
    \abs{y_0}^{2l+2} = \theta^{2l+2} + O(\theta^{2(l+r+2)})\;.
  \end{displaymath}
  On the other hand by Lemma~\ref{lm:stab_dx}:
  \begin{displaymath}
    x_0(\theta) = -c_2\left(\sin\frac{\theta}{2}\right)^{2l} = -c_2\left(\frac{\theta}{2}\right)^{2l}+O(\theta^{2l+2})\;.
  \end{displaymath}
  where $c_2>0$ depends only on $l$ and $r$.
  Combining these results, one has:
  \begin{displaymath}
    x_0(\theta) = - \frac{c_2}{2^{2l}}\abs{y_0(\theta)}^{2l} + O(\abs{y_0(\theta)}^{2l+2})\;,
  \end{displaymath}
  which completes the proof of the first part with $C_1=c_2/2^{2l}$.

  \medskip

  Now we suppose $R>0$; following Lamma~\ref{lm:stab_dxx}, $\lambda_\infty(s)$ is real and thusly:
  \begin{displaymath}
    x_R(\theta) = x_0(\theta) + R x_\infty(\theta)\quad\textrm{ and }\quad
    y_R(\theta) = y_0(\theta)\;.
  \end{displaymath}
  Because $\mathcal{D}_{xx}$ is optimally accurate and the coefficients $b_k$ are symmetric, one has:
  \begin{displaymath}
    \sum_{k=-q}^qk^mb_k = m!\delta_{2m}\;,\quad
    m=0,1,\cdots,2q+1\;,
  \end{displaymath}
  where $\delta_{2m}$ is the Kronecker delta symbol that equals $1$ when $m=2$ and $0$ otherwise; and:
  \begin{displaymath}
    \sum_{k=-q}^qk^{2q+2}b_k = (2q+2)!c_3\;,\quad c_3\ne0\;.
  \end{displaymath}
  To this end:
  \begin{displaymath}
    x_\infty(\theta) = \sum_{k=-q}^qb_ke^{ik\theta} = \sum_{k=-q}^qb_k\sum_{m=0}^\infty\frac{k^m}{m!}(i\theta)^m = -\theta^2 + (-1)^{q+1}c_3\theta^{2q+2} + O(\theta^{2q+4})\;.
  \end{displaymath}
  Combining with the estimates in the previous case, we obtain:
  \begin{align*}
    x_R(\theta) = -C_1\theta^{2l}+O(\theta^{2l+2}) + R\left(-\theta^2+O(\theta^{2q+2})\right)\;,\quad
    y_R(\theta) = -\theta + O(\theta^{2r+3})\;,
  \end{align*}
  and it follows immediately that:
  \begin{displaymath}
    x_R(\theta) = -C_1\abs{y_R(\theta)}^{2l}+O(\abs{y_R(\theta)}^{2l+2})+R\left(-\abs{y_R(\theta)}^2+O(\abs{y_R(\theta)}^{\min(2r+4,2q+2)})\right)\;,
  \end{displaymath}
  which completes the proof.
\end{proof}

The second result concerns a global bound on the trajectory $\Lambda(R)$.
\begin{theorem}\label{thm:stab_bound}
  There exists a positive number $L$ that only depends on $\mathcal{D}_x$ and $\mathcal{D}_{xx}$, such that for all $\theta\in[-\pi,\pi]$:
  \begin{equation}\label{eq:stab_bound}
    x_R(\theta)\le-RL(y_R(\theta))^2\;,
  \end{equation}
  where $x_R(\theta)$ and $y_R(\theta)$ are defined the same way as in Theorem~\ref{thm:stab_asym}.
\end{theorem}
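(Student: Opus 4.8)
The plan is to exploit the decomposition recorded in the proof of Theorem~\ref{thm:stab_asym}, namely $x_R(\theta) = x_0(\theta) + R\,x_\infty(\theta)$ and $y_R(\theta) = y_0(\theta)$, together with the fact from Lemma~\ref{lm:stab_dx} that $x_0(\theta) \le 0$ for every $\theta \in [-\pi,\pi]$. Since $x_0 \le 0$ and $R > 0$, it suffices to prove the stronger, $R$-free estimate $x_\infty(\theta) \le -L\,y_0(\theta)^2$ for all $\theta$, because multiplying this by $R$ and adding the nonpositive term $x_0(\theta)$ then yields (\ref{eq:stab_bound}). This reduction is the key simplification: it replaces a family of inequalities indexed by $R$ with a single statement about the two fixed trigonometric polynomials $x_\infty$ and $y_0$, so that the resulting constant $L$ automatically depends only on $\mathcal{D}_x$ and $\mathcal{D}_{xx}$.

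First I would recast the reduced inequality as a lower bound on the quotient $g(\theta) = -x_\infty(\theta)/y_0(\theta)^2$, defined on the set where $y_0(\theta) \ne 0$. By Lemma~\ref{lm:stab_dxx} we have $x_\infty(\theta) < 0$ for all $\theta \ne 0$, so $g > 0$ on its domain, and the whole problem becomes showing $\inf g =: L > 0$. At the (necessarily isolated) zeros of $y_0$ the target inequality $-x_\infty \ge L\,y_0^2$ holds trivially, since its right-hand side vanishes there while $-x_\infty \ge 0$; hence it is genuinely enough to bound $g$ below on $\{y_0 \ne 0\}$.

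Next I would pin down the boundary behavior of $g$ in order to run a compactness argument on the compact interval $[-\pi,\pi]$. Near $\theta = 0$, the leading-order expansions used in Theorem~\ref{thm:stab_asym}, namely $x_\infty(\theta) = -\theta^2 + O(\theta^{2q+2})$ and $y_0(\theta) = -\theta + O(\theta^{2r+3})$, give $g(\theta) \to 1$; near any other zero $\theta_0$ of $y_0$ the numerator $-x_\infty(\theta_0)$ is a strictly positive constant while $y_0^2 \to 0$, so $g(\theta) \to +\infty$. With these limits in hand I would argue by contradiction: were there no positive lower bound, there would exist $\theta_n$ with $g(\theta_n) \to 0$, and passing to a convergent subsequence $\theta_n \to \theta^\ast$ forces one of three exhaustive cases -- $y_0(\theta^\ast) \ne 0$, $\theta^\ast = 0$, or $\theta^\ast$ a nonzero zero of $y_0$ -- each of which contradicts $g(\theta_n) \to 0$ via the boundary behavior just described together with the strict negativity $x_\infty < 0$ off the origin from Lemma~\ref{lm:stab_dxx}.

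The main obstacle is the joint vanishing of $x_\infty$ and $y_0$ at $\theta = 0$: this is the unique point where both the candidate numerator and denominator tend to zero, so the uniform bound cannot be obtained from sign considerations alone and must instead be extracted from the matched leading orders $-x_\infty \sim \theta^2 \sim y_0^2$ furnished by Theorem~\ref{thm:stab_asym}. Everywhere else the estimate is comparatively soft: away from the origin the strict negativity of $x_\infty$ keeps $g$ bounded away from zero, and the remaining zeros of $y_0$ are in fact favorable, since $g$ blows up there rather than degenerating.
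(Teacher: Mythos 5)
Your proposal is correct. The opening reduction is exactly the one the paper makes: since $x_0\le 0$ by Lemma~\ref{lm:stab_dx} and $R>0$, write $x_R=x_0+Rx_\infty\le Rx_\infty$ and reduce everything to the single $R$-free inequality $x_\infty\le -L\,y_0^2$. Where you diverge is in how that inequality is established. The paper factors it through $\theta^2$: it proves $y_0^2\le L_1\theta^2$ by explicitly computing $y_0'(\theta)$ (reusing the closed form from the proof of Lemma~\ref{lm:stab_dx}) and applying the mean value theorem, and separately proves $x_\infty\le -L_2\theta^2$ by observing that $f(\theta)=-x_\infty(\theta)/\theta^2$ extends continuously to $[0,\pi]$ with $f(0)=1$ and is positive there, hence attains a positive minimum; then $L=L_2/L_1$. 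You instead work directly with the single quotient $g=-x_\infty/y_0^2$ and run a compactness/contradiction argument, treating the zeros of $y_0$ as separate (favorable) cases. Both are sound; the trade-off is that the paper's route never has to worry about zeros of the denominator other than $\theta=0$ (since $\theta^2$ vanishes nowhere else, whereas $y_0$ always vanishes at $\theta=\pm\pi$ and possibly elsewhere), and its Part~1 yields an explicit constant $L_1$, while your route is purely existential but skips the derivative computation entirely and handles the extra denominator zeros cleanly, since $g$ blows up rather than degenerates there. Your treatment of the one genuinely delicate point --- the joint vanishing of numerator and denominator at $\theta=0$, resolved by the matched expansions $-x_\infty\sim\theta^2\sim y_0^2$ from Theorem~\ref{thm:stab_asym} --- coincides with the mechanism behind the paper's $f(0)=1$ normalization, so the two proofs rest on the same analytic facts.
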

\begin{proof}
  Seeing $x_R=x_0+Rx_\infty\le Rx_\infty$, we focus on the existence of such an $L$, so that:
  \begin{equation}\label{eq:stab_bound_infty}
    x_\infty \le -Ly_0^2\;.
  \end{equation}
  To achieve this, we'll show that there exist $L_1>0$ and $L_2>0$, such that:
  \begin{displaymath}
    y_0^2 \le L_1\theta^2
    \quad\textrm{ and }\quad
    x_\infty \le -L_2\theta^2
    \;,\quad\forall \theta\in[-\pi,\;\pi]\;;
  \end{displaymath}
  in addition, $L_1$ and $L_2$ are determined by $\mathcal{D}_{x}$ and $\mathcal{D}_{xx}$, respectively.
  To this end, the constant $L$ can be chosen as $L_2/L_1$. 

  \noindent
  {\bf Part 1.}
  First let us consider $L_1$ and compute the derivative of $y_0(\theta)=-\sum_{k=-l}^ra_k\sin k\theta$.
  Following a similar procedure as in the proof of Lemma~\ref{lm:stab_dx}, we obtain:
  \begin{displaymath}
    y_0'(\theta) = -1 - \frac{(-1)^{l-r}2^{l+r}l!r!}{(l+r)!}\left(\sin\frac{\theta}{2}\right)^{2r+2}\cos^{l-r-1}\theta\;,
  \end{displaymath}
  where $l=r+1$ or $l=r+2$.
  By the mean value theorem and using $y_0(0)=0$, we integrate the latest equation from $0$ to $\theta\in[-\pi,\;\pi]$ to obtain:
  \begin{align*}
    y_0(\theta) = -\theta - \theta\frac{(-1)^{l-r}2^{l+r}l!r!}{(l+r)!}\left(\sin\frac{\theta'}{2}\right)^{2r+2}\cos^{l-r-1}\theta'\;,
  \end{align*}
  where $\theta'$ is some number between $0$ and $\theta$.
  It follows immedinately that:
  \begin{displaymath}
    y_0^2\le \left(1+\frac{2^{l+r}l!r!}{(l+r)!}\right)^2\theta^2\;,\quad \forall \theta\in[-\pi,\;\pi]\;.
  \end{displaymath}

  \medskip

  \noindent
  {\bf Part 2.}
  Now we focus on $L_2$.
  Because $x_\infty(\theta) = b_0+2\sum_{k=-q}^qb_k\cos k\theta$ is an even function, we may assume $\theta\in[0,\;\pi]$.
  In the proof of the previous theorem, it was obtained that $x_\infty(\theta)=-\theta^2+(-1)^{q+1}c_3\theta^{2q+2}+O(\theta^{2q+4})$.
  Hence $f(\theta)\eqdef-x_\infty(\theta)/\theta^2$ belongs to $C[0,\;\pi]$ and it achieves the minimum $L_2$ at some $\theta'\in[0,\;\pi]$.
  Following Lemma~\ref{lm:stab_dxx} and its proof, $f(\theta)>0$ for all $0<\theta\le\pi$; combining with $f(0)=1$, we obtain immediately $L_2>0$.
  Because $f(\cdot)$ is determined by $\mathcal{D}_{xx}$, so is $L_2$.
\end{proof}

\section{Fully Discretized Systems}
\label{sec:full}
The previous stability result is extended to fully-discretized methods by combining a stable semi-discretization scheme with an explicit Runge-Kutta (ERK) method for time integration.
Suppose the spatial discretization gives rise to an ODE system:
\begin{equation}\label{eq:full_ode}
  \frac{d\bs{W}}{dt} = -\frac{1}{h}\bs{A}\bs{W} + \frac{\nu}{h^2}\bs{B}\bs{W}\;,
\end{equation}

This ODE system is integrated by an ERK method defined by the Butcher tableau~\cite{JCButcher:2016a}:
\begin{equation}\label{eq:full_butcher_erk}
  \begin{array}{c|cccccc}
    0      & 0      & 0      & 0      & \cdots & 0         & 0      \\
    c_2    & a_{21} & 0      & 0      & \cdots & 0         & 0      \\
    c_3    & a_{31} & a_{32} & 0      & \cdots & 0         & 0      \\
    \vdots & \vdots & \vdots & \vdots & \ddots & \vdots    & \vdots \\
    c_s    & a_{s1} & a_{s2} & a_{s3} & \cdots & a_{s,s-1} & 0      \\ \hline
           & b_1    & b_2    & b_3    & \cdots & b_{s-1}   & b_s
  \end{array}\;,
\end{equation}
where $c_i = \sum_{j=1}^{i-1}a_{ij}\;,\ 2\le i\le s$, $s$ is the stage number, and $\sum_{j=1}^sb_j=1$.
Then updating the solution from one time step $t_n$ to the next $t_{n+1}=t_n+\delta t$ follows:
\begin{align*}
  \bs{W}^{(1)} &= \bs{W}^n\;, \\
  \bs{W}^{(i)} &= \bs{W}^n + \sum_{j=1}^{i-1}a_{ij}\delta t\left(-\frac{1}{h}\bs{A}\bs{W}^{(j)}+\frac{\nu}{h^2}\bs{B}\bs{W}^{(j)}\right)\;,\quad 2\le i\le s\;, \\
  \bs{W}^{n+1} &= \bs{W}^n + \sum_{j=1}^sb_j\delta t\left(-\frac{1}{h}\bs{A}\bs{W}^{(j)}+\frac{\nu}{h^2}\bs{B}\bs{W}^{(j)}\right)\;.
\end{align*}
Let $\mu=\delta t/h$, which is usually used in practice to determine the time step size by the Courant condition, then $-(\delta t/h)\bs{A} + (\nu\delta t/h^2)\bs{B} = \mu\bs{M}$ with $\bs{M}=-\bs{A}+R\bs{B}$ as before.
Then one has $\bs{W}^{(i)} = p_{i-1}(\mu\bs{M})\bs{W}^n\;,\ 1\le i\le s$ and $\bs{W}^{n+1}=p_s(\mu\bs{M})\bs{W}^n$, where $p_i\;,\ 0\le i\le s$ is a polynomial of degree no larger than $i$ defined recursively by:
\begin{displaymath}
  p_0(z)     = 1\;;\quad
  p_{i-1}(z) = 1 + \sum_{j=1}^{i-1}a_{ij}z\,p_{j-1}(z)\;,\quad 2\le i\le s\;;\quad
  p_s(z)     = 1 + \sum_{j=1}^sb_jz\,p_{j-1}(z)\;.
\end{displaymath}
Suppose the method is $m$-th order accurate, one must have $p_s(z) = \sum_{k=0}^mz^k/k!+O(z^{m+1})$ and thusly $s\ge m$.
The {\it stability region} of the ERK method~(\ref{eq:full_butcher_erk}) is defined:
\begin{equation}\label{eq:full_erk_sr}
  \mathcal{S} = \{z\in\mathbb{C}:\;\abs{p_s(z)}\le 1\}\;.
\end{equation}
Because the numerical solution at a time step $t_n=n\delta t$ is $\bs{W}^n=\left[p_s(\mu\bs{M})\right]^n\bs{W}^0$, one sees that a necessary condition for the numerical method to be stable is $\mu\lambda\in\mathcal{S}$, where $\lambda$ is any eigenvalue of $\bs{M}$.
Note that fixing $h$, the eigenvalues of $\bs{M}$ are contained in a closed set $\Lambda(R)$ given by (\ref{eq:stab_traj}), one expects $\mu\Lambda(R)$ shrinks to zero from the left as $\delta t\to0$.
Here $\mu\Lambda(R)$ is defined as the set of $\mu\lambda$ for all $\lambda\in\Lambda(R)$.

For all spatial discretizations chosen according to Theorem~\ref{thm:stab_ade}, $\bs{M}$ is semistable and $\Lambda^\ast(R)$ is contained in the open left complex plane.
In this case, it is not difficult to see that $\mu\Lambda(R)\subseteq\mathcal{S}$ is also a sufficient condition for ensuring the semistability of $p_s(\mu\bs{M})$, hence the corresponding fully-discretized method is stable.
The following theorem shows that for any time-integrator that is at least first-order accurate, the fully-discretized method is always conditionally stable.
\begin{theorem}\label{thm:full_hord}
  Let a spatial discretization in Theorem~\ref{thm:stab_ade} be  paired with an explicit Runge-Kutta method with order $m\ge1$, then there exist positive numbers $\alpha_0$, $\beta_0$, and $\gamma_0$, which only depend on the discretizations $\mathcal{D}_x$, $\mathcal{D}_{xx}$, and the time-integrator, such that for all $\delta t>0$ satisfying:
  \begin{equation}\label{eq:full_cfl}
    \delta t < \nu\gamma_0\quad\textrm{ and }\quad 
    \left(\alpha_0+\frac{\nu\beta_0}{h}\right)\frac{\delta t}{h} < 1\;,
  \end{equation}
  the fully-discretized method is stable.
\end{theorem}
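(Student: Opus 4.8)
The plan is to reduce the whole statement to the set containment $\mu\Lambda(R)\subseteq\mathcal{S}$, since the discussion preceding the theorem already establishes that this inclusion guarantees semistability of $p_s(\mu\bs{M})$ and hence stability of the fully-discretized scheme. Equivalently, I must verify $\abs{p_s(\mu\lambda_R(e^{i\theta}))}\le1$ for every $\theta\in[-\pi,\pi]$. The value $\theta=0$ gives $z=0$ and $p_s(0)=1$, which is harmless because the corresponding eigenvalue of $\bs{M}$ is the regular zero identified in Theorem~\ref{thm:stab_ade}; so the work is to control the remaining frequencies.

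The first building block is a local description of $\mathcal{S}$ near the origin. Writing $z=a+ib$ and using that the integrator has order $m\ge1$, one has $p_s(z)=1+z+\gamma_2z^2+O(z^3)$ with $\gamma_2$ the method-dependent coefficient of $z^2$, whence $\abs{p_s(z)}^2=1+2a+(1+2\gamma_2)a^2+(1-2\gamma_2)b^2+O(\abs{z}^3)$. I would then prove a \emph{parabolic cap lemma}: there exist $K>0$ and $\rho_0\in(0,1]$, depending only on $p_s$, such that the cap $\mathcal{P}=\{z=a+ib:\ a\le-Kb^2,\ \abs{z}\le\rho_0\}$ lies in $\mathcal{S}$. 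Indeed, on the rightmost boundary $a=-Kb^2$ the expansion reads $\abs{p_s}^2=1+(1-2\gamma_2-2K)b^2+O(b^3)$, so choosing $K>(1-2\gamma_2)/2$ and then $\rho_0$ small enough to absorb the cubic remainder forces $\abs{p_s}^2\le1$; the interior $a<-Kb^2$ is covered by noting $\partial_a\abs{p_s(z)}^2=2+O(\abs{z})>0$ on a small cap, so the supremum over $\mathcal{P}$ is attained on that boundary.

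Next I would pin the scaled trajectory inside $\mathcal{P}$ using the two hypotheses. Setting $\alpha_0=\rho_0^{-1}\max_\theta\abs{\lambda_0(e^{i\theta})}$ and $\beta_0=\rho_0^{-1}\max_\theta\abs{\lambda_\infty(e^{i\theta})}$, both finite and determined by $\mathcal{D}_x,\mathcal{D}_{xx}$, the triangle inequality gives $\abs{\lambda_R}\le\rho_0(\alpha_0+R\beta_0)$ with $R=\nu/h$, so the second inequality of~(\ref{eq:full_cfl}) yields $\abs{\mu\lambda_R}=\mu\abs{\lambda_R}<\rho_0$, confining $z=\mu\lambda_R(e^{i\theta})$ to the disk $\abs{z}\le\rho_0$. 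For the parabolic constraint I invoke the global bound of Theorem~\ref{thm:stab_bound}, $x_R\le-RL\,y_R^2$: with $a=\mu x_R$ and $b=\mu y_R$ this becomes $a\le-(RL/\mu)b^2=-(\nu L/\delta t)b^2$, and the first inequality $\delta t<\nu\gamma_0$ with the choice $\gamma_0=L/K$ gives $\nu L/\delta t>K$, hence $a\le-Kb^2$. Thus the entire scaled trajectory lies in $\mathcal{P}\subseteq\mathcal{S}$, i.e.\ $\mu\Lambda(R)\subseteq\mathcal{S}$, which finishes the argument.

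The crux, and the only genuinely delicate point, is the tangency at the origin: by Theorem~\ref{thm:stab_asym} one has $x_R=O(y_R^2)$, so the trajectory leaves $z=0$ tangent to the imaginary axis, and for $m\ge1$ the boundary of $\mathcal{S}$ is tangent there as well, so a crude disk bound cannot separate them. What rescues the estimate is the diffusive curvature: the $-R\,y_R^2$ contribution to $x_R$ bends the trajectory into the left half-plane at exactly the quadratic rate the cap lemma demands, and the first CFL condition is precisely the statement that this curvature beats the $O(1/\mu)$ flattening induced by the scaling. Because $\mathcal{S}$ is bounded for an explicit method, the second condition, which encodes the usual advective $\delta t\lesssim h$ and diffusive $\delta t\lesssim h^2/\nu$ restrictions through $\abs{\mu\lambda_R}<\rho_0$, is what confines the high-frequency eigenvalues, where $\abs{\lambda_R}\sim R$, to the cap. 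I expect the uniform domination of the cubic remainder on the fixed cap to be the main bookkeeping obstacle, while conceptually the tangency-versus-curvature balance is the heart of the matter.
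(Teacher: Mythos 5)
Your proposal is correct and takes essentially the same route as the paper's own proof: both reduce the theorem to a parabolic-cap lemma (a region of the form $\{x\le -Ky^2\}$ intersected with a small neighborhood of the origin lying inside $\mathcal{S}$, proved from the order-$\ge1$ expansion $p_s(z)=1+z+O(z^2)$), and both then place $\mu\Lambda(R)$ inside that cap by invoking the global bound $x_R\le -RL\,y_R^2$ of Theorem~\ref{thm:stab_bound} for the condition $\delta t<\nu\gamma_0$ and the boundedness of $\Lambda(R)$ for the Courant-type condition. The differences are only in bookkeeping: the paper bounds the quadratic remainder by an explicit constant $C_0$ valid on $\abs{z}<1$ and treats the real-axis segment of the cap separately, whereas you use a Taylor expansion with a monotonicity argument in the real part of $z$; both are valid.
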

\noindent
{\bf Remark}. 
The second of constraints~(\ref{eq:full_cfl}) takes the same form of usual Courant conditions for advection-diffusion equations.
\begin{proof}
  By Theorem~\ref{thm:stab_bound}, there exists a positive number $L>0$ that only depends on $\mathcal{D}_x$ and $\mathcal{D}_{xx}$, such that:
  \begin{displaymath}
    x \le -RLy^2\;\quad\forall x+iy\in\Lambda(R)\;.
  \end{displaymath}
  Furthermore, it is clearly that there exists positive numbers $Y_0$, $X_0$, and $X_1$ that depends only on $\mathcal{D}_x$ and $\mathcal{D}_{xx}$, such that $\abs{y}<Y_0$ and $\abs{x}<X_0+RX_1$ for all $x+iy\in\Lambda(R)$.

  Thus for any $x+iy\in\mu\Lambda(R)$, one has:
  \begin{displaymath}
    x \le -\frac{RL}{\mu}y^2 = -\frac{\nu L}{\delta t}y^2\;,\ \abs{x}<\frac{\delta t(X_0+RX_1)}{h}\;,\ \textrm{ and }\ \abs{y}<\frac{\delta tY_0}{h}\;.
  \end{displaymath}
  To this end, it suffices to show that there exists a $\varepsilon_0>0$ and $M_0>0$, such that:
  \begin{equation}\label{eq:full_quadset}
    \mathcal{D}(\varepsilon_0,M_0) \eqdef \{z=x+iy:\;-M_0\varepsilon_0^2<x<-M_0y^2,\,\abs{y}<\varepsilon_0\} \subseteq \mathcal{S}\;,
  \end{equation}
  with $\mathcal{S}$ being the stability region of the chosen ERK method.
  Indeed, if (\ref{eq:full_quadset}) is true, then for all $\delta t$ such that:
  \begin{displaymath}
    \delta t < \min\left(\frac{\varepsilon_0h}{X_0+RX_1},\;\frac{\varepsilon_0h}{Y_0},\;\frac{\nu L}{M_0}\right)\;, 
  \end{displaymath}
  one has $\mu\Lambda(R)\backslash\{0\}\subseteq\mathcal{D}(\varepsilon_0,M_0)\subseteq\mathcal{S}$; thus the fully-discretized method is stable.
  Hence the constants can be chosen as $\alpha_0 = \max(X_0/(M_0\varepsilon_0^2),\,Y_0/\varepsilon_0)$, $\beta_0=X_1/(M_0\varepsilon_0^2)$, and $\gamma_0=L/M_0$.

  Next we focus on (\ref{eq:full_quadset}).
  Because the order of the time-integrator is $m\ge1$, one has:
  \begin{displaymath}
    p_s(z) = 1 + z + C(z)z^2\;,\quad
  \end{displaymath}
  where $C(z)$ is a polynomial in $z$ and it is bounded by some constant $C_0$ for all $\abs{z}<1$.
  For all such $z$, there is the estimate:
  \begin{displaymath}
    \abs{p_s(z)}^2 
    \le \abs{1+z}^2+2C_0\abs{z}^2\abs{1+z}+C_0^2\abs{z}^4 
    \le \abs{1+z}^2 + \left(4C_0+C_0^2\right)\abs{z}^2\;.
  \end{displaymath}
  Denote the set of all pairs of positive numbers $(\varepsilon,M)$ such that $\mathcal{D}(\varepsilon,M)\subseteq\{z\in\mathbb{C}:\;\abs{z}<1\}$ by $\mathcal{P}$; we aim at finding a $(\varepsilon_0,M_0)\in\mathcal{P}$ such that $\mathcal{D}(\varepsilon_0,M_0)\subseteq\mathcal{S}$.

  Let us fix $(\varepsilon,M)\in\mathcal{P}$. 
  Then for any $z=x+iy\in\mathcal{D}(\varepsilon,M)$ such that $y\ne0$, we may write $x=-\tilde{M}y^2$ where $\tilde{M}>M$ and $\abs{y}<\sqrt{M/\tilde{M}}\varepsilon_0<\varepsilon_0$.
  Using the previous estimate, one has:
  \begin{displaymath}
    \abs{p_s(z)}^2 
    \le \abs{1-\tilde{M}y^2+iy}^2+(4C_0+C_0^2)\abs{-\tilde{M}y^2+iy}^2 
    = 1-y^2(2\tilde{M}-C_1-C_1\tilde{M}^2y^2)\;,
  \end{displaymath}
  where $C_1=1+4C_0+C_0^2$.

  To this end, let us fix $M_0>C_1$, then there exists a $\varepsilon_0'>0$ such that for all $0<\varepsilon_1<\varepsilon_0'$, one has $M_0\varepsilon_1^2<1/C_1$ and $(\varepsilon_1,M_0)\in\mathcal{P}$; in addition given any $z=x+iy\in\mathcal{D}(\varepsilon_1,M_0)$ with $y\ne0$ and $x=-\tilde{M}y^2$, there is:
  \begin{displaymath}
    2\tilde{M}-C_1-C_1\tilde{M}^2y^2 > 2\tilde{M}-C_1-C_1\tilde{M}M_0\varepsilon_1^2 > 2\tilde{M}-C_1-\tilde{M} > M_0-C_1>0\;,
  \end{displaymath}
  hence following the previous analysis one obtains $\abs{p_s(z)}<1$ and $z\in\mathcal{S}$.

  Lastly, let us consider the intersection of $\mathcal{D}$ and the real axis.
  In particular, let $z=x\in(-M_1(\varepsilon_0')^2,\;0)$ (which is contained in $(-1,0)$):
  \begin{displaymath}
    \abs{p_s(z)} = \abs{1+x+x^2C(x)} \le 1+x+C_0x^2 = 1 - \abs{x}(1-C_0\abs{x})\;.
  \end{displaymath}
  Thus for any $\varepsilon_0>\in(0,\;\varepsilon_0')$ such that $\varepsilon_0<1/\sqrt{C_0M_0}$, the set $\mathcal{D}(\varepsilon_0,M_0)$ satisfies (\ref{eq:full_quadset}).
\end{proof}

In the second half of this section, we prove some interesting results in the special case $R=0$, i.e., solving the advection equation $w_t+w_x=0$.
General stability result seems to be difficult to derive in this case since the scaling between the real part and the imaginary part of $\Lambda(0)$ near $z=0$ depends highly on the order of the method (see Theorem~\ref{thm:stab_asym}).
For this reason, we focus on several widely used ERK listed below, most of which can be found in the text by Hairer, N{\o}rsett, and Wanner~\cite{EHairer:1993a} whereas others include the strong stability preserving (SSP) methods~\cite{SGottlieb:2009a,SGottlieb:2001a} and the low-storage methods~\cite{JHWilliamson:1980a}:
\begin{enumerate}
  \item The first-order forward Euler method (FE), where $p_s(z)=1+z$.
  \item Any two-stage, second-order method (RK2), where $p_s(z)=1+z+\frac{1}{2}z^2$, such as the original method by Runge and a later SSP version.
  \item Any three-stage, third-order method (RK3), where $p_s(z)=1+z+\frac{1}{2}z^2+\frac{1}{6}z^3$, which includes the earlier one by Heun and a later SSP version.
  \item A low-storage, four-stage, third-order method (LSRK3) by Runge, whose Butcher tableau is given by:
    \begin{equation}\label{eq:full_butcher_lsrk3}
      \begin{array}{c|cccc}
        0   & \\ 
        1/2 & 1/2 \\ 
        1   & 0   & 1 \\ 
        1   & 0   & 0   & 1 \\ \hline
            & 1/6 & 2/3 & 0 & 1/6
      \end{array}
    \end{equation}
    Correspondingly, $p_s(z)=1+z+\frac{1}{2}z^2+\frac{1}{6}z^3+\frac{1}{12}z^4$.
  \item Any four-stage, fourth-order method (RK4), where $p_s(z)=1+z+\frac{1}{2}z^2+\frac{1}{6}z^3+\frac{1}{24}z^4$.
    Note that this is the highest-order ERK one can construct, such that the order is the same as the number of stages.
\end{enumerate}
The next result shows that in general a high-order spatial discretization cannot be paired with some low-order temporal schemes to yield a conditionally stable method under the usual Courant condition.
\begin{theorem}\label{thm:full_adv_instab}
  Let the advection equation $w_t+w_x=0$ be discretized by an $\mathcal{D}_x$ with the upwind stencil $l\ge2$, and let the time-integrator be FE.
  Then for any positive number $\mu_c$, the method is unstable in the limit $h\to0$ if the time step size is calculated as $\delta t = \mu_c h$.

  Furthermore, if either RK2 or LSRK3 is used, any fully-discretized method built in combination with an $\mathcal{D}_x$ such that $l\ge3$ is unstable in the limit $h\to0$ given the fixed Courant number $\mu_c>0$.
\end{theorem}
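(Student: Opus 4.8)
The plan is to reduce the instability question to a purely local statement about the scaled eigenvalue curve $\mu_c\Lambda(0)$ near the origin. Since $\bs{M}=-\bs{A}$ is circulant and hence diagonalizable, the fully-discretized operator $p_s(\mu_c\bs{M})$ is power-bounded if and only if $\abs{p_s(\mu_c\lambda)}\le1$ for every eigenvalue $\lambda$, and these eigenvalues are exactly $\lambda_0(e^{i2\pi k/N})$, $1\le k\le N$. As $h=1/N\to0$ the sample points become dense in the continuous trajectory $\Lambda(0)$, and the smallest nonzero angle is $\theta=2\pi h\to0$. Thus to prove instability in the limit it suffices to exhibit an interval $(0,\theta_0)$ on which $g(\theta)\eqdef\abs{p_s(\mu_c\lambda_0(e^{i\theta}))}^2-1>0$: for every $h<\theta_0/(2\pi)$ at least one eigenvalue then satisfies $\abs{p_s(\mu_c\lambda)}>1$, so $\nrm{[p_s(\mu_c\bs{M})]^n}\to\infty$.

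The key input is the asymptotic shape of the trajectory. By Theorem~\ref{thm:stab_asym}(i) and its proof, $x_0(\theta)=-C_1\theta^{2l}+O(\theta^{2l+2})$ and $y_0(\theta)=-\theta+O(\theta^{2r+3})$ as $\theta\to0$. I would then substitute the scaled point $z=\mu_c\lambda_0(e^{i\theta})=\mu_c x_0+i\mu_c y_0$ into each stability function. For forward Euler, $p_s(z)=1+z$ gives $g(\theta)=2\mu_c x_0+\mu_c^2(x_0^2+y_0^2)$; the dominant contribution is $\mu_c^2y_0^2\sim\mu_c^2\theta^2>0$, while the stabilizing real-part term obeys $2\mu_c x_0=O(\theta^{2l})$. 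Because $l\ge2$ forces $2l>2$, the positive $\theta^2$ term wins for all small $\theta$ and all $\mu_c>0$, which establishes the first claim.

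For RK2 and LSRK3 the mechanism is that the stability region locally excludes the imaginary axis. I would record that $\abs{p_s(iy)}^2=1+\kappa y^4+O(y^6)$ with $\kappa=1/4$ for RK2 and $\kappa=1/12$ for LSRK3, both positive, and that the transversal derivative $\partial_x\abs{p_s}^2\big|_{z=iy}=2\,\oname{Re}\bigl(\overline{p_s(iy)}\,p_s'(iy)\bigr)=2+O(y^2)>0$. Expanding $\abs{p_s(z)}^2-1$ about $z=iY$ with $Y=\mu_c y_0$ and $X=\mu_c x_0$ small of order $\theta^{2l}$, one obtains $g(\theta)=\kappa Y^4+(2+O(Y^2))X+O(X^2)+O(Y^6)\sim\kappa\mu_c^4\theta^4-2\mu_c C_1\theta^{2l}$. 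The hypothesis $l\ge3$ gives $2l>4$, so the positive $\theta^4$ term dominates for all $\mu_c>0$, yielding instability exactly as in the forward Euler case.

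The delicate point, and the main obstacle, is making the leading-order comparison rigorous uniformly in a neighborhood of $\theta=0$: one must verify that the $O(\cdot)$ remainders in both the trajectory expansion and the stability-function expansion stay bounded on a fixed interval $(0,\theta_0)$ so that they cannot overturn the sign of the leading term. The strict separation of exponents ($2l>2$ for FE, $2l>4$ for RK2/LSRK3) is what makes this possible, and it simultaneously explains the sharpness of the hypotheses: in the borderline cases $l=1$ (FE) and $l=2$ (RK2, LSRK3) the competing terms share the same order $\theta^2$ or $\theta^4$, the leading coefficient becomes $\mu_c$-dependent (for RK2 it is $\mu_c(\frac{1}{4}\mu_c^3-2C_1)$), and a suitable Courant restriction restores stability, consistent with the exclusion of these orders from the statement.
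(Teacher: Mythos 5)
Your proposal is correct and follows essentially the same route as the paper's proof: reduce instability to the sign of $\abs{p_s(\mu_c\lambda_0(e^{i\theta}))}^2-1$ for small nonzero $\theta$, invoke Theorem~\ref{thm:stab_asym} for the trajectory asymptotics, and win by comparing the exponent of the destabilizing term ($y^2$ for FE, $\tfrac14 y^4$ or $\tfrac1{12}y^4$ for RK2/LSRK3) against the $O(y^{2l})$ stabilizing real part. Your organization of the RK2/LSRK3 case via the imaginary-axis expansion and transversal derivative is only a cosmetic repackaging of the paper's direct polynomial expansion, and your closing remark on the borderline cases $l=1$ and $l=2$ matches the paper's remark on why the hypotheses are sharp.
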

\begin{proof}
  By Theorem~\ref{thm:stab_asym}, the trajectory $\Lambda(0)$ behaves as $x=-C_1y^{2l}+O(y^{2l+1})$ for some constant $C_1>0$ near the origin; thus the trajectory $\mu_c\Lambda(0)$ behaves as $x=-C_1\mu_c^{1-2l}y^{2l}+O(y^{2l+1})$ in the same limit.
  First let us suppose the time-integrator is given by the forward Euler method, then $p_s(z)=1+z$.
  Consider the value of $p_s(z)$ along the path $x_0(\theta)+iy_0(\theta)\in\mu_c\Lambda(0)$ as $\theta\to0$, one has:
  \begin{align*}
    \abs{p_s(x_0+iy_0)}^2 
    &= \left[1 - C_1\mu_c^{1-2l}y_0^{2l}+O(y_0^{2l+1})\right]^2+y_0^2 \\
    &= 1 + y_0^2\left[1-2C_1\mu_c^{1-2l}y_0^{2l-2}+C_1^2\mu_c^{2-4l}y_0^{4l-2}+O(y_0^{2l-1})\right]\;.
  \end{align*}
  Following the proof of Theorem~\ref{thm:stab_asym}, we have $y_0(\theta)=-\mu_c\theta+O(\theta^{2r+3})$. 
  Hence there exists a $\theta_0>0$ such that for all $\abs{\theta}<\theta_0$, $y_0(\theta)\ne0$ as long as $\theta\ne0$ and the quantity in the square bracket on the right-hand side of the latest equation is positive.
  Thus for all $\abs{\theta}<\theta_0$ and $\theta\ne0$, $\abs{p_s(x_0(\theta)+iy_0(\theta))}>1$.
  For sufficiently small $h$, there is always eigenvalues of $p_s(\mu_c\bs{M})$ correspond to a non-zero $\theta$ with magnitude small than $\theta_0$; hence for these $h$, the corresponding fully-discretized method is unstable.

  \smallskip

  Next, suppose RK2 is used, where $p_s(z) = 1 + z + \frac{1}{2}z^2$.
  Consider the path $x_0(\theta)+iy_0(\theta)$ as $\theta\to0$ again:
  \begin{align*}
    \abs{p_s(x_0+iy_0)}^2
    &= 1 + 2x_0 + 2x_0^2 + x_0^3 + \frac{1}{4}x_0^4 + \frac{1}{4}y_0^4+x_0y_0^2+\frac{1}{2}x_0^2y_0^2 \\
    &= 1 + y_0^4\left[\frac{1}{4}-2C_1\mu_c^{1-2l}y_0^{2l-4}+O(y_0^{2l-3})\right]\;.
  \end{align*}
  If $l\ge3$, one has $2l-4>0$ in the square bracket and the instability of the fully-discretized method for sufficiently small $h$ follows a similar argument as before.

  \smallskip

  For the LSRK3 method, where $p_s(z) = 1 + z + \frac{1}{2}z^2 + \frac{1}{6}z^3 + \frac{1}{12}z^4$, along the path $x_0(\theta)+iy_0(\theta)$ as $\theta\to0$ one has:
  \begin{align*}
    \abs{p_s(x_0+iy_0)}^2
    &= \left[1+x-\frac{1}{2}y_0^2+\frac{1}{12}y_0^4+O(y_0^{2l+1})\right]^2 + y_0^2\left[1-\frac{1}{6}y_0^2+O(y_0^{2l})\right]^2 \\
    &= 1 + y_0^4\left[\frac{1}{12}-\frac{1}{18}y_0^2+\frac{1}{144}y_0^8-2C_1\mu_c^{1-2l}y_0^{2l-4}+O(y_0^{2l-3})\right]\;.
  \end{align*}
  And the conclusion follows from a similar argument if $l\ge3$.
\end{proof}
{\bf Remark}.
This theorem concerns the stability with fixed Courant number, i.e., the ratio between $\delta t$ and $h$ is kept constant while refining the grids.
It does not, however, indicate instability in the limit $\delta t\to0$ while fixing $h$.
For example in the case of the FE time-integrator, substituting $\mu=\frac{\delta t}{h}$ one has $y_0=-\frac{\delta t}{h}(\theta+O(\theta^{2r+3})$ as well as an estimate on the leading terms of the quantity in the square bracket as $1-2C_1\mu^{1-2l}y_0^{2l-2}=1-2C_1h\theta^{2l-2}/\delta t+O(\theta^{2r+3})$.
Let the grid be fixed, the smallest non-zero $\theta$ corresponds to an eigenvalue of the discrete system that scales linearly with $h$, thus the square bracket could be negative in the limit $\delta t\to0$ hence it renders a stable fully-discretized method.

Finally, we demonstrate a simple criterion for ERKs, which could easily be extended to other time-integrators such as the implicit and multi-step ones, so that they result in a conditionally stable method when combined with any $\mathcal{D}_x$ that is given by Lemma~\ref{lm:stab_dx}.
\begin{theorem}\label{thm:full_adv_stab}
  Defining the set $\mathcal{D}^-(\varepsilon)=\{z\in\mathbb{C}:\;\abs{z}<\varepsilon\ \textrm{ and }\ \oname{Re}\;z<0\}$.
  If there exists a $\varepsilon_0>0$ such that $\mathcal{D}^-(\varepsilon_0)\subseteq\mathcal{S}$, then for any $\mathcal{D}_x$ as given by Lemma~\ref{lm:stab_dx}, there exists a positive number $\alpha_0>0$ that is independent of $h$ and $\delta t$ such that the corresponding fully-discretized method is stable for all $\delta t>0$ such that $\alpha_0\frac{\delta t}{h}<1$.
\end{theorem}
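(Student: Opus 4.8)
The plan is to reduce the stability of the fully-discretized method to the single containment $\mu\Lambda(0)\subseteq\mathcal{S}$ and then to lean on the circulant --- hence normal --- structure of $\bs{M}$. Since $\bs{M}$ is circulant it is unitarily diagonalizable, so $p_s(\mu\bs{M})$ is normal and $\nrm{[p_s(\mu\bs{M})]^n}_2$ equals the $n$-th power of its spectral radius. Consequently the method is stable precisely when every eigenvalue $\mu\lambda$ of $\mu\bs{M}$ satisfies $\abs{p_s(\mu\lambda)}\le1$, i.e.\ when $\mu\lambda\in\mathcal{S}$ for every $\lambda$ among the samples $\lambda_0(e^{i2k\pi/N})$ of the curve $\Lambda(0)$. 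This observation turns the whole question into locating the discrete spectrum inside $\mathcal{S}$.

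First I would record that $\Lambda(0)=\{\lambda_0(e^{i\theta}):0\le\theta<2\pi\}$ is a fixed compact set determined only by $\mathcal{D}_x$ through the coefficients $a_k$, and in particular is \emph{independent of $h$ and $N$}; thus $B\eqdef\max_{\abs{s}=1}\abs{\lambda_0(s)}$ is a finite constant depending only on the stencil $(l,r)$. I would then set $\alpha_0=B/\varepsilon_0$. Whenever $\alpha_0\,\delta t/h<1$ we have $\mu=\delta t/h<\varepsilon_0/B$, so every eigenvalue $\mu\lambda$ obeys $\abs{\mu\lambda}\le\mu B<\varepsilon_0$. Next I would split the spectrum. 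For the nonzero eigenvalues $\lambda_0(e^{i2k\pi/N})$, $1\le k\le N-1$, Lemma~\ref{lm:stab_dx} gives $\oname{Re}\,\lambda_0(e^{i2k\pi/N})<0$, and since $\mu>0$ we get $\oname{Re}(\mu\lambda)<0$; combined with $\abs{\mu\lambda}<\varepsilon_0$ this places $\mu\lambda\in\mathcal{D}^-(\varepsilon_0)\subseteq\mathcal{S}$, hence $\abs{p_s(\mu\lambda)}\le1$. For the single eigenvalue $\lambda_0(1)=0$ (the constant mode), $\mu\lambda=0$ and $p_s(0)=1$ lies on the unit circle but is harmless: because $p_s(\mu\bs{M})$ is normal, this unit-modulus eigenvalue is automatically semisimple and cannot spoil power-boundedness. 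Collecting the two cases, the spectral radius of $p_s(\mu\bs{M})$ is at most $1$, and normality yields $\nrm{[p_s(\mu\bs{M})]^n}_2\le1$ for all $n$, which is the desired stability.

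The step I expect to matter most --- and the crux distinguishing a genuine Courant-type threshold from a spurious one --- is the assertion that $B$, and therefore $\alpha_0$, is independent of $h$. This is exactly where the periodic/circulant setting pays off: the eigenvalues of $\bs{M}$ on every mesh are merely samples of the \emph{same} continuous curve $\Lambda(0)$, so refining the grid cannot enlarge the region that must be forced into $\mathcal{S}$, and the one scaling threshold $\varepsilon_0/B$ works uniformly. Everything else is bookkeeping: the strict negativity of the real part on $\Lambda^\ast(0)$ is precisely Lemma~\ref{lm:stab_dx}, and the treatment of the boundary eigenvalue at the origin uses only the consistency relation $p_s(0)=1$ together with the normality of circulant matrices. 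The role of the hypothesis $\mathcal{D}^-(\varepsilon_0)\subseteq\mathcal{S}$ is simply to guarantee that the tangency of $\Lambda(0)$ to the imaginary axis near the origin (quantified in Theorem~\ref{thm:stab_asym}) is benign for this particular time-integrator, sidestepping the obstruction that produces instability in Theorem~\ref{thm:full_adv_instab}.
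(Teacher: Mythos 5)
Your proof is correct and follows essentially the same route as the paper's: bound the $h$-independent compact trajectory $\Lambda(0)$ by a constant, set $\alpha_0$ equal to that constant divided by $\varepsilon_0$, and use Lemma~\ref{lm:stab_dx} to place all scaled nonzero eigenvalues in $\mathcal{D}^-(\varepsilon_0)\subseteq\mathcal{S}$ (the paper takes $\alpha_0=M_0/\varepsilon_0$, identical to your $B/\varepsilon_0$). The only difference is that you explicitly justify the step ``spectrum in $\mathcal{S}$ implies stability'' via normality of circulant matrices and the semisimplicity of the zero mode, which the paper simply asserts earlier in Section~\ref{sec:full} as a consequence of semistability; this is a welcome filling-in of detail rather than a different argument.
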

\begin{proof}
  The eigenvalues of the discrete system belong to $\mu\Lambda(0)$.
  Because the trajectory $\Lambda(0)$ is closed and independent of $h$, in the view of Lemma~\ref{lm:stab_dx} all but one zero eigenvalue of the fully-discretized system has negative real part.
  Furthermore, there exists an $X_0>0$ such that for all $z\in\Lambda(0)$, $\abs{z}<M_0$; hence the modulus of any eigenvalue belonging to $\mu\Lambda(0)$ is smaller than $M_0\delta t/h$.
  To this end, for all $\delta t>0$ such that $M_0\delta t/h<\varepsilon_0$, one has $\mu\Lambda^\ast(0)\subseteq\mathcal{D}^-(\varepsilon_0)\subseteq\mathcal{S}$, i.e., the method is stable under the Courant condition with $\alpha_0=M_0/\varepsilon_0$.
\end{proof}
As the theorem does not require an explicit time-integrator, an immediately consequence is that one can obtain an unconditionally stable method by combining such $\mathcal{D}_x$ with any A-stable time-integrator\footnote{Hence it has to be implicit.}, because the $\varepsilon_0$ in the theorem can be chosen as an arbitrarily large number.
Within the range of explicit methods, using this theorem we obtain the following stability result for several third-order and fourth order Runge-Kutta methods.
\begin{corollary}\label{cor:full_adv}
  The method obtained by combining an $\mathcal{D}_x$ given in Lemma~\ref{lm:stab_dx} with any $s$-stage, $s$-th order accurate ERK with $s=3$ or $s=4$ is conditionally stable.
\end{corollary}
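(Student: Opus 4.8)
The plan is to reduce the statement to Theorem~\ref{thm:full_adv_stab} and then verify its hypothesis by a short local analysis of the stability polynomial near the origin. As recorded in the list of methods preceding the corollary, every $s$-stage, $s$-th order ERK has the same stability polynomial, namely the truncated exponential $p_s(z)=\sum_{k=0}^s z^k/k!$ (this is forced because $p_s$ has degree at most $s$ yet must agree with $e^z$ through order $s$). Hence it suffices to produce a single $\varepsilon_0>0$ with $\mathcal{D}^-(\varepsilon_0)\subseteq\mathcal{S}$; that is, $\abs{p_s(z)}\le1$ whenever $\oname{Re}\,z<0$ and $\abs{z}<\varepsilon_0$. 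Theorem~\ref{thm:full_adv_stab} then delivers conditional stability.

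I would establish this inclusion from two local facts. The first is a bound on the imaginary axis: expanding directly, one finds $\abs{p_3(iy)}^2=1-\tfrac{1}{12}y^4+O(y^6)$ and $\abs{p_4(iy)}^2=1-\tfrac{1}{72}y^6+O(y^8)$, both of which are $\le1$ for all sufficiently small $\abs{y}$. The second is a monotonicity statement in the real direction: since $p_s(0)=p_s'(0)=1$ and $p_s$ is a polynomial, the continuous function $\oname{Re}\bigl(p_s'(z)/p_s(z)\bigr)$ equals $1$ at the origin, so (after shrinking to a disc $\abs{z}<\delta$ on which $p_s$ does not vanish) it exceeds $\tfrac12$ there.

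These combine cleanly. For a point $z=x+iy$ with $x<0$, the segment $\{t+iy:\;x\le t\le0\}$ has modulus at most $\abs{z}$, so it stays inside the disc of radius $\abs{z}$; writing $\log\abs{p_s(x+iy)}=\log\abs{p_s(iy)}+\int_0^x\oname{Re}\bigl(p_s'/p_s\bigr)(t+iy)\,dt$ and using that the integrand is at least $\tfrac12$ while the interval of integration is traversed in the negative direction shows the integral is at most $-\tfrac12\abs{x}\le0$. Together with $\log\abs{p_s(iy)}\le0$ this yields $\abs{p_s(x+iy)}\le1$. Choosing $\varepsilon_0$ smaller than both $\delta$ and the imaginary-axis threshold gives $\mathcal{D}^-(\varepsilon_0)\subseteq\mathcal{S}$, and the corollary follows from Theorem~\ref{thm:full_adv_stab}.

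The main obstacle is the imaginary-axis estimate, whose leading order changes from $y^4$ for $s=3$ to $y^6$ for $s=4$; the genuine subtlety is the vanishing of the $y^4$ coefficient when $s=4$ (reflecting $\oname{Re}\,(iy)^5=0$), which forces one to carry the expansion one order further in that case. Everything else — the positivity of $\oname{Re}\bigl(p_s'/p_s\bigr)$ near the origin and the one-dimensional integration argument — is routine once these finite computations are in hand.
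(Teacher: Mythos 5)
Your proof is correct, and it shares the paper's overall strategy: identify $p_s(z)=\sum_{k=0}^s z^k/k!$ as the common stability polynomial of all $s$-stage, $s$-th order ERKs with $s=3,4$, reduce the corollary to Theorem~\ref{thm:full_adv_stab}, and verify its hypothesis $\mathcal{D}^-(\varepsilon_0)\subseteq\mathcal{S}$ by a local analysis of $p_s$ at the origin. Where you genuinely differ is in how that half-disc inclusion is established. The paper performs a single direct expansion of $\abs{p_s(x+iy)}^2$ for $x<0$, organizing it as $1+x\left(2+O(\abs{z})\right)$ minus a nonnegative quantity (essentially $\tfrac{1}{12}y^4(1-\tfrac13 y^2)$ for $s=3$ and $\tfrac{1}{72}y^6(1-\tfrac18 y^2)$ for $s=4$), so the imaginary-axis defect and the damping effect of $x<0$ appear intertwined in one computation. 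You instead factor the argument into two independent pieces: the imaginary-axis estimates $\abs{p_3(iy)}^2=1-\tfrac{1}{12}y^4+O(y^6)$ and $\abs{p_4(iy)}^2=1-\tfrac{1}{72}y^6+O(y^8)$ (the same computations that sit inside the paper's expansion), and a horizontal monotonicity statement obtained by integrating $\oname{Re}\left(p_s'/p_s\right)\ge\tfrac12$ along the segment from $iy$ to $x+iy$; the integration step is sound, since $\frac{d}{dt}\log\abs{p_s(t+iy)}=\oname{Re}\left(p_s'/p_s\right)(t+iy)$ wherever $p_s\ne0$, and the whole segment stays inside the disc where both bounds hold. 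Your route buys a cleaner and more reusable statement: for \emph{any} time integrator with $p_s(0)=p_s'(0)=1$, the inclusion $\mathcal{D}^-(\varepsilon_0)\subseteq\mathcal{S}$ follows from the imaginary-axis bound alone, with no bookkeeping of how $x$ enters the expansion -- which is exactly the mechanism one would want when extending the corollary to other schemes. The paper's version, by contrast, is shorter and entirely self-contained within a finite computation. Both proofs correctly isolate the delicate point you flag, namely that the leading defect degrades from $y^4$ to $y^6$ as $s$ goes from $3$ to $4$.
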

\begin{proof}
  We just need to verify that there exists a $\varepsilon_0>0$ such that for all $z\in\mathcal{D}^-(\varepsilon_0)$, $\abs{p_s(z)}<1$, where $p_s(z)=1+z+\frac{1}{2}z^2+\frac{1}{6}z^3$ or $p_s(z)=1+z+\frac{1}{2}z^2+\frac{1}{6}z^3+\frac{1}{24}z^4$.

  \medskip
  
  \noindent
  {\bf Case 1}. Let $s=3$ and $z=x+iy$ with $x<0$, one has:
  \begin{align*}
    \abs{p_s(z)}^2 
    &= \left(1+x-\frac{1}{2}y^2+xO(\abs{z})\right)^2+\left(y-\frac{1}{6}y^3+xO(\abs{z})\right)^2 \\
    &= 1 + x(2+O(\abs{z})) - \frac{1}{12}y^2\left(1-\frac{1}{3}y^2\right) < 1\;,
  \end{align*}
  for sufficient small $\abs{z}$ and $x<0$.

  \medskip

  \noindent
  {\bf Case 2}. Let $s=4$ and $z=x+iy$ with $x<0$, one similarly has:
  \begin{align*}
    \abs{p_s(z)}^2 
    &= \left(1+x-\frac{1}{2}y^2+\frac{1}{24}y^4+xO(\abs{z})\right)^2 + \left(y-\frac{1}{6}y^3+xO(\abs{z})\right)^2 \\
    &= 1 + x(2+O(\abs{z})) - \frac{1}{72}y^2\left(1-\frac{1}{8}y^2\right) < 1\;,
  \end{align*}
  for sufficiently small $\abs{z}$ and $x<0$.
\end{proof}

\section{A Partially Dissipative Wave System}
\label{sec:wave}
In this section, we extend the analysis to a partially dissipative wave system:
\begin{equation}\label{eq:wave_eqn}
  \left\{\begin{array}{l}
    v_t + p_x - \nu v_{xx} = 0\;, \\ \vspace*{-.1in} \\
    p_t + v_x = 0\;.
  \end{array}\right.
\end{equation}
This serves as a model for a common practice in many areas of fluid mechanics that a viscous stress presents in the momentum equation whereas the energy (or pressure) equation is not complemented by dissipation of heat.

The hyperbolic part of (\ref{eq:wave_eqn}) contains a right going wave $(v+p)/2$ and a left going wave $(v-p)/2$. 
To this end, we apply a left-biased FDO $\mathcal{D}_x^-$ to discretize $\partial_x(v+p)$ and a right-biased one $\mathcal{D}_x^+$ to discretize $\partial_x(v-p)$:
\begin{subequations}\label{eq:wave_semi}
  \begin{align}
    \label{eq:wave_semi_v}
    &\frac{dv_j}{dt} + \frac{1}{2}\mathcal{D}_x^-(v_j+p_j) - \frac{1}{2}\mathcal{D}_x^+(v_j-p_j) - \nu\mathcal{D}_{xx}v_j = 0 \\
    \label{eq:wave_semi_p}
    &\frac{dp_j}{dt} + \frac{1}{2}\mathcal{D}_x^-(v_j+p_j) + \frac{1}{2}\mathcal{D}_x^+(v_j-p_j) = 0\;,
  \end{align}
\end{subequations}
where the three operators $\mathcal{D}_x^-$, $\mathcal{D}_x^+$, and $\mathcal{D}_{xx}$ are respectively given by
\begin{equation}\label{eq:wave_ddo}
  \mathcal{D}_x^-v_j = \frac{1}{h}\sum_{k=-l^-}^{r^-}a_k^-v_{j+k}\;,\quad
  \mathcal{D}_x^+v_j = \frac{1}{h}\sum_{k=-l^+}^{r^+}a_k^+v_{j+k}\;,\quad
  \mathcal{D}_{xx}v_j = \frac{1}{h^2}\sum_{k=-q}^qb_kv_{j+k}\;,
\end{equation}
such that they satisfy the requirement of Theorem~\ref{thm:stab_ade} (hence $l^--r^-,r^+-l^+\in\{1,2\}$).

To write (\ref{eq:wave_semi}) in matrix form, let us define the solution vectors
\begin{equation}\label{eq:wave_sol}
  \bs{V} = [v_0,\;v_1,\;\cdots,\;v_{N-1}]\;,\quad
  \bs{P} = [p_0,\;p_1,\;\cdots,\;p_{N-1}]\;;
\end{equation}
then the ODE system is given by:
\begin{equation}\label{eq:wave_semi_mat}
  \frac{d}{dt}\left[\begin{array}{c}
    \bs{V} \\ \bs{P} 
  \end{array}\right] = 
  - \frac{1}{2h}\left[\begin{array}{cc}
    \bs{A}^--\bs{A}^+ & \bs{A}^-+\bs{A}^+ \\ \bs{A}^-+\bs{A}^+ & \bs{A}^--\bs{A}^+
  \end{array}\right]
  \left[\begin{array}{c}
    \bs{V} \\ \bs{P} 
  \end{array}\right] + 
  \frac{\nu}{h^2}\left[\begin{array}{cc}
    \bs{B} & \bs{0} \\ \bs{0} & \bs{0}
  \end{array}\right]
  \left[\begin{array}{c}
    \bs{V} \\ \bs{P} 
  \end{array}\right]\;,
\end{equation}
where $\bs{A}^-=\sum_{k=-l^-}^{r^-}a_k^-\bs{S}^k$, $\bs{A}^+=\sum_{k=-l^+}^{r^+}a_k^+\bs{S}^k$, and $\bs{B} = \sum_{k=-q}^qb_k\bs{S}^k$, with $\bs{S}$ given by (\ref{eq:prob_mat_s}).

Define the reciprocal cell Reynolds number $R=\nu/h$ as before, we want to investigate the stability of the matrix:
\begin{equation}\label{eq:wave_mat}
  \bs{M} = -\frac{1}{2}
  \left[\begin{array}{cc}
    \bs{A}^--\bs{A}^+ & \bs{A}^-+\bs{A}^+ \\
    \bs{A}^-+\bs{A}^+ & \bs{A}^--\bs{A}^+
  \end{array}\right] + 
  R\left[\begin{array}{cc}
    \bs{B} & \bs{0} \\
    \bs{0} & \bs{0}
  \end{array}\right]
\end{equation}
By assumption, both $-\bs{A}^-$ and $\bs{A}^+$ are semistable; hence the first term of (\ref{eq:wave_mat}) is also semistable following the similarity transform:
\begin{displaymath}
  -\frac{1}{2}\left[\begin{array}{cc}
    \bs{A}^--\bs{A}^+ & \bs{A}^-+\bs{A}^+ \\
    \bs{A}^-+\bs{A}^+ & \bs{A}^--\bs{A}^+
  \end{array}\right] = 
  \left[\begin{array}{cc}
    \bs{I} & \bs{I} \\ \bs{I} & -\bs{I}
  \end{array}\right] 
  \left[\begin{array}{cc}
    -\bs{A}^- & \bs{0} \\ \bs{0} & \bs{A}^+
  \end{array}\right] 
  \left[\begin{array}{cc}
    \bs{I} & \bs{I} \\ \bs{I} & -\bs{I}
  \end{array}\right]^{-1}\;.
\end{displaymath}
Thus $\bs{M}$ is the sum of a semistable matrix and a symmetric semistable matrix.
However, it is well known that the set of semistable matrices is not closed under matrix summation; to see this, the next example shows that the sum of a semistable matrix (even with semistable symmetric part) and a symmetric semistable matrix could be unstable:
\begin{displaymath}
  \left[\begin{array}{cc}
    -2 & 1/\epsilon \\ \epsilon & -2
  \end{array}\right] + 
  \left[\begin{array}{cc}
    -2 & 1 \\ 1 & -2
  \end{array}\right] = 
  \left[\begin{array}{cc}
    -4 & 1+1/\epsilon \\ 1+\epsilon & -4
  \end{array}\right]\;,
\end{displaymath}
where $\epsilon>0$ is sufficiently small.
Hence in this article, we take a different approach and show that $\bs{M}$ given by (\ref{eq:wave_mat}) is semistable for all $R>0$.

\begin{lemma}\label{lm:wave_eigs} 
  The Jordan normal form of $\bs{M}$ can be arranged into $N$ $2\times2$ blocks, each of which is (1) either diagonal with eigenvalues:
  \begin{subequations}\label{eq:wave_eigs}
    \begin{align}
      \label{eq:wave_eigs_1}
      \lambda_{k,1} &= \frac{1}{2}\left\{Rb(s_k)-\left[a^-(s_k)-a^+(s_k)\right]+\sqrt{R^2b(s_k)^2+\left[a^-(s_k)+a^+(s_k)\right]^2}\right\} \\
      \label{eq:wave_eigs_2}
      \textrm{and }\ \lambda_{k,2} &= \frac{1}{2}\left\{Rb(s_k)-\left[a^-(s_k)-a^+(s_k)\right]-\sqrt{R^2b(s_k)^2+\left[a^-(s_k)+a^+(s_k)\right]^2}\right\}\;,
    \end{align}
  \end{subequations}
  where $k$ is an integer between $1$ and $N$, and $s_k=e^{i2k\pi/N}$; or (2) a $2\times2$ Jordan block, whose eigenvalue has negative real part.
  Here the three Laurent polynomials are given by:
  \begin{equation}\label{eq:wave_lpoly}
    a^-(s) = \sum_{k=-l^-}^{r^-}a_k^-s^k\;,\quad
    a^+(s) = \sum_{k=-l^+}^{r^+}a_k^+s^k\;,\quad
    b(s) = \sum_{k=-q}^qb_ks^k\;.
  \end{equation}
  (Hence $\bs{A}^{\pm}=a^{\pm}(\bs{S})$ and $\bs{B}=b(\bs{S})$.)
\end{lemma}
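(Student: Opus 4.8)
The plan is to exploit the circulant structure exactly as in Section~\ref{sec:stab}: since $\bs{A}^-$, $\bs{A}^+$, and $\bs{B}$ are all Laurent polynomials in the single shift matrix $\bs{S}$, they are simultaneously diagonalized by the Fourier basis. I denote by $\bs{f}_k=[1,\,s_k,\,\dots,\,s_k^{N-1}]^t$ the eigenvector of $\bs{S}$ for the eigenvalue $s_k=e^{i2k\pi/N}$, so that $\bs{A}^{\pm}\bs{f}_k=a^{\pm}(s_k)\bs{f}_k$ and $\bs{B}\bs{f}_k=b(s_k)\bs{f}_k$. The first step is to observe that for each $k$ the two vectors $(\bs{f}_k,\bs{0})^t$ and $(\bs{0},\bs{f}_k)^t$ span a subspace of $\mathbb{C}^{2N}$ that is invariant under $\bs{M}$, and that these $N$ subspaces decompose $\mathbb{C}^{2N}$ because the $\bs{f}_k$ form a basis of $\mathbb{C}^{N}$. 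On the $k$-th subspace $\bs{M}$ acts as the $2\times2$ matrix
\[
  \bs{M}_k=\begin{bmatrix}
    -\tfrac12\bigl(a^-(s_k)-a^+(s_k)\bigr)+Rb(s_k) & -\tfrac12\bigl(a^-(s_k)+a^+(s_k)\bigr)\\[2pt]
    -\tfrac12\bigl(a^-(s_k)+a^+(s_k)\bigr) & -\tfrac12\bigl(a^-(s_k)-a^+(s_k)\bigr)
  \end{bmatrix},
\]
so that the whole question reduces to the Jordan structure of these $N$ matrices.

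Next I would compute the spectrum of $\bs{M}_k$. Its trace is $Rb(s_k)-\bigl(a^-(s_k)-a^+(s_k)\bigr)$, and a short determinant computation shows that the discriminant of its characteristic polynomial is exactly $R^2b(s_k)^2+\bigl(a^-(s_k)+a^+(s_k)\bigr)^2$; hence the two eigenvalues are precisely the $\lambda_{k,1}$ and $\lambda_{k,2}$ of~(\ref{eq:wave_eigs}). Since a $2\times2$ matrix fails to be diagonalizable only when it has a repeated eigenvalue while not being a scalar multiple of the identity, the only mechanism producing a genuine $2\times2$ Jordan block is a vanishing discriminant together with non-scalar $\bs{M}_k$.

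The third step is the case split governed by $s_k$. For $s_k\neq1$ the function $b$ is real and strictly negative (Lemma~\ref{lm:stab_dxx}), so $Rb(s_k)\neq0$ and the two diagonal entries of $\bs{M}_k$ always differ; hence $\bs{M}_k$ is \emph{never} scalar. Consequently, for $s_k\neq1$, if the discriminant is nonzero the block has two distinct eigenvalues and is diagonal, whereas if the discriminant vanishes it is necessarily a single $2\times2$ Jordan block. At $s_k=1$ one has $a^-(1)=a^+(1)=b(1)=0$, so $\bs{M}_1=\bs{0}$ is diagonal with the regular double eigenvalue $0$; in particular the defective case cannot occur there. This confines every genuine Jordan block to indices with $s_k\neq1$.

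It remains to show that the eigenvalue of each such defective block has negative real part, which is the crux of the argument. For a Jordan block the repeated eigenvalue equals half the trace,
\[
  \lambda=\tfrac12\bigl[\,Rb(s_k)-\bigl(a^-(s_k)-a^+(s_k)\bigr)\bigr].
\]
I would bound its real part using the three ingredients already established: $b(s_k)$ is real with $Rb(s_k)<0$ for $s_k\neq1$ (Lemma~\ref{lm:stab_dxx}), while the stated semistability of $-\bs{A}^-$ and of $\bs{A}^+$ gives $\oname{Re}\,a^-(s_k)\ge0$ and $\oname{Re}\,a^+(s_k)\le0$. Therefore
\[
  \oname{Re}\,\lambda=\tfrac12\bigl[\,Rb(s_k)-\oname{Re}\,a^-(s_k)+\oname{Re}\,a^+(s_k)\bigr]<0,
\]
the strict inequality coming from $Rb(s_k)<0$ alone. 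The only mildly delicate point is thus to confine the defective case to $s_k\neq1$ so that the strict sign of $Rb(s_k)$ may be invoked; once that is secured the negativity is immediate and the lemma follows.
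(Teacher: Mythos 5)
Your proposal is correct and follows essentially the same route as the paper: block-diagonalize $\bs{M}$ via the Fourier eigenvectors of $\bs{S}$ into the $2\times2$ matrices $\bs{M}_k$, read off the eigenvalues from trace and discriminant, and treat the defective (Jordan-block) case separately using Lemmas~\ref{lm:stab_dx} and~\ref{lm:stab_dxx}. The only differences are cosmetic and slightly to your credit: where the paper uses the vanishing discriminant to deduce $\oname{Re}\left(a^-(s_k)+a^+(s_k)\right)=0$ and then writes $\oname{Re}\,\lambda=\tfrac12\left[Rb(s_k)+\oname{Re}\left(-2a^-(s_k)\right)\right]<0$, you bound $\oname{Re}\,\lambda\le\tfrac12 Rb(s_k)<0$ directly from the separate sign conditions on $\oname{Re}\,a^{\pm}(s_k)$, and your explicit observation that the defective case cannot occur at $s_k=1$ (where $\bs{M}_N=\bs{0}$ is diagonal), so that $b(s_k)<0$ may be invoked strictly, makes precise a point the paper leaves implicit.
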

\begin{proof}
  Clearly $s_k\;,\ 1\le k\le N$ are the distinct eigenvalues of $\bs{S}$; and we can assume the corresponding eigenvectors are $\bs{U}_k\in\mathbb{C}^N$.
  For any $s_k$, we define a $2\times2$ complex matrix:
  \begin{equation}\label{eq:wave_redmat}
    \bs{M}_k = \left[\begin{array}{cc}
      m_{k,11} & m_{k,12} \\ m_{k,21} & m_{k,22}
    \end{array}\right] \eqdef 
    \left[\begin{array}{cc} 
      Rb(s_k)-\frac{1}{2}[a^-(s_k)-a^+(s_k)] & -\frac{1}{2}[a^-(s_k)+a^+(s_k)] \\
      -\frac{1}{2}[a^-(s_k)+a^+(s_k)] & -\frac{1}{2}[a^-(s_k)-a^+(s_k)]
    \end{array}\right]\;
  \end{equation}
  then it is not difficult to verify that it has two eigenvalues $\lambda_{k,1}$ and $\lambda_{k,2}$ given by~(\ref{eq:wave_eigs}).
  Let $\bs{M}_k = \bs{V}_k\bs{J}_k\bs{V}_k^{-1}$ where $\bs{J}_k$ is the Jordan normal form of $\bs{M}_k$ and denote $\bs{V}_k=[v_{k,ij}]_{1\le i,j\le2}$, then by direct computation:
  \begin{align*}
    \bs{M}(\bs{V}_k\otimes\bs{U}_k) &= 
    \left[\begin{array}{cc} 
      Rb(\bs{S})-\frac{1}{2}[a^-(\bs{S})-a^+(\bs{S})] & -\frac{1}{2}[a^-(\bs{S})+a^+(\bs{S})] \\
      -\frac{1}{2}[a^-(\bs{S})+a^+(\bs{S})] & -\frac{1}{2}[a^-(\bs{S})-a^+(\bs{S})]
    \end{array}\right] \left[\begin{array}{cc}
      v_{k,11}\bs{U}_k & v_{k,12}\bs{U}_k \\
      v_{k,21}\bs{U}_k & v_{k,22}\bs{U}_k
    \end{array}\right] \\
    &= \left[\begin{array}{cc}
      (m_{k,11}v_{k,11}+m_{k,12}v_{k,21})\bs{U}_k & (m_{k,11}v_{k,12}+m_{k,12}v_{k,22})\bs{U}_k \\
      (m_{k,21}v_{k,11}+m_{k,22}v_{k,21})\bs{U}_k & (m_{k,21}v_{k,12}+m_{k,22}v_{k,22})\bs{U}_k
    \end{array}\right] \\
    &= (\bs{M}_k\bs{V}_k)\times\bs{U}_k = (\bs{V}_k\bs{J}_k)\otimes\bs{U}_k 
     = (\bs{V}_k\otimes\bs{U}_k)\bs{J}_k\;.
  \end{align*}
  Here we used the fact that $a^-(\bs{S})\bs{U}_k=a^-(s_k)\bs{U}_k$, $a^+(\bs{S})\bs{U}_k=a^+(s_k)\bs{U}_k$, and $b(\bs{S})\bs{U}_k=b(s_k)\bs{U}_k$.
  Hence, the Jordan normal form of $\bs{M}$ is composed of diagonal blocks $\bs{J}_1\,,\cdots\,,\bs{J}_N$.
  Now we focus on each such block $\bs{J}_k$.

  \smallskip

  \noindent
  {\bf Case 1: $\bs{J}_k$ is diagonal.}
  From $\bs{M}(\bs{V}_k\otimes\bs{U}_k)=(\bs{V}_k\otimes\bs{U}_k)\bs{J}_k$, we see immediately that $\lambda_{k,1}$ and $\lambda_{k,2}$ are eigenvalues of $\bs{M}$ with eigenvectors $\bs{V}_{k,1}\otimes\bs{U}_k$ and $\bs{V}_{k,2}\otimes\bs{U}_k$, respectively.
  Note that this also includes the case when $k=N$, i.e., $s_N=1$ and $\bs{M}_N$ is the zero matrix.

  \smallskip

  \noindent
  {\bf Case 2: $\bs{J}_k$ is a $2\times2$ Jordan block.}
  In this case, it is necessary $\lambda_{k,1}=\lambda_{k,2}$ and hence $R^2b(s_k)^2+[a^-(s_k)+a^+(s_k)]^2=0$.
  By Lemma~\ref{lm:stab_dxx}, $b(s_k)$ is a negative real number; hence $\oname{Re}(a^-(s_k)+a^+(s_k))=0$.
  To this end:
  \begin{displaymath}
    \oname{Re}\,\lambda_{k,1}=\oname{Re}\,\lambda_{k,2} 
    = \frac{1}{2}\left\{Rb(s_k)+\oname{Re}\,[-a^-(s_k)+a^+(s_k)]\right\}
    = \frac{1}{2}\left\{Rb(s_k)+\oname{Re}\,[-2a^-(s_k)]\right\} < 0\;,
  \end{displaymath}
  where we used in addition that $\oname{Re}(-a^-(s_k))<0$ by Lemma~\ref{lm:stab_dx}.
\end{proof}

Similar as in the ADE case, we define the set $\Lambda(R)$:
\begin{equation}\label{eq:wave_traj}
  \Lambda(R) = \left\{\frac{1}{2}\left[Rb(s)-a^-(s)+a^+(s)\pm\sqrt{R^2b(s)^2+[a^-(s)+a^+(s)]^2}\right]:\;\abs{s}=1\right\}\;,
\end{equation}
then all eigenvalues of $\bs{M}$ are on the trajectory defined by $\Lambda(R)$.
Now we are in a position of showing that the semi-discretization (\ref{eq:wave_semi}) is always stable.

\begin{theorem}\label{thm:wave_stab}
  The matrix $\bs{M}$ given by (\ref{eq:wave_mat}) is semistable for all $R>0$.
\end{theorem}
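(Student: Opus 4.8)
The plan is to leverage the block reduction already established in Lemma~\ref{lm:wave_eigs} and then to control each $2\times2$ block through its Hermitian part, thereby sidestepping any explicit estimate of the square root appearing in (\ref{eq:wave_eigs}). Recall that $\bs{M}$ is similar to the block-diagonal matrix assembled from the $2\times2$ blocks $\bs{M}_k$ of (\ref{eq:wave_redmat}), one for each $s_k=e^{i2k\pi/N}$. The block corresponding to $s_N=1$ is the zero matrix, since $a^-(1)=a^+(1)=b(1)=0$; being diagonal, it contributes only regular zero eigenvalues. In view of Definition~\ref{def:prob_semistab}, it then remains to show that for every $s_k\ne1$ all eigenvalues of $\bs{M}_k$ lie in the open left half-plane.

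First I would fix $s=s_k\ne1$ and abbreviate $\alpha=a^-(s)$, $\beta=a^+(s)$, and $b=b(s)$. The three sign facts I will use are exactly those furnished by the earlier results: because $-\bs{A}^-$ and $\bs{A}^+$ are semistable by Lemma~\ref{lm:stab_dx} (applied to $\mathcal{D}_x^-$ and, in mirrored form, to $\mathcal{D}_x^+$), one has $\oname{Re}\,\alpha>0$ and $\oname{Re}\,\beta<0$, while Lemma~\ref{lm:stab_dxx} gives that $b$ is real and strictly negative. Rather than diagonalize $\bs{M}_k$, I would pass to its Hermitian part $H_k=\tfrac12(\bs{M}_k+\bs{M}_k^\ast)$; since any eigenvalue $\lambda$ of $\bs{M}_k$ satisfies $\oname{Re}\,\lambda=\bs{v}^\ast H_k\bs{v}$ for an associated unit eigenvector $\bs{v}$, it suffices to prove that $H_k$ is negative definite.

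The key computation is that, writing $c=\oname{Re}(\alpha-\beta)$ and $d=\oname{Re}(\alpha+\beta)$, the Hermitian part takes the form
\begin{equation*}
  H_k = \left[\begin{array}{cc} Rb-\tfrac12 c & -\tfrac12 d \\ -\tfrac12 d & -\tfrac12 c \end{array}\right]\;,
\end{equation*}
so negative definiteness is equivalent to $(H_k)_{11}<0$ together with $\det H_k>0$. The first condition is immediate, since $c=\oname{Re}\,\alpha-\oname{Re}\,\beta>0$ and $Rb<0$. For the determinant I would expand
\begin{equation*}
  \det H_k = -\tfrac12 c\,Rb + \tfrac14\bigl(c^2-d^2\bigr)\;,
\end{equation*}
and observe that $c^2-d^2=(c-d)(c+d)=(-2\oname{Re}\,\beta)(2\oname{Re}\,\alpha)=-4\,\oname{Re}\,\alpha\,\oname{Re}\,\beta>0$; since the term $-\tfrac12 c\,Rb$ is also strictly positive, I conclude $\det H_k>0$, so $H_k$ is negative definite. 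This confines every eigenvalue of $\bs{M}_k$ to the open left half-plane. Collecting the blocks, the indices $k\ne N$ contribute only eigenvalues with negative real part while the $s_N=1$ block contributes regular zeros, which is precisely the semistability of $\bs{M}$.

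I expect the only delicate point to be the determinant inequality, where the three sign facts from Lemmas~\ref{lm:stab_dx} and~\ref{lm:stab_dxx} must be combined in the right order; once $c^2-d^2$ is factored as $-4\,\oname{Re}\,\alpha\,\oname{Re}\,\beta$ the estimate is immediate. The chief advantage of this route over a direct bound on $\oname{Re}\sqrt{R^2b^2+(\alpha+\beta)^2}$ is that the Hermitian-part criterion dispenses entirely with the sign of the discriminant, and therefore treats the diagonalizable and the $2\times2$ Jordan-block cases of Lemma~\ref{lm:wave_eigs} in a single uniform stroke.
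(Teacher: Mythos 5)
Your proof is correct, but it takes a genuinely different route from the paper's. Both arguments start from the block reduction in Lemma~\ref{lm:wave_eigs} and the same three sign facts ($b(s)<0$, $\oname{Re}\,a^-(s)>0$, $\oname{Re}\,a^+(s)<0$ for $\abs{s}=1$, $s\ne1$), but from there the paper works directly with the explicit eigenvalue formula (\ref{eq:wave_eigs}): it must bound $\pm\oname{Re}\sqrt{R^2b^2+(a^-+a^+)^2}$, which it does by squaring the inequality twice and then verifying that all four coefficients of the resulting polynomial in $Rb$ are positive --- a somewhat delicate computation. You instead invoke the Bendixson (field-of-values) principle: since $\bs{M}_k$ is complex symmetric, its Hermitian part is exactly its entrywise real part, namely the real symmetric $2\times2$ matrix you display, and $\oname{Re}\,\lambda=\bs{v}^\ast H_k\bs{v}$ for a unit eigenvector $\bs{v}$, so negative definiteness of $H_k$ --- checked via $(H_k)_{11}<0$ and $\det H_k>0$, where the factorization $c^2-d^2=(c-d)(c+d)=-4\,\oname{Re}\,a^-\,\oname{Re}\,a^+>0$ does all the real work --- confines every eigenvalue of $\bs{M}_k$ to the open left half-plane. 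Your route is shorter, dispenses entirely with the square root and the equivalence-under-squaring bookkeeping, and, as you note, treats the diagonalizable and defective cases of Lemma~\ref{lm:wave_eigs} in one stroke, so that case (2) of that lemma becomes redundant for this theorem. What the paper's heavier computation buys in exchange is explicit control of the two eigenvalue branches themselves, which the paper reuses afterwards (the trajectory (\ref{eq:wave_traj}) and Theorems~\ref{thm:wave_sym} and~\ref{thm:wave_sym_h} are phrased in terms of exactly those formulas); your argument establishes semistability without producing any such formula. As a proof of Theorem~\ref{thm:wave_stab} alone, yours is arguably the cleaner of the two.
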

\begin{proof}
  By Lemma~\ref{lm:wave_eigs}, it suffices to show that for all $s$ such that $\abs{s}=1$ and $s\ne1$, there is:
  \begin{equation}\label{eq:wave_stab_negre}
    \oname{Re}\left[Rb(s)-[a^-(s)-a^+(s)]\pm\sqrt{R^2b(s)^2+[a^-(s)+a^+(s)]^2}\right] < 0\;.
  \end{equation}
  Note that by the (stable) choice of the discrete differential operators, we have:
  \begin{displaymath}
    b(s) < 0\;,\quad
    \oname{Re}(-a^-(s)) < 0\;,\quad\textrm{ and }\quad
    \oname{Re}( a^+(s)) < 0\;;
  \end{displaymath}
  hence (\ref{eq:wave_stab_negre}) is equivalent to (we suppress the dependence on $s$ for simplicity and use overbar to denote the complex conjugate):
  \begin{align*}
    &\pm\oname{Re}\sqrt{R^2b^2+(a^-+a^+)^2} < -Rb + \oname{Re}\,(a^--a^+) \\
    \Longleftrightarrow\quad&
    \left(\sqrt{R^2b^2+(a^-+a^+)^2}+\sqrt{R^2b^2+(\overline{a^-}+\overline{a^+})^2}\right)^2 < \left[-2Rb+2\oname{Re}\,(a^--a^+)\right]^2 \\
    \Longleftrightarrow\quad& \sqrt{R^4b^4+2R^2b^2\oname{Re}(a^-+a^+)^2+\abs{a^-+a^+}^4} \\
    <\ &\ R^2b^2 - 4Rb\oname{Re}(a^--a^+) + R^2\left[2\left(\oname{Re}(a^--a^+)\right)^2-\oname{Re}(a^-+a^+)^2\right]\;.
  \end{align*}
  For easier calculation, the latest inequality is rewritten:
  \begin{equation}\label{eq:wave_stab_to_prove}
    \sqrt{R^4b^4 + C_1R^2b^2 + C_2} < R^2b^2 + D_1Rb + D_2\;,
  \end{equation}
  where:
  \begin{align*}
     &C_1 = 2\oname{Re}(a^-+a^+)^2\;, 
    &&C_2 = \abs{a^-+a^+}^4 \ge 0\;, \\
     &D_1 = -4\oname{Re}(a^--a^+) < 0\;, 
    &&D_2 = 2\left(\oname{Re}(a^--a^+)\right)^2-\oname{Re}(a^-+a^+)^2\;.
  \end{align*}
  Taking the square of both sides of (\ref{eq:wave_stab_to_prove}), we obtain the equivalent inequality:
  \begin{equation}\label{eq:wave_stab_polyr}
    0 < 2D_1R^3b^3 + (D_1^2+2D_2-C_1)R^2b^22 + 2D_1D_2Rb + (D_2^2-C_2)\;.
  \end{equation}
  In what follows, we show that all coefficients of this $R$-polynomial are positive:
  \begin{enumerate}[i)]
    \item $2D_1R^3b^3$. 
      The coefficient is clearly positive since $b<0$ and $D_1<0$.
    \item $2D_1D_2Rb$. It suffices to show $D_2>0$; to this end, let us write $a^-=E_1+iE_2$ and $a^+=F_1+iF_2$, where $E_{1,2}, F_{1,2}\in\mathbb{R}$ (so $E_1>0$ and $F_1<0$) and compute:
      \begin{displaymath}
        D_2 = (E_1+F_1)^2-8E_1F_1+(E_2+F_2)^2 > 0\;.
      \end{displaymath}
    \item $(D_1^2+2D_2-C_1)R^2b^2$.
      Noticing that $D_2 = \frac{1}{8}D_1^2-\frac{1}{2}C_1$, the positivity of the coefficient comes from:
      \begin{displaymath}
        D_1^2+2D_2-C_1 = D_1^2+2D_2-2\left(\frac{1}{8}D_1^2-D_2\right) = \frac{3}{4}D_1^2+4D_2 > 0\;.
      \end{displaymath}
    \item $(D_2^2-C_2)$. Following (ii):
      \begin{displaymath}
        D_2 = \abs{a^-+a^+}^2-8E_1F_1 > \abs{a^-+a^+}^2\;.
      \end{displaymath}
      Thus $D_2^2 > \abs{a^-+a^+}^4 = C_2$.
  \end{enumerate}
\end{proof}

In the second half of this section, we establish similar bounds on the trajectory $\Lambda(R)$ as in the ADE case.
For the general combination of $\mathcal{D}_x^-$, $\mathcal{D}_x^+$, and $\mathcal{D}_{xx}$, such a bound is difficult to establish, as in the limit $R\to+\infty$, half of the eigenvalues converge to zero.
Note that in practice, the same discretization technique is frequently applied to waves in both directions.
To this end, we consider a special case when $\mathcal{D}_x^-$ and $\mathcal{D}_x^+$ are {\it symmetric}, that is, $l^-=r^+$, $r^-=l^+$, and thusly $a_k^-+a_{-k}^+=0$ for all $-l^-=-r^+\le k\le r^-=l^+$.
And we obtain a similar bound as in Theorem~\ref{thm:stab_bound}, which is given below.
\begin{theorem}\label{thm:wave_sym}
  Suppose $\mathcal{D}_x^-$ and $\mathcal{D}_x^+$ are symmetric, then there exists a constant $L>0$ that is determined by $\mathcal{D}_x^{\pm}$ and $\mathcal{D}_{xx}$ such that for all $x+iy\in\Lambda(R)$, there is $x\le-RL\abs{y}^2$.
\end{theorem}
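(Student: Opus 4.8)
The plan is to exploit the symmetry hypothesis to collapse the two Laurent polynomials $a^\pm(s)$ into one, after which the trajectory (\ref{eq:wave_traj}) simplifies dramatically. Writing $s=e^{i\theta}$ and using that the coefficients $a_k^-$ are real, the relation $a_k^-+a_{-k}^+=0$ becomes, on the unit circle, $a^+(s)=-\overline{a^-(s)}$. Hence, setting $a^-(s)=E_1+iE_2$ with $E_1,E_2\in\mathbb{R}$ exactly as in the proof of Theorem~\ref{thm:wave_stab} (so that $a^+=-E_1+iE_2$), one obtains $a^-(s)-a^+(s)=2E_1$ and $a^-(s)+a^+(s)=2iE_2$, whence $[a^-(s)+a^+(s)]^2=-4E_2^2$. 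Substituting into (\ref{eq:wave_traj}) yields the clean form
\[
  \lambda_\pm=\tfrac12\Big[Rb(s)-2E_1\pm\sqrt{R^2b(s)^2-4E_2^2}\Big],
\]
where $b(s)<0$ for $s\neq1$ by Lemma~\ref{lm:stab_dxx} and $E_1=\oname{Re}\,a^-(s)>0$ for $s\neq1$ by Lemma~\ref{lm:stab_dx}. The decisive gain is that the discriminant $R^2b(s)^2-4E_2^2$ is real.

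Next I would split according to the sign of the discriminant. When $R^2b^2\ge4E_2^2$ the eigenvalues are real, so $y=0$ and the claim reduces to $x\le0$; since $\sqrt{R^2b^2-4E_2^2}\le\abs{Rb}=-Rb$, the larger root obeys $\lambda_+\le\tfrac12(Rb-2E_1-Rb)=-E_1<0$, and $\lambda_-$ is smaller, so both lie in the open left half-plane. When $R^2b^2<4E_2^2$ the roots are complex conjugates with
\[
  x=\tfrac12 Rb-E_1,\qquad y^2=E_2^2-\tfrac14R^2b^2 .
\]
Then $-x=\tfrac12 R\abs{b}+E_1$ and $y^2\le E_2^2$, so it suffices to produce an $L>0$, independent of $R$ and $s$, with $\tfrac12\abs{b(s)}\ge L\,E_2^2$: this gives $-x\ge\tfrac12 R\abs{b}\ge RL\,E_2^2\ge RL\,y^2$, which is precisely the asserted inequality.

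Everything therefore reduces to a single pointwise estimate on the circle, $\tfrac12\abs{b(s)}\ge L(\oname{Im}\,a^-(s))^2$ for all $\abs{s}=1$; equivalently, I must show the function $\Phi(s)=(\oname{Im}\,a^-(s))^2/\abs{b(s)}$ is bounded on $\{\abs{s}=1\}\setminus\{1\}$, after which any $L\le1/(2\sup\Phi)$ works. Away from $s=1$ this is immediate, because $\abs{b}$ is continuous and bounded below by a positive constant there (Lemma~\ref{lm:stab_dxx}) while $\oname{Im}\,a^-$ is bounded. The only delicate point is the behavior as $s\to1$, where numerator and denominator both vanish. Here I would invoke the near-origin asymptotics recorded in the proof of Theorem~\ref{thm:stab_asym}: $\oname{Im}\,a^-(e^{i\theta})=\theta+O(\theta^{3})$ and $\abs{b(e^{i\theta})}=\theta^2+O(\theta^{4})$ as $\theta\to0$, so $\Phi(e^{i\theta})\to1$ and $\Phi$ extends continuously to the whole compact unit circle. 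Boundedness of $\Phi$, and hence the existence of $L$, follows.

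The main obstacle is exactly this last step: securing the uniform-in-$s$ lower bound for $\abs{b(s)}/(\oname{Im}\,a^-(s))^2$, i.e.\ controlling the indeterminate $0/0$ ratio as $s\to1$. This is where the leading scalings $b\sim-\theta^2$ (diffusion) and $\oname{Im}\,a^-\sim\theta$ (advection) established earlier are essential to guarantee the ratio stays finite; by contrast the compactness argument on the rest of the circle, and the two-case analysis above, are routine.
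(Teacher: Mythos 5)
Your proof is correct and follows essentially the same route as the paper's: use the symmetry to reduce the eigenvalues to $\tfrac12\bigl(Rb-2E_1\pm\sqrt{R^2b^2-4E_2^2}\bigr)$ with real discriminant, split on its sign, dispose of the real case by $x\le 0$, and in the complex-conjugate case bound $y^2\le E_2^2$ and $-x\ge\tfrac12 R\abs{b}$, so that everything reduces to the uniform bound $\abs{b(s)}\gtrsim(\oname{Im}\,a^-(s))^2$ on the circle. The only cosmetic difference is that the paper gets this last bound by comparing both sides to $\theta^2$ via the constants $L_1,L_2$ recycled from the proof of Theorem~\ref{thm:stab_bound}, whereas you bound the ratio $(\oname{Im}\,a^-)^2/\abs{b}$ directly by continuous extension at $s=1$ plus compactness --- the same ingredients (the $\theta$ versus $\theta^2$ scalings near $s=1$) in a slightly repackaged form.
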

\begin{proof}
  Let $a^-(s)=x_0^-(\theta)+iy_0^-(\theta)$ and $a^+(s)=x_0^+(\theta)+iy_0^+(\theta)$; then due to the symmetry we have $x^-_0(\theta)=-x^+_0(\theta)$ and $y_0^-(\theta)=y_0^+(\theta)$; thusly:
  \begin{displaymath}
    a^-(s) - a^+(s) = 2x_0^-(\theta)\;,\quad\textrm{ and }\quad
    a^-(s) + a^+(s) = 2iy_0^-(\theta)\;.
  \end{displaymath}
  Using in addition $b(s)=x_\infty(\theta)$, any element $x(\theta)+iy(\theta)$ of $\Lambda(R)$ can be written as:
  \begin{equation}\label{eq:wave_sym_eig}
    x(\theta)+iy(\theta) = \frac{1}{2}\left(Rx_\infty(\theta)-2x_0^-(\theta)\pm\sqrt{R^2x_\infty(\theta)^2-4y_0^-(\theta)^2}\right)\;.
  \end{equation}
  By the construction of the FDOs and previous results, one has $x_\infty(\theta)\le0$ and $-x_0^-(\theta)\le0$.
  To proceed, given any $\theta\in[-\pi,\;\pi]$ we distinguish between two scenarioes.

  \smallskip

  \noindent
  {\bf Case 1: $R^2x_\infty(\theta)^2-4y_0^-(\theta)^2\ge0$.}
  In this case, $y(\theta)=0$ and
  \begin{align*}
    x(\theta) 
    &= \frac{1}{2}\left(Rx_\infty(\theta)-2x_0^-(\theta)\pm\sqrt{R^2x_\infty(\theta)^2-4y_0^-(\theta)^2}\right) \\
    &\le \frac{1}{2}\left(Rx_\infty(\theta)-2x_0^-(\theta)+\sqrt{R^2x_\infty(\theta)^2}\right) = -x_0^-(\theta)\le0 = -RL\abs{y(\theta)}^2\;,
  \end{align*}
  for any positive number $L$.

  \smallskip

  \noindent
  {\bf Case 2: $R^2x_\infty(\theta)^2-4y_0^-(\theta)^2<0$.}
  In this case:
  \begin{displaymath}
    x(\theta) = \frac{1}{2}\left(Rx_\infty(\theta)-2x_0^-(\theta)\right)\quad\textrm{ and }\quad
    \abs{y(\theta)} = \frac{1}{2}\sqrt{4y_0^-(\theta)^2-R^2x_\infty(\theta)^2}\;.
  \end{displaymath}
  Following the proof of Theorem~\ref{thm:stab_bound}, there exists an $L_1>0$ such that $y_0^-(\theta)^2\le L_1\theta^2$ for all $\theta\in[-\pi,\;\pi]$ and an $L_2>0$ such that $x_\infty(\theta)\le -L_2\theta^2$.
  To this end, we have:
  \begin{displaymath}
    x(\theta) \le \frac{1}{2}Rx_\infty(\theta) \le -\frac{RL_2}{2}\theta^2\quad\textrm{ and }\quad
    \abs{y(\theta)}^2 \le y_0^-(\theta)^2 \le L_1\theta^2\;.
  \end{displaymath}
  Thus the desired estimate is established with $L_2/(2L_1)$.
\end{proof}

Using the same argument as in the proof of Theorem~\ref{thm:full_hord}, we obtain the following conditional stability result:
\begin{theorem}\label{thm:wave_sym_hord}
  We consider the full discretization of (\ref{eq:wave_eqn}) combining a symmetric pair of $\mathcal{D}_x^-$ and $\mathcal{D}_x^+$ and $\mathcal{D}_{xx}$ with an explicit Runge-Kutta method with order $p\ge1$ in the context of method of lines.
  Then there exist positive numbers $\alpha_0$, $\beta_0$, and $\gamma_0$, which are determined by $\mathcal{D}_x^{\pm}$, $\mathcal{D}_{xx}$, and the chosen time-integrator, such that for all $\delta t>0$ satisfying:
  \begin{equation}\label{eq:wave_sym_cfl}
    \delta t < \nu\gamma_0\quad\textrm{ and }\quad 
    \left(\alpha_0+\frac{\nu\beta_0}{h}\right)\frac{\delta t}{h} < 1\;,
  \end{equation}
  the fully-discretized method is stable.
\end{theorem}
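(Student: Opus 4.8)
The plan is to follow the proof of Theorem~\ref{thm:full_hord} almost verbatim, the only genuinely new ingredient being a global boundedness statement for the wave trajectory $\Lambda(R)$ of (\ref{eq:wave_traj}) to complement the parabolic envelope $x\le-RL\abs{y}^2$ already furnished by Theorem~\ref{thm:wave_sym}. First I would record, using the symmetric reduction (\ref{eq:wave_sym_eig}) from that proof, that there exist positive constants $Y_0$, $X_0$, and $X_1$ depending only on $\mathcal{D}_x^{\pm}$ and $\mathcal{D}_{xx}$ such that $\abs{y}<Y_0$ and $\abs{x}<X_0+RX_1$ for every $x+iy\in\Lambda(R)$. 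Both bounds are immediate from (\ref{eq:wave_sym_eig}): the imaginary part satisfies $\abs{y}\le\abs{y_0^-(\theta)}$ (it vanishes when $R^2x_\infty^2-4(y_0^-)^2\ge0$ and is dominated by $\abs{y_0^-}$ otherwise), and $\abs{y_0^-(\theta)}$ is a trigonometric polynomial bounded independently of $R$; similarly the real part is controlled by $\abs{x_0^-(\theta)}+R\abs{x_\infty(\theta)}$ once the square-root term is estimated by $R\abs{x_\infty(\theta)}$, so the bound has the advertised affine-in-$R$ form. Since by Lemma~\ref{lm:wave_eigs} all eigenvalues of $\bs{M}$ lie on $\Lambda(R)$, these three properties apply to the spectrum.

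With the parabolic envelope $x\le-RL\abs{y}^2$, the imaginary bound $\abs{y}<Y_0$, and the real bound $\abs{x}<X_0+RX_1$ in hand, the situation is identical to that faced in the proof of Theorem~\ref{thm:full_hord}. Scaling by $\mu=\delta t/h$ and using $R=\nu/h$, any $x+iy\in\mu\Lambda(R)$ obeys $x\le-(\nu L/\delta t)y^2$, $\abs{x}<\delta t(X_0+RX_1)/h$, and $\abs{y}<\delta tY_0/h$. I would then invoke verbatim the geometric fact already established there: for an ERK of order $p\ge1$ there exist $\varepsilon_0>0$ and $M_0>0$ such that the parabolic set $\mathcal{D}(\varepsilon_0,M_0)$ of (\ref{eq:full_quadset}) lies in the open stability region $\{\abs{p_s(z)}<1\}$. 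That construction used only $p_s(z)=1+z+C(z)z^2$ with $C$ bounded near the origin, hence is completely insensitive to whether the eigenvalues come from the scalar ADE or from the wave system. Choosing $\alpha_0=\max(X_0/(M_0\varepsilon_0^2),Y_0/\varepsilon_0)$, $\beta_0=X_1/(M_0\varepsilon_0^2)$, and $\gamma_0=L/M_0$ then forces, exactly as before, conditions (\ref{eq:wave_sym_cfl}) to imply $\mu\Lambda(R)\backslash\{0\}\subseteq\mathcal{D}(\varepsilon_0,M_0)$ and hence $\abs{p_s(\mu\lambda)}<1$ strictly for every nonzero eigenvalue $\lambda$ of $\bs{M}$.

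The one point requiring more care than in the scalar case --- and the step I expect to be the main obstacle --- is translating this spectral localization into genuine power-boundedness of $p_s(\mu\bs{M})$, because $\bs{M}$ in (\ref{eq:wave_mat}) need not be diagonalizable: by Lemma~\ref{lm:wave_eigs} it may carry genuine $2\times2$ Jordan blocks. For each such block the eigenvalue $\lambda$ is nonzero with $\oname{Re}\,\lambda<0$, so $\mu\lambda\in\mathcal{D}(\varepsilon_0,M_0)$ gives the \emph{strict} bound $\abs{p_s(\mu\lambda)}<1$; applying $p_s$ to a block $\left[\begin{smallmatrix}\lambda&1\\0&\lambda\end{smallmatrix}\right]$ yields an upper-triangular matrix with diagonal $p_s(\mu\lambda)$ and super-diagonal $\mu p_s'(\mu\lambda)$, whose $n$-th power has entries bounded by $\abs{p_s(\mu\lambda)}^n$ and $n\,\abs{p_s(\mu\lambda)}^{n-1}\abs{\mu p_s'(\mu\lambda)}$, both of which stay bounded (indeed decay) precisely because the inequality $\abs{p_s(\mu\lambda)}<1$ is strict. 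The only blocks with $\abs{p_s}=1$ are the zero eigenvalues arising from the $s=1$ block $\bs{M}_N=\bs{0}$, which is already diagonal and hence regular, contributing a bounded stationary component. Collecting the two cases shows $\left[p_s(\mu\bs{M})\right]^n$ is uniformly bounded in $n$, which is the required stability.
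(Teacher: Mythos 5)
Your proposal is correct and follows exactly the route the paper intends: the paper's ``proof'' of this theorem is literally a one-sentence appeal to the argument of Theorem~\ref{thm:full_hord}, with the parabolic envelope supplied by Theorem~\ref{thm:wave_sym}. You fill in the two details that this appeal leaves implicit, and both are handled correctly: the affine-in-$R$ global bounds $\abs{y}<Y_0$, $\abs{x}<X_0+RX_1$ read off from (\ref{eq:wave_sym_eig}), and---more substantively---the passage from strict spectral containment $\abs{p_s(\mu\lambda)}<1$ to power-boundedness of $p_s(\mu\bs{M})$ when $\bs{M}$ of (\ref{eq:wave_mat}) carries genuine $2\times2$ Jordan blocks, as Lemma~\ref{lm:wave_eigs} permits. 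That last point is the one place where the wave system truly differs from the diagonalizable ADE case, and the paper glosses over it entirely; your observation that $n\abs{p_s(\mu\lambda)}^{n-1}\abs{\mu p_s'(\mu\lambda)}$ stays bounded precisely because the eigenvalues of defective blocks have strictly negative real part (so the containment is strict) is the correct and needed justification.
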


Finally, we note that the {\bf Case 1} in the proof of Theorem~\ref{thm:wave_sym} can appear quite frequently, especially when $\nu$ is large.
To this end, let us define a finite subset $\Lambda_h(R)$ of $\Lambda(R)$, which contains those eigenvalues corresponding to $\theta=2k\pi h$ with $k\in\mathbb{Z}$, where $h=1/N$ is a cell size for a grid dividing $\Omega=[0,\;1]$ into $N$ uniform sub-intervals.
The set $\Lambda_h^\ast(R)$ is defined similarly by excluding the eigenvalues corresponding to $\theta=0$ from $\Lambda_h(R)$.
It is clear that the eigenvalues of the finite dimensional ODE system~(\ref{eq:wave_semi_mat}) are given by $\Lambda_h(R)$.

\begin{theorem}\label{thm:wave_sym_h}
  Suppose $\mathcal{D}_x^-$ and $\mathcal{D}_x^+$ are symmetric, then:
  \begin{enumerate}[(1)]
    \item there exists a $\nu_1>0$ that depends on $\mathcal{D}_x^{\pm}$ and $\mathcal{D}_{xx}$, such that for all $\nu>\nu_1$, $\Lambda_h(R)\subset\mathbb{R}^-$ for all $h>0$.
    \item suppose $\nu<1/(2\pi)$, then $\Lambda_h(R)\cap(\mathbb{C}\backslash\mathbb{R})\ne\varnothing$ for sufficiently small $h$.
  \end{enumerate}
\end{theorem}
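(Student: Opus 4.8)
The plan is to work directly from the eigenvalue description (\ref{eq:wave_sym_eig}) obtained in the proof of Theorem~\ref{thm:wave_sym}: for a symmetric pair, every element of $\Lambda_h(R)$ has the form $\tfrac12\bigl(Rx_\infty(\theta)-2x_0^-(\theta)\pm\sqrt{R^2x_\infty(\theta)^2-4y_0^-(\theta)^2}\bigr)$, with $R=\nu/h$ and $\theta$ ranging over the grid frequencies $\theta_k=2k\pi h$, $k=1,\dots,N$. The decisive quantity is the discriminant $\Delta(\theta)\eqdef R^2x_\infty(\theta)^2-4y_0^-(\theta)^2$: the eigenvalue is real exactly when $\Delta(\theta)\ge0$ and non-real when $\Delta(\theta)<0$. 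Moreover, by \textbf{Case 1} of that proof every real eigenvalue satisfies $x(\theta)\le-x_0^-(\theta)\le0$, so ``real'' automatically means ``in $\mathbb{R}^-$''. I will also use two facts recorded earlier: $x_\infty(\theta)=b(e^{i\theta})<0$ for $\theta\in(0,2\pi)$ by Lemma~\ref{lm:stab_dxx}, and the near-origin asymptotics $x_\infty(\theta)=-\theta^2+O(\theta^4)$ and $\abs{y_0^-(\theta)}=\abs{\theta}+O(\abs{\theta}^3)$ from the proof of Theorem~\ref{thm:stab_asym}. Finally, since $x_\infty$ is even and $y_0^-$ is odd, both $\abs{x_\infty}$ and $\abs{y_0^-}$ are invariant under $\theta\mapsto2\pi-\theta$, so the paired frequencies $\theta_k$ and $\theta_{N-k}$ give the same $\Delta$; it therefore suffices to analyze frequencies in $(0,\pi]$.

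For part~(1) I would show that the threshold $\nu\ge\frac{2h\abs{y_0^-(\theta)}}{\abs{x_\infty(\theta)}}$ needed to force $\Delta(\theta)\ge0$ is bounded uniformly in $h$ and $k$. The crucial observation is that for $\theta=\theta_k=2k\pi h$ with $k\ge1$ one has $h=\theta/(2k\pi)\le\theta/(2\pi)$, which decouples the two competing $h$-dependencies (the factor $R=\nu/h$ and the set of available frequencies). Substituting this bound gives $\frac{2h\abs{y_0^-(\theta)}}{\abs{x_\infty(\theta)}}\le\frac1\pi\Phi(\theta)$, where $\Phi(\theta)\eqdef\frac{\theta\abs{y_0^-(\theta)}}{\abs{x_\infty(\theta)}}$. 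By the asymptotics above $\Phi(\theta)\to1$ as $\theta\to0^+$, and on the remainder of $(0,\pi]$ the denominator $\abs{x_\infty}$ stays bounded away from $0$, so $\Phi$ extends to a continuous function on $[0,\pi]$ and is therefore bounded. Setting $\nu_1\eqdef\frac1\pi\sup_{(0,\pi]}\Phi$ (finite, and $\ge1/\pi$), any $\nu>\nu_1$ makes $\Delta(\theta_k)\ge0$ for every admissible frequency (using the $\theta\mapsto2\pi-\theta$ symmetry to dispatch the case $\theta_k>\pi$), hence $\Lambda_h(R)\subset\mathbb{R}^-$ for every $h$.

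For part~(2) I would simply test the smallest nonzero frequency $\theta_1=2\pi h$, where diffusion is weakest relative to advection. Expanding with the near-origin asymptotics,
$$\Delta(\theta_1)=R^2x_\infty(2\pi h)^2-4y_0^-(2\pi h)^2=16\pi^2h^2\bigl[\pi^2\nu^2-1+O(h^2)\bigr].$$
When $\nu<1/(2\pi)$ one has $\pi^2\nu^2<1/4$, so the bracket is negative for all sufficiently small $h$; thus $\Delta(\theta_1)<0$ and the corresponding eigenvalue of (\ref{eq:wave_semi_mat}) is non-real, giving $\Lambda_h(R)\cap(\mathbb{C}\backslash\mathbb{R})\ne\varnothing$. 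In fact any $\nu<1/\pi$ keeps the bracket negative; the stated bound $1/(2\pi)$ merely leaves comfortable room for the higher-order remainder.

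The Taylor expansions and continuity estimates are harmless; the one step needing care is the $h$-uniform bound in part~(1). The subtlety is that both $R=\nu/h$ and the spacing of the frequencies $\theta_k$ scale with $h$, so a naive fixed-$\theta$ argument would not be uniform; the inequality $h\le\theta_k/(2\pi)$ is precisely what converts the $h$-dependent threshold into the $h$-free quantity $\Phi(\theta)/\pi$. A secondary point worth flagging is that $\Phi(\theta)$ in fact blows up as $\theta\to2\pi^-$, so restricting to $(0,\pi]$ through the symmetry $\theta\mapsto2\pi-\theta$ is not cosmetic but essential for the supremum to be finite.
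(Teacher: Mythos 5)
Your proposal is correct and follows essentially the same route as the paper: part (1) hinges on the discriminant $R^2x_\infty^2-4(y_0^-)^2$ together with the bound $h\le\theta_k/(2\pi)$ for nonzero grid frequencies (your $\sup_{(0,\pi]}\Phi/\pi$ is just a repackaging of the paper's $\nu_1=C_1/(C_2\pi)$ with $\abs{y_0^-}\le C_1\abs{\theta}$, $\abs{x_\infty}\ge C_2\theta^2$), and part (2) tests the lowest frequency $\theta_1=2\pi h$ exactly as the paper does. Your sharper expansion in part (2), showing any $\nu<1/\pi$ suffices, is a minor refinement of the paper's cruder but equivalent bound (the paper absorbs factors of $2$ and $1/2$ into the remainder estimates, which is where the stated $1/(2\pi)$ comes from), not a different argument.
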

\begin{proof}
  Using the same notation as before, the elements of $\Lambda_h(R)$ are given by (\ref{eq:wave_sym_eig}) with $\theta = 2k\pi h$, $k\in\mathbb{Z}$.
  In the rest of the proof, the dependence on $\theta$ is frequently suppressed for simplicity.

  \smallskip 

  \noindent
  {\bf (1)}. 
  By definition, both $x_\infty$ and $y_0^-$ are analytic functions of $\theta$.
  Additionally, following the proof of Theorem~\ref{thm:stab_asym}, there is:
  \begin{displaymath}
    \lim_{\theta\to0}\frac{y_0^-(\theta)}{\theta} = 1\quad\textrm{ and }\quad
    \lim_{\theta\to0}\frac{x_\infty(\theta)}{\theta^2} = -1\;;
  \end{displaymath}
  and according to (\ref{lm:stab_dxx}), $x_\infty(\theta)<0$ for all $-\pi\le\theta\le\pi$ and $\theta\ne0$.
  Hence there exist two constants $C_1>0$ and $C_2>0$ that are determined by $\mathcal{D}_x^-$ and $\mathcal{D}_{xx}$, respectively, such that for all $\theta\in[-\pi,\;\pi]$:
  \begin{displaymath}
    \abs{y_0^-(\theta)}\le C_1\abs{\theta}\quad\textrm{ and }\quad
    \abs{x_\infty(\theta)} \ge C_2\theta^2\;.
  \end{displaymath}
  To this end, the term inside the square root of (\ref{eq:wave_sym_eig}) is:
  \begin{displaymath}
    R^2x_\infty^2-4(y_0^-)^2 \ge R^2C_2^2\theta^4-4C_1^2\theta^2 = 4C_1^2\theta^2\left(\frac{\nu^2C_2^2\theta^2}{4C_1^2h^2}-1\right)
  \end{displaymath}
  Noticing that if $x(\theta)+iy(\theta)\in\Lambda_h^\ast(R)$ and $-\pi\le\theta\le\pi$, one must have $\abs{\theta}\ge2\pi h$; hence for all these eigenvalues:
  \begin{displaymath}
    R^2x_\infty^2-4(y_0^-)^2 \ge 4C_1^2\theta^2\left(\frac{\nu^2C_2^2\pi^2}{C_1^2}-1\right)\;,
  \end{displaymath}
  which is positive for all $\nu>\nu_1$ that is defined as $\nu_1= C_1/(C_2\pi)$.
  Hence for these $\nu$, all eigenvalues in $\Lambda_h(R)=\Lambda_h^\ast(R)\cup\{0\}$ are real; and by Theorem~\ref{thm:wave_stab}, they'are all non-positive.

  \smallskip

  \noindent
  {\bf (2)}.
  Let us consider an eigenvalue in $\Lambda_h(R)$ corresponding to $\theta_1=2\pi h$, denoted by $\lambda_1=x_1+iy_1$.
  Following the proof of Theorem~\ref{thm:stab_asym} again, there exist analytic functions $d_1(\theta)$, $d_2(\theta)$, and $d_3(\theta)$, such that:
  \begin{displaymath}
    x_0^-(\theta)=C_3\theta^{2l^-}+d_1(\theta)\theta^{2l^-+2}\;,\quad
    y_0^-(\theta)=\theta+d_2(\theta)\theta^{2r^-+3}\;,\quad
    x_\infty(\theta)=-\theta^2+d_3(\theta)\theta^{2q+2}\;,
  \end{displaymath}
  where $C_3>0$ is a constant determined by $\mathcal{D}_x^-$.
  Let $D_k>0$ be an upperbound of $d_k(\theta)$ on the closed interval $[-\pi,\;\pi]$ for $k=1,2,3$, then one has the estimates on the term under the square root of (\ref{eq:wave_sym_eig}):
  \begin{align*}
    R^2x_\infty(\theta)^2-4y_0^-(\theta)^2 
    &=   R^2(\theta^4-2d_3\theta^{2q+4}+d_3^2\theta^{4q+4})-4(\theta^2+2d_2\theta^{2r+4}+d_2^2\theta^{4r+6}) \\
    &\le R^2\theta^4(1+2D_3\theta^{2q}+D_3^2\theta^{4q})-4\theta^2(1-2D_2\theta^{2r+2})\;.
  \end{align*}
  Suppose $0<\nu<1/(2\pi)$ and $h>0$ is sufficiently small such that:
  \begin{displaymath}
    \theta_1=2\pi h<\min\left((4D_3)^{-\frac{1}{2q}},\;(2D_3^2)^{-\frac{1}{4q}},\;(4D_2)^{-\frac{1}{2r+2}}\right)\;,
  \end{displaymath}
  then:
  \begin{align*}
    R^2x_\infty(\theta_1)^2-4y_0^-(\theta_1)^2 < R^2\theta_1^4\left(1+\frac{1}{2}+\frac{1}{2}\right)-4\theta_1^2\left(1-\frac{1}{2}\right)
    = 2\theta_1^2(4\pi^2\nu^2-1) < 0\;.
  \end{align*}
  Hence $\lambda_1=x_1+iy_1\in\mathbb{C}\backslash\mathbb{R}$.
\end{proof}

\section{Numerical Examples}
\label{sec:num}
At last, we verify the previous results with numerical examples.
Particularly, Section~\ref{sec:num_ade} focuses on the advection-diffusion equation and Section~\ref{sec:num_wave} concentrates on the semi-dissipative wave system.
For notation simplicity, we denote an optimally accurate $\mathcal{D}_x$ with left stencil $l$ and right stencil $r$ by $\mathcal{D}_x^{l,r}$; according to Lemma~\ref{lm:stab_dx}, only $\mathcal{D}_x^{r+1,r}$ and $\mathcal{D}_x^{r+2,r}$ (and their symmetric counterpart in the case of the wave equation) will be considered.
Similarly, the optimally accurate $\mathcal{D}_{xx}$ using $2q+1$ grid points on a centered stencil is denoted $\mathcal{D}_{xx}^q$.

\subsection{Linear advection-diffusion equations}
\label{sec:num_ade}
First let us consider the semi-discretized systems and in Figures~\ref{fg:num_ade_semi}, four combinations of $\mathcal{D}_x$ and $\mathcal{D}_{xx}$ are considered: (a) $\mathcal{D}_x^{3,1}$ and $\mathcal{D}_{xx}^2$ -- they have comparable relatively low order of accuracy, (b) $\mathcal{D}_x^{21,20}$ and $\mathcal{D}_{xx}^{20}$ -- they have comparable and high order of accuracy, (c) $\mathcal{D}_x^{3,1}$ and $\mathcal{D}_{xx}^{20}$, and (d) $\mathcal{D}_x^{21,20}$ and $\mathcal{D}_{xx}^2$.
For each of the four combinations, $\Lambda(R)$ corresponding to a variety choices of $R$ is plotted.
\begin{figure}[ht]\centering
  \begin{subfigure}{.45\textwidth}
    \includegraphics[trim=1.2in 0.2in 1.2in 0.2in, clip, width=\textwidth]{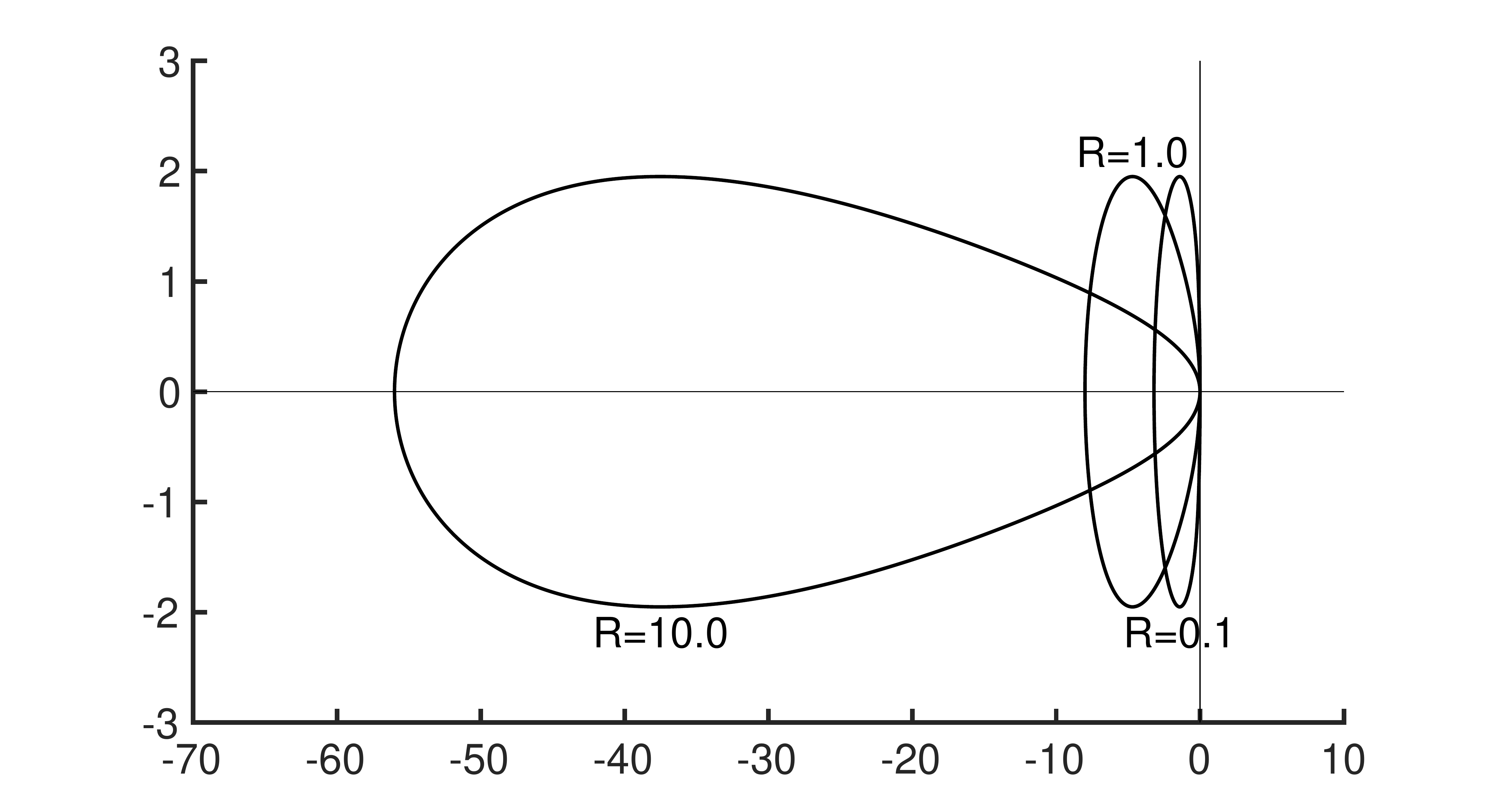}
    \caption{$\mathcal{D}_x^{3,1}$ and $\mathcal{D}_{xx}^2$.}
    \label{fg:num_ade_semi_a}
  \end{subfigure}
  \begin{subfigure}{.45\textwidth}
    \includegraphics[trim=1.2in 0.2in 1.2in 0.2in, clip, width=\textwidth]{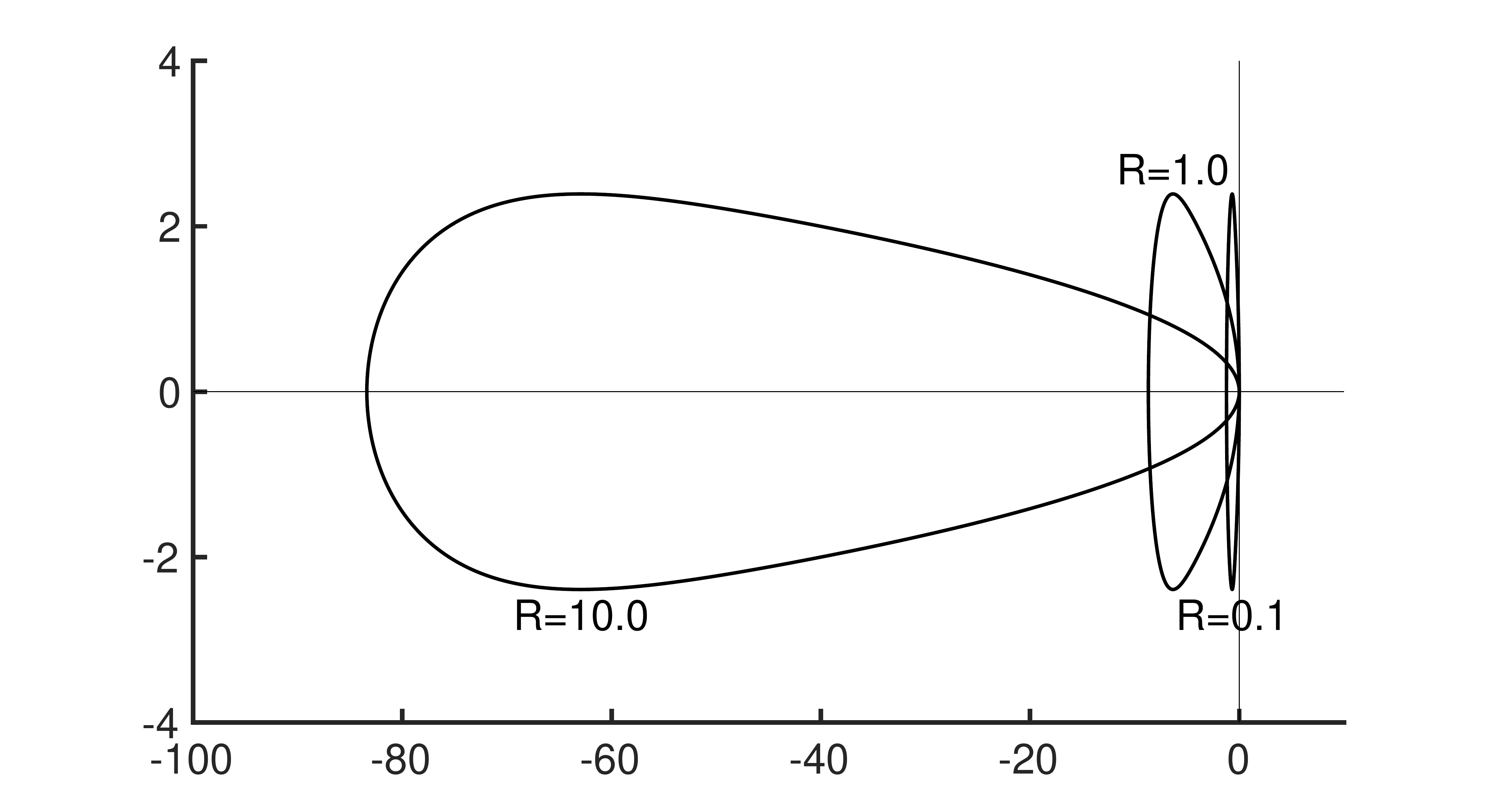}
    \caption{$\mathcal{D}_x^{21,20}$ and $\mathcal{D}_{xx}^{20}$.}
    \label{fg:num_ade_semi_b}
  \end{subfigure}
  \begin{subfigure}{.45\textwidth}
    \includegraphics[trim=1.2in 0.2in 1.2in 0.2in, clip, width=\textwidth]{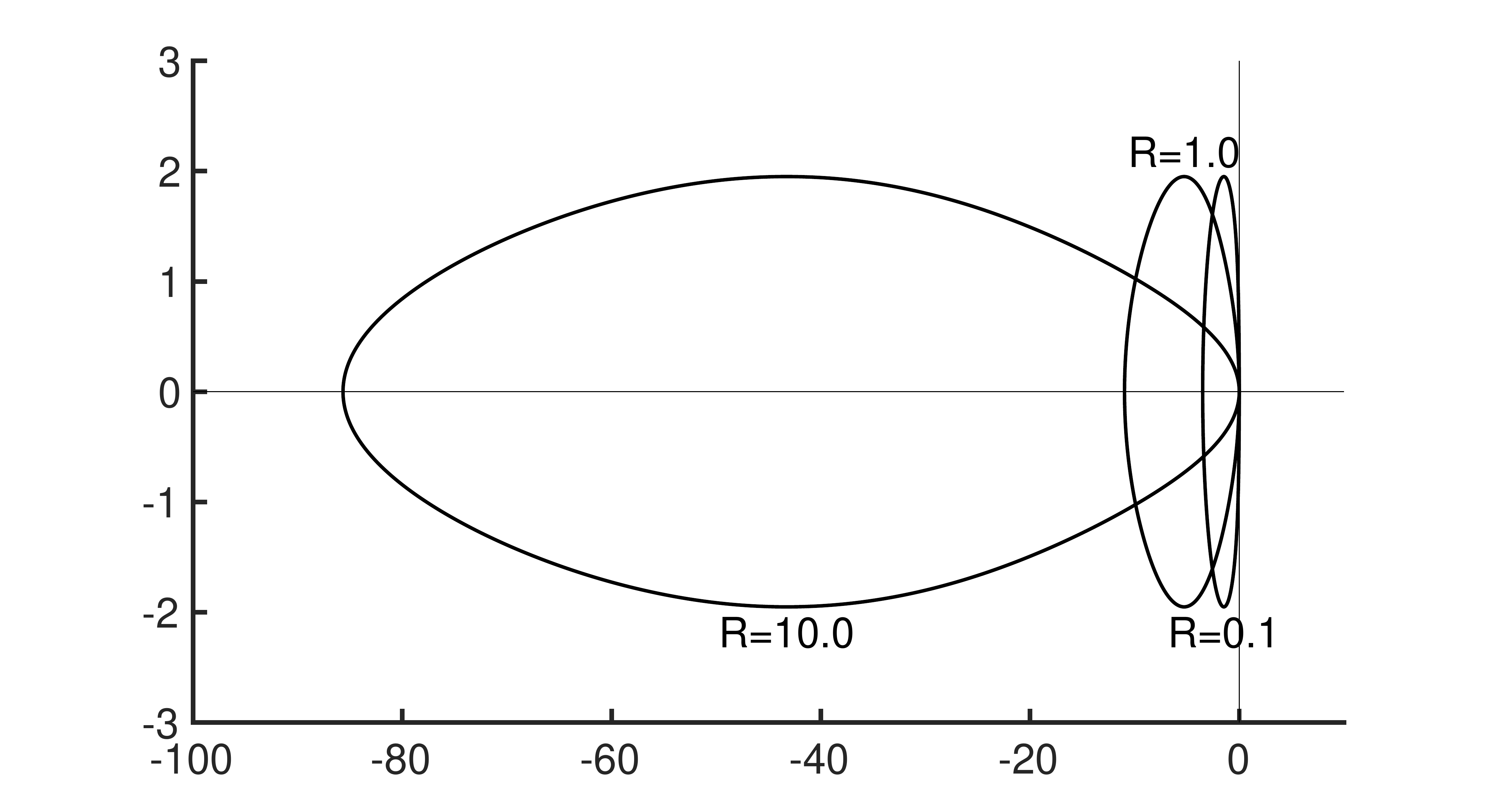}
    \caption{$\mathcal{D}_x^{3,1}$ and $\mathcal{D}_{xx}^{20}$.}
    \label{fg:num_ade_semi_c}
  \end{subfigure}
  \begin{subfigure}{.45\textwidth}
    \includegraphics[trim=1.2in 0.2in 1.2in 0.2in, clip, width=\textwidth]{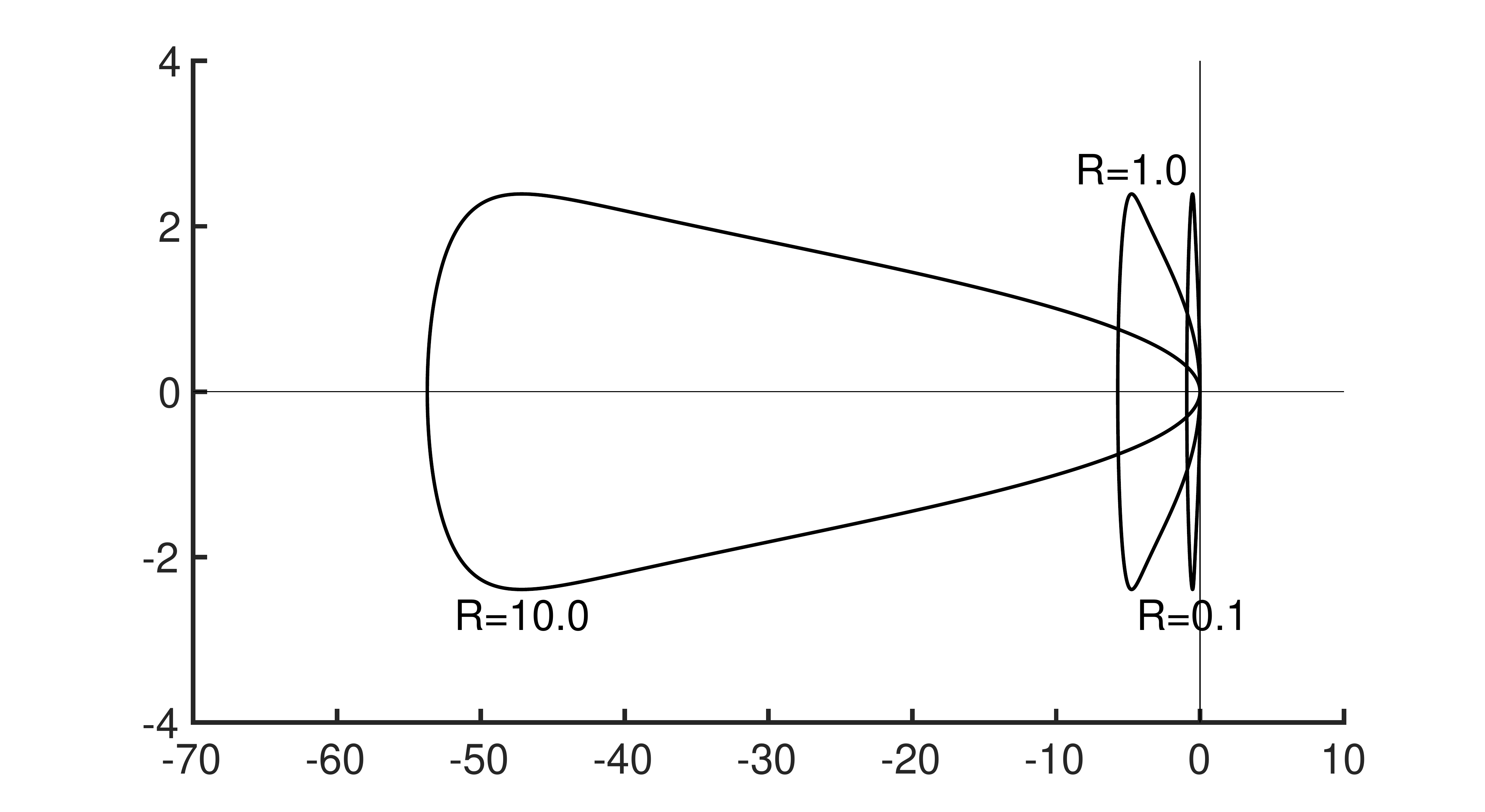}
    \caption{$\mathcal{D}_x^{21,20}$ and $\mathcal{D}_{xx}^2$.}
    \label{fg:num_ade_semi_d}
  \end{subfigure}
  \caption{Trajectories $\Lambda(R)$ with $R=0.1,\;1,\;10$ for the semi-discretized ODE system of the linear ADE by various ($\mathcal{D}_x$, $\mathcal{D}_{xx}$).}
  \label{fg:num_ade_semi}
\end{figure}

Next we verify the results given in Theorem~\ref{thm:full_adv_instab}.
To this end, given a combination of spatial discretization and a temporal method, we plot the {\it instability index}:
\begin{equation}\label{eq:num_ade_idx_mu}
  I_h = \log_{10}(\rho(p_s(\mu\bs{M}))-1)\;,
\end{equation}
against the number of cells $N$ for various Courant number $\mu$.
Here $\rho(\cdot)$ denotes the spectral radius of a matrix.
Note that $I_h$ is only defined for unstable methods, i.e., if $\rho(p_s(\mu\bs{M}))>1$.

In Figure~\ref{fg:num_adv_full_fe}, the FE time-integrator is paired with $\mathcal{D}_x^{2,0}$ and $\mathcal{D}_x^{12,11}$, and Figure~\ref{fg:num_adv_full_rk2} and Figure~\ref{fg:num_adv_full_lsrk3} demonstrate $\mathcal{D}_x^{3,1}$ and $\mathcal{D}_x^{12,11}$ pairing with RK2 and LSRK3, respectively.
\begin{figure}\centering
  \begin{subfigure}{.45\textwidth}
    \includegraphics[trim=0.6in 0.0in 1.1in 0.2in, clip, width=\textwidth]{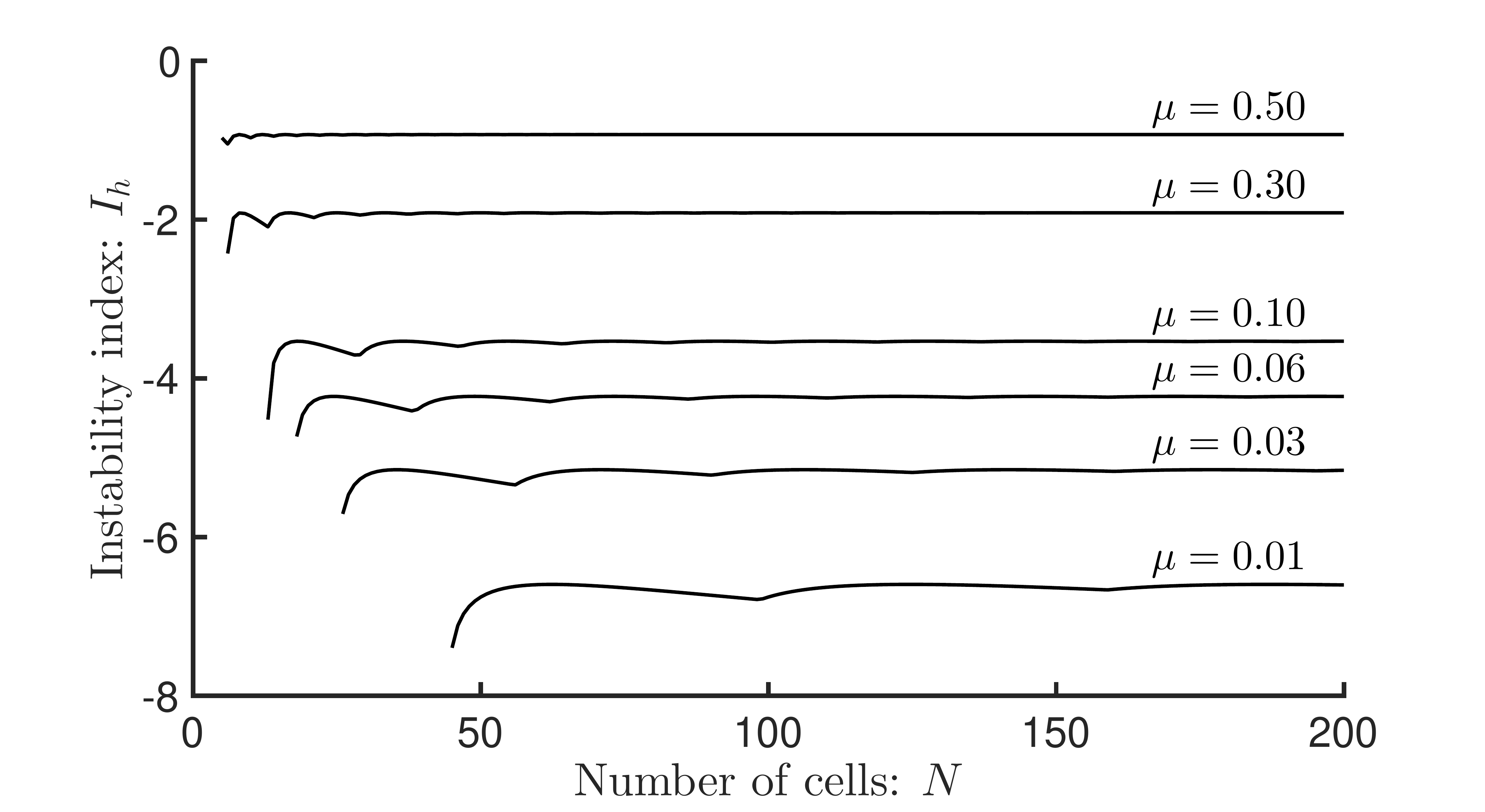}
    \caption{FE and $\mathcal{D}_x^{2,0}$.}
    \label{fg:num_adv_full_fe_a}
  \end{subfigure}
  \begin{subfigure}{.45\textwidth}
    \includegraphics[trim=0.6in 0.0in 1.1in 0.2in, clip, width=\textwidth]{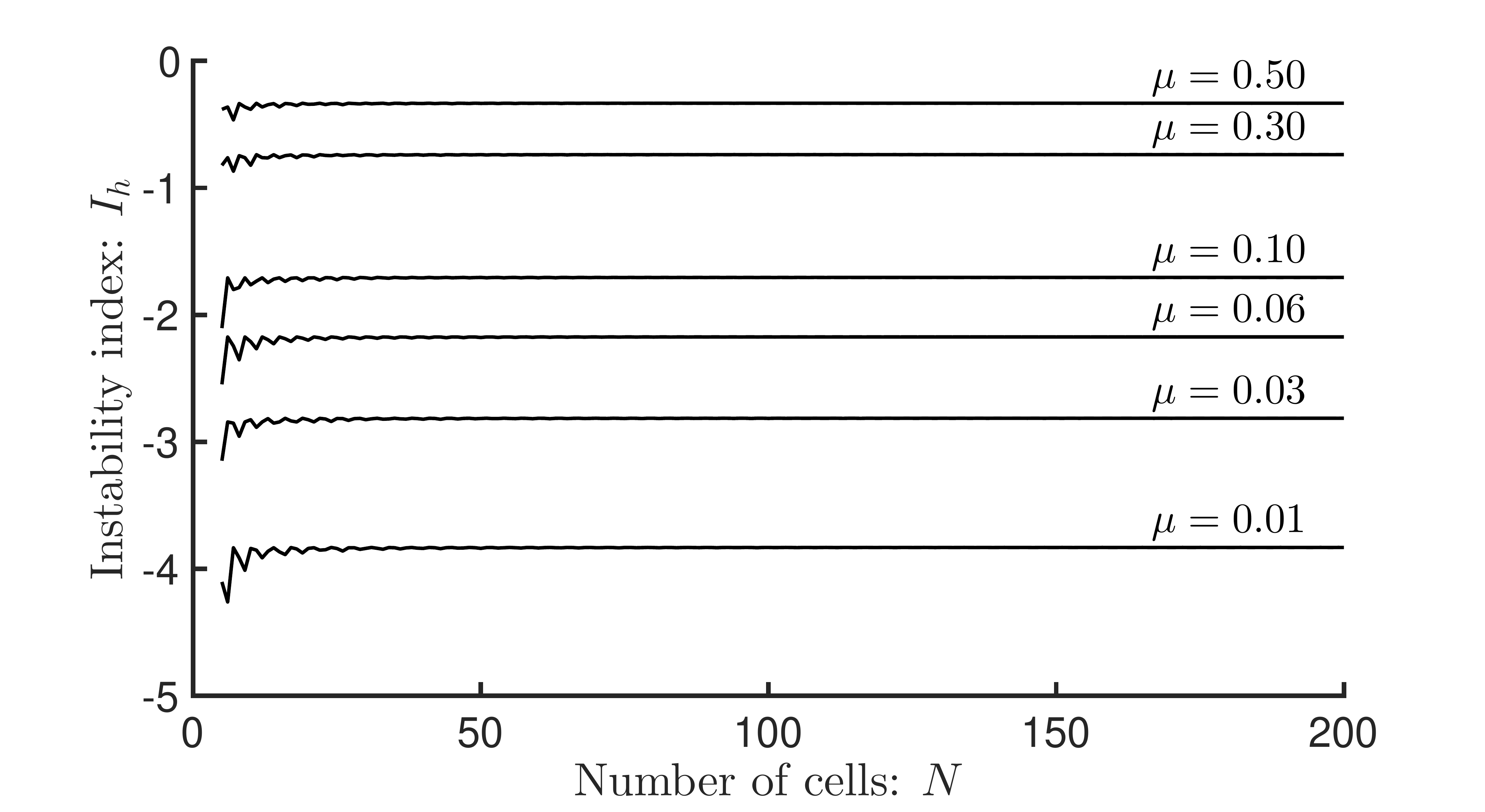}
    \caption{FE and $\mathcal{D}_x^{12,11}$.}
    \label{fg:num_adv_full_fe_b}
  \end{subfigure}
  \caption{The {\it instability index} $I_h$ vs. the number of cells $N$ at different values of $\mu=\delta t/h$ for the advection equation. FE is used in time.}
  \label{fg:num_adv_full_fe}
\end{figure}
\begin{figure}\centering
  \begin{subfigure}{.45\textwidth}
    \includegraphics[trim=0.6in 0.0in 1.1in 0.2in, clip, width=\textwidth]{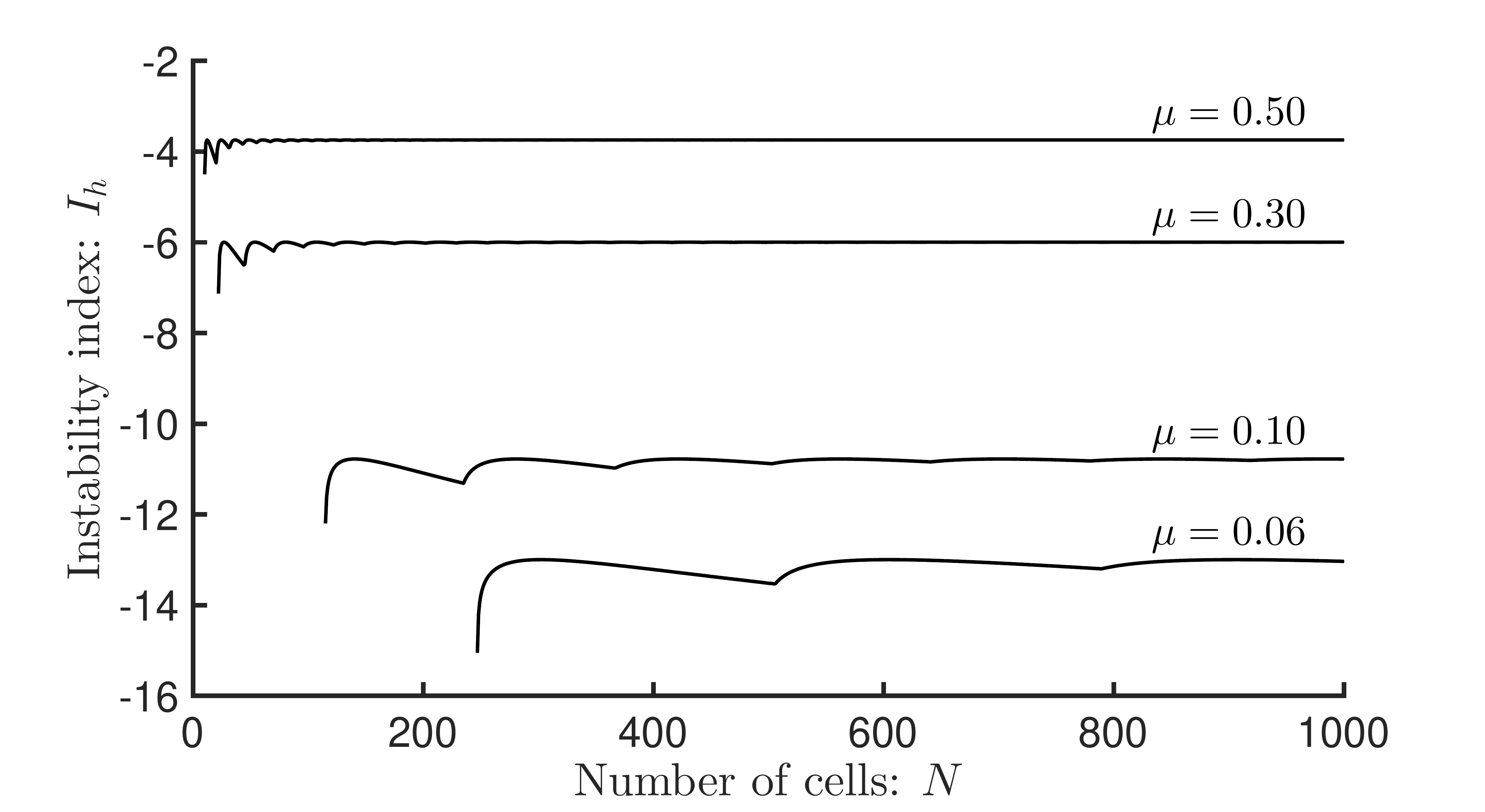}
    \caption{RK2 and $\mathcal{D}_x^{3,1}$.}
    \label{fg:num_adv_full_rk2_a}
  \end{subfigure}
  \begin{subfigure}{.45\textwidth}
    \includegraphics[trim=0.6in 0.0in 1.1in 0.2in, clip, width=\textwidth]{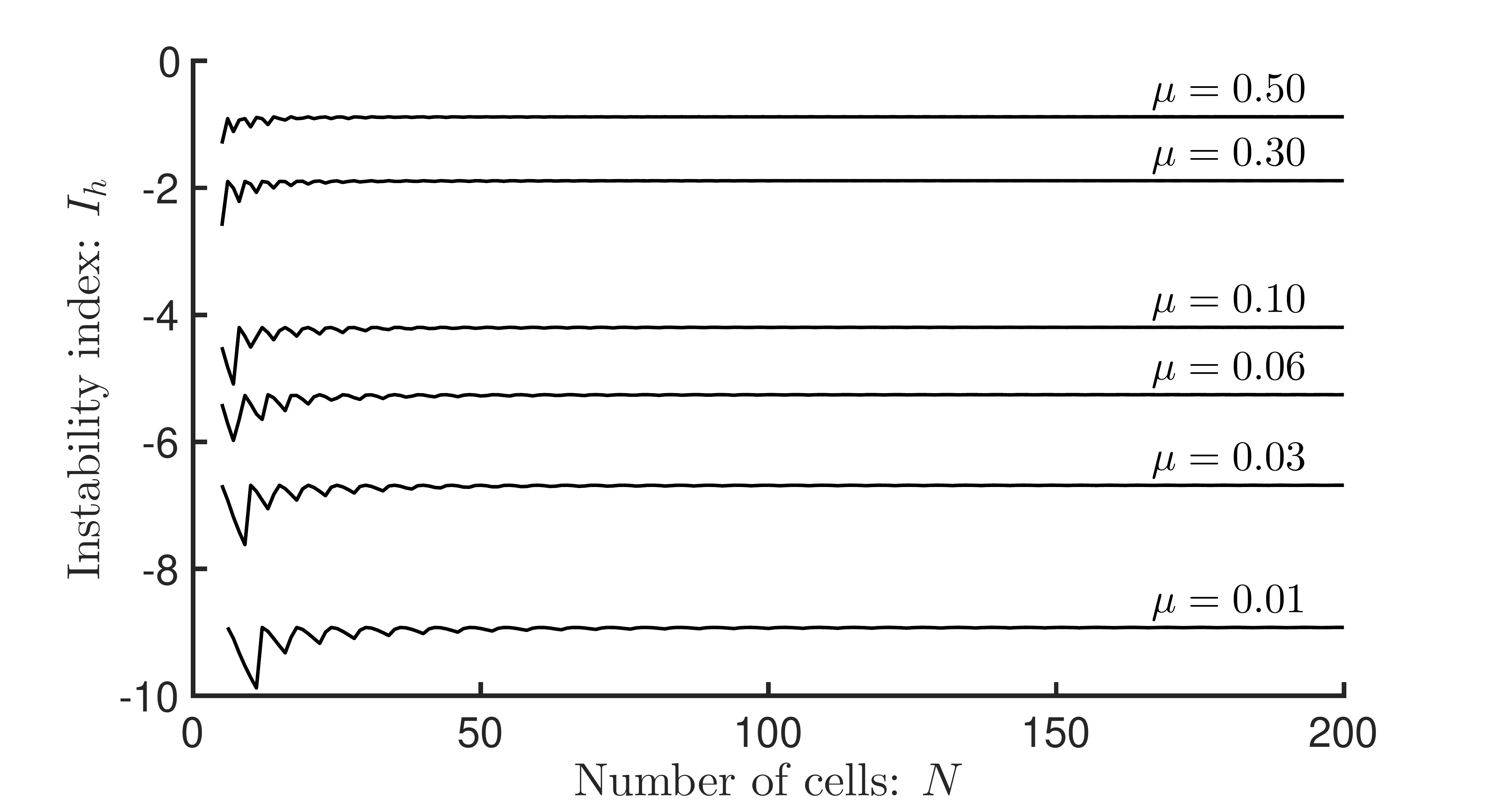}
    \caption{RK2 and $\mathcal{D}_x^{12,11}$.}
    \label{fg:num_adv_full_rk2_b}
  \end{subfigure}
  \caption{The {\it instability index} $I_h$ vs. the number of cells $N$ at different values of $\mu=\delta t/h$ for the advection equation. RK2 is used in time.}
  \label{fg:num_adv_full_rk2}
\end{figure}
\begin{figure}\centering
  \begin{subfigure}{.45\textwidth}
    \includegraphics[trim=0.6in 0.0in 1.1in 0.2in, clip, width=\textwidth]{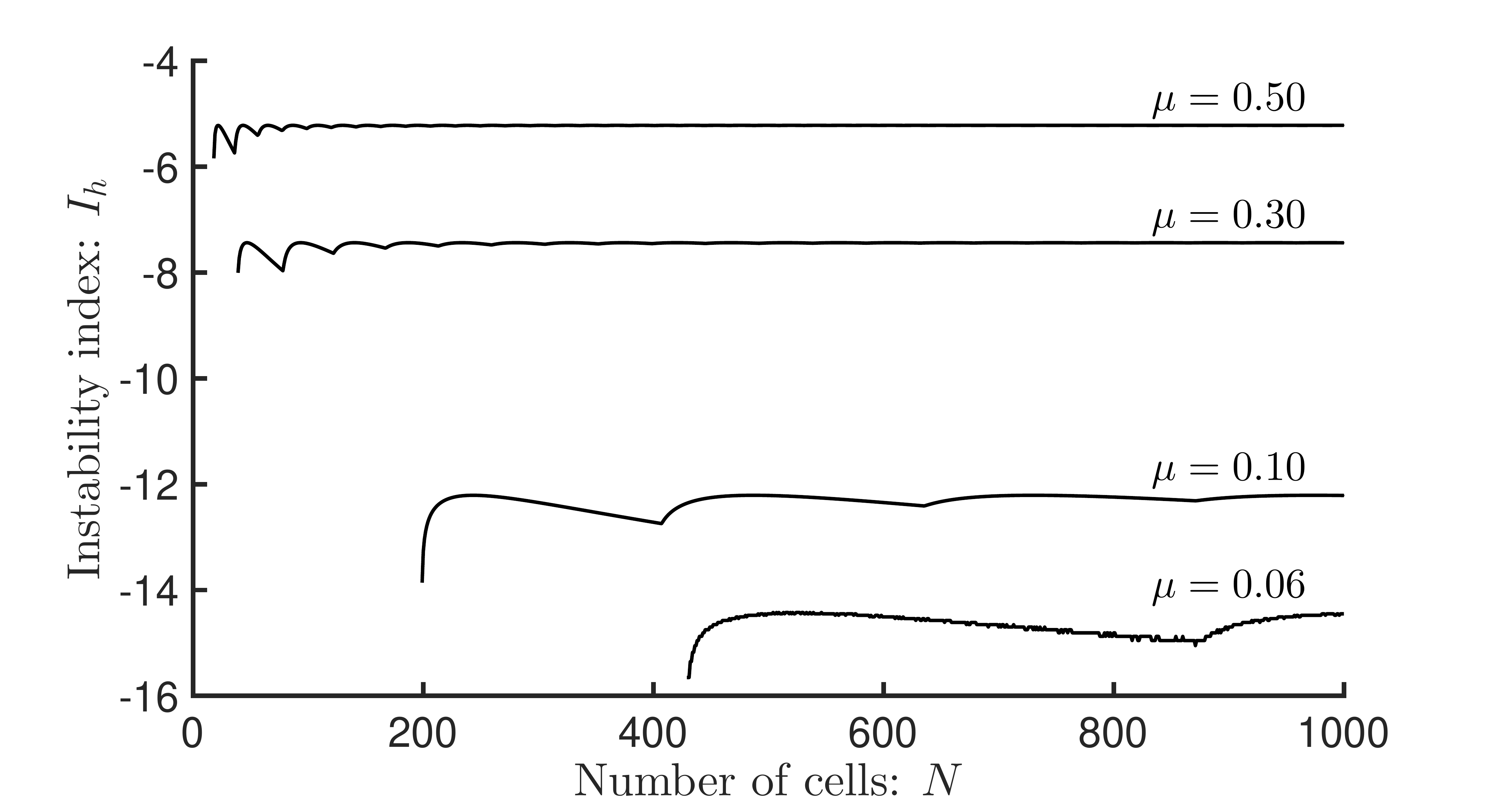}
    \caption{LSRK3 and $\mathcal{D}_x^{3,1}$.}
    \label{fg:num_adv_full_lsrk3_a}
  \end{subfigure}
  \begin{subfigure}{.45\textwidth}
    \includegraphics[trim=0.6in 0.0in 1.1in 0.2in, clip, width=\textwidth]{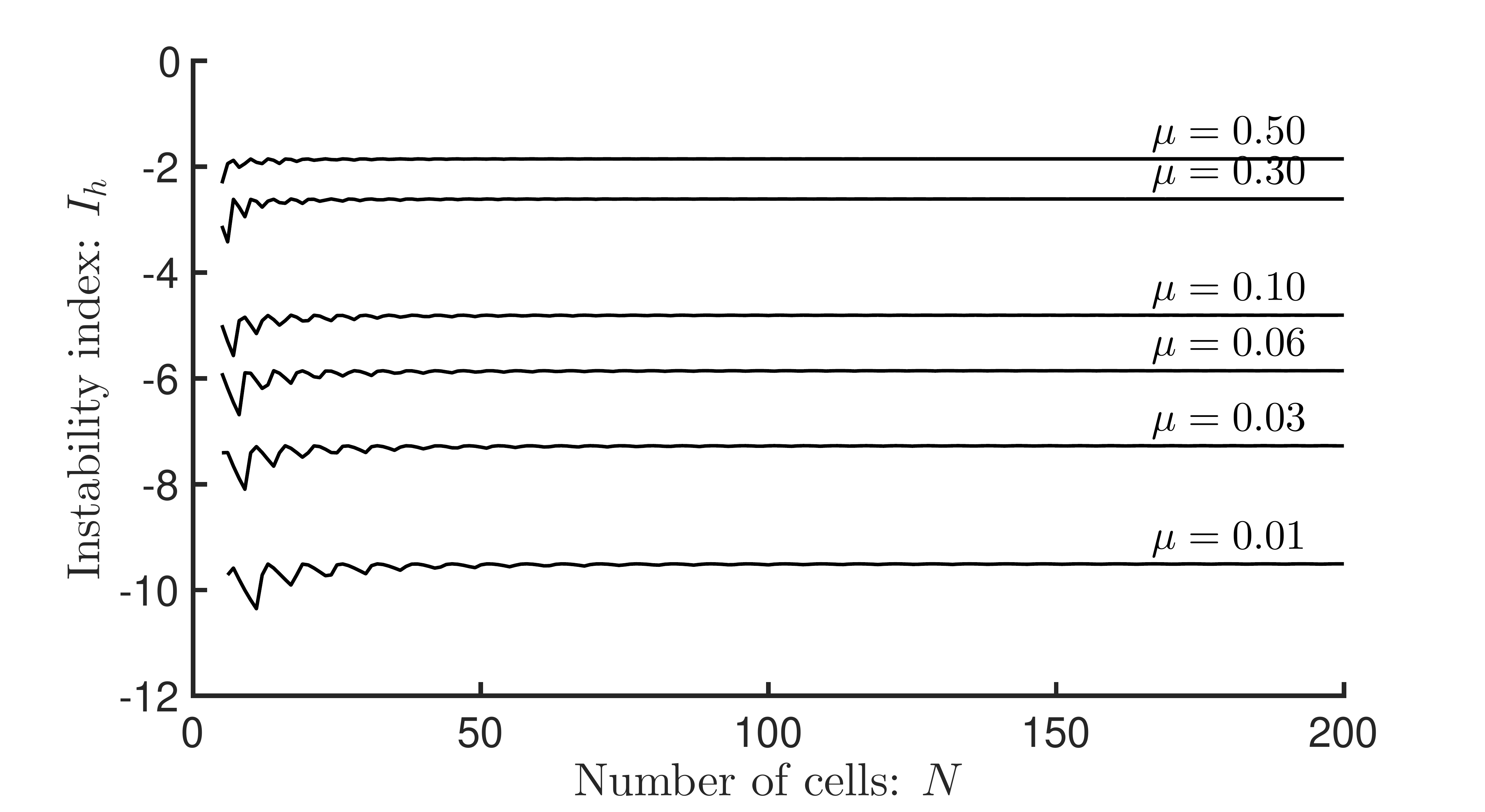}
    \caption{LSRK3 and $\mathcal{D}_x^{12,11}$.}
    \label{fg:num_adv_full_lsrk3_b}
  \end{subfigure}
  \caption{The {\it instability index} $I_h$ vs. the number of cells $N$ at different values of $\mu=\delta t/h$ for the advection equation. LSRK3 is used in time.}
  \label{fg:num_adv_full_lsrk3}
\end{figure}
These plots on the one hand verify the result in Theorem~\ref{thm:full_adv_instab} and on the other hand indicate that such instability may be difficult to observe in practice.
In particular, complementing the result in the theorem, one makes the following observations from these curves:
\begin{itemize}
  \item Decreasing the Courant number reduces the stability violation.
  \item Higher-order spatial discretization tends to introduce larger stability violation.
  \item The instability caused by FE is generally much larger than that of RK2 and LSRK3.
  \item When RK2 and LSRK3 are combined with the lower-order methods, the instability index is close to the machine precision error for small Courant numbers.
\end{itemize}

These instability can also be observed directly by solving the periodic problem for the advection equation $w_t+w_x=0$ with the initial condition given by a Gaussian pulse $w(x,0) = \exp(-100(x-1/2)^2)$.
In Figures~\ref{fg:num_adv_gaussian_fe}--\ref{fg:num_adv_gaussian_lsrk3}, the numerical solutions obtained by the same set of schemes as before are plotted to demonstrate their growth in magnitudes.
For all schemes, we pick a representative $\mu_c$ that gives an $I_h$ between $-5$ and $-6$ -- istability can still be seen with smaller $\mu_c$ but it usually takes an extremely long simulaton to show up; and for all computations, a uniform grid with $100$ uniform cells is used.
\begin{figure}\centering
  \begin{subfigure}{.45\textwidth}
    \includegraphics[trim=0.6in 0.0in 1.1in 0.2in, clip, width=\textwidth]{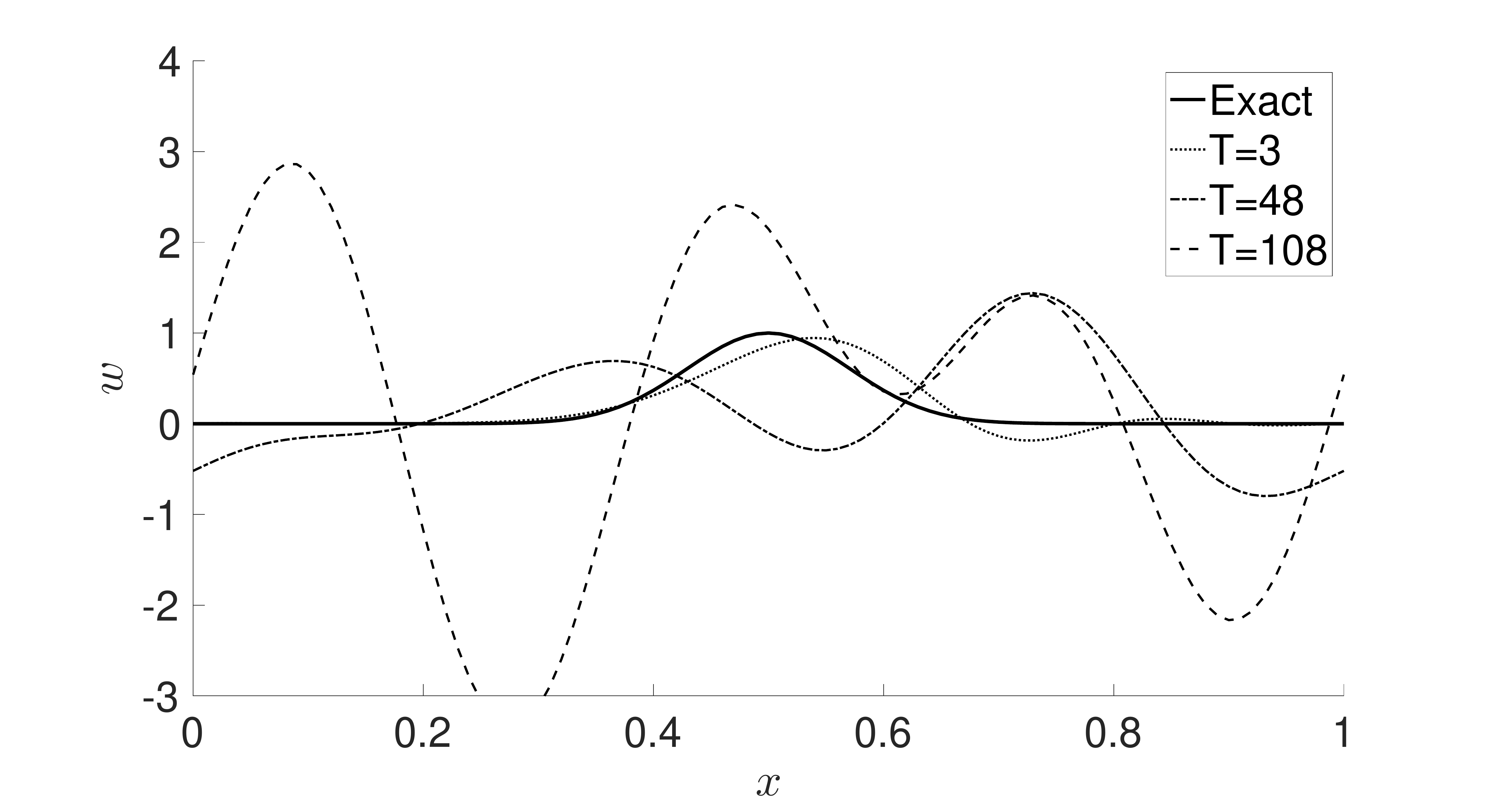}
    \caption{FE and $\mathcal{D}_x^{2,0}$, $\mu=0.03$.}
    \label{fg:num_adv_gaussian_fe_a}
  \end{subfigure}
  \begin{subfigure}{.45\textwidth}
    \includegraphics[trim=0.6in 0.0in 1.1in 0.2in, clip, width=\textwidth]{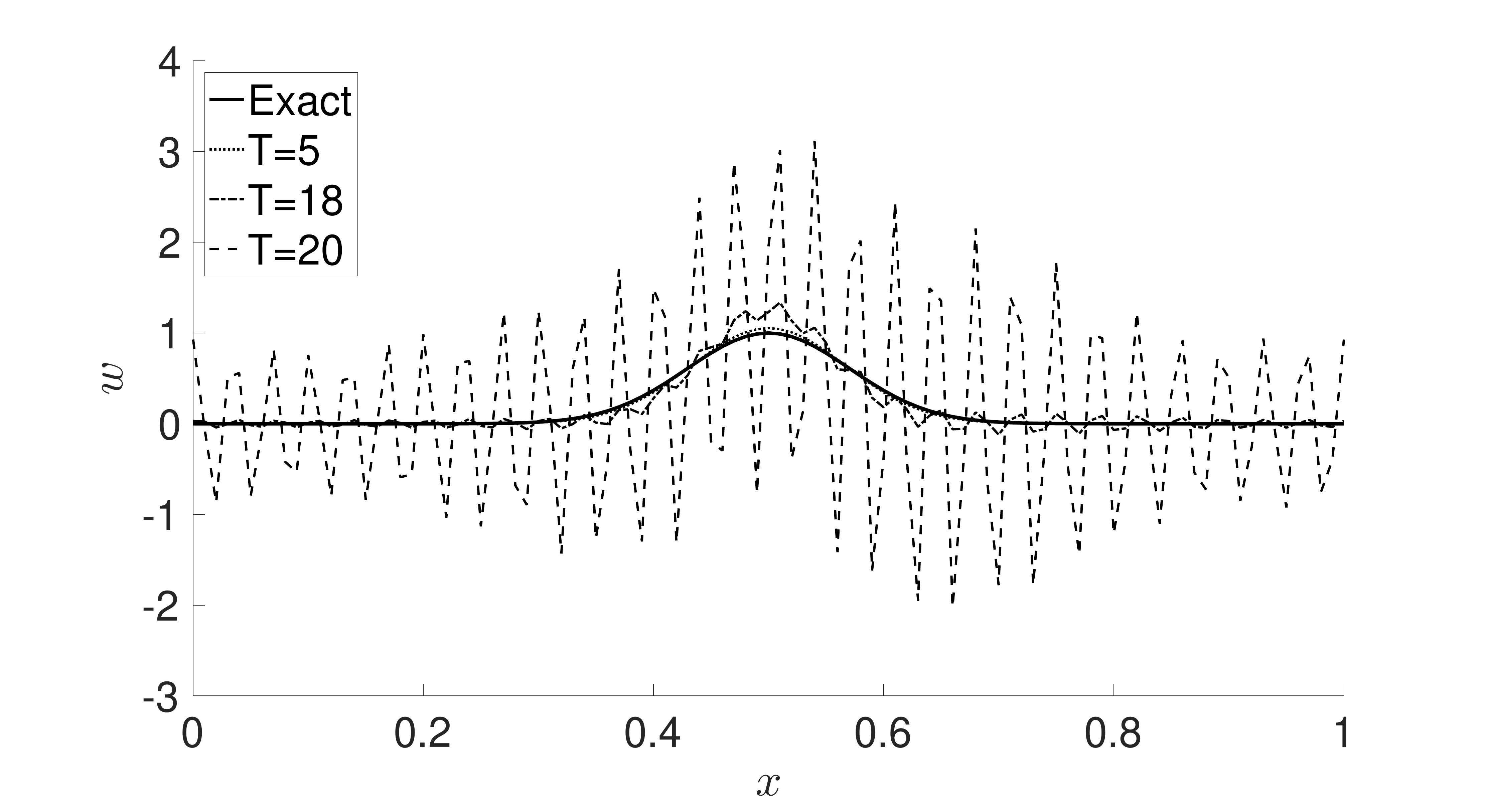}
    \caption{FE and $\mathcal{D}_x^{12,11}$, $\mu=0.01$.}
    \label{fg:num_adv_gaussian_fe_b}
  \end{subfigure}
  \caption{Advection of a Gaussian pulse by FE in time and two $\mathcal{D}_x$'s.}
  \label{fg:num_adv_gaussian_fe}
\end{figure}
\begin{figure}\centering
  \begin{subfigure}{.45\textwidth}
    \includegraphics[trim=0.6in 0.0in 1.1in 0.2in, clip, width=\textwidth]{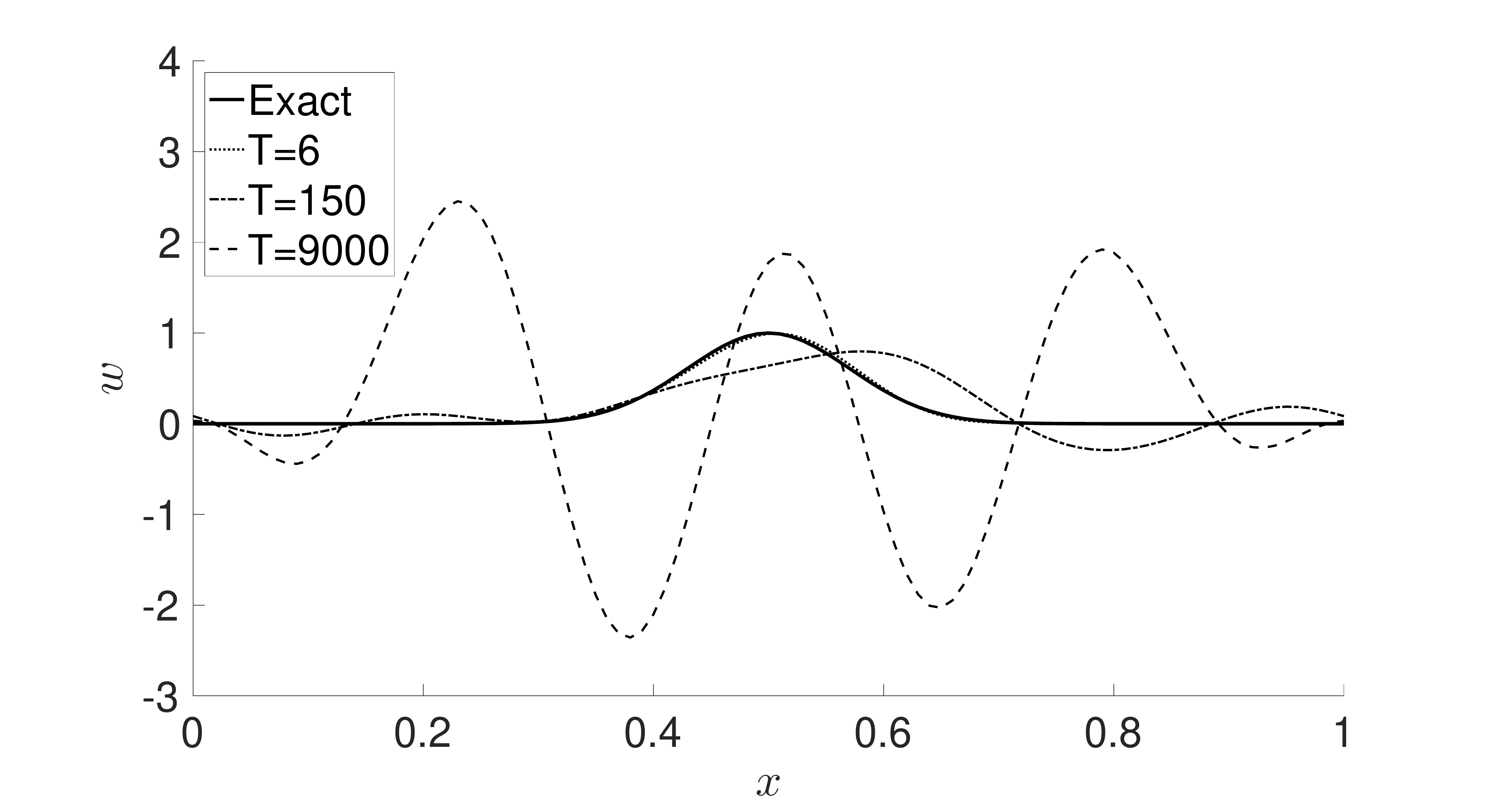}
    \caption{RK2 and $\mathcal{D}_x^{3,1}$, $\mu=0.3$.}
    \label{fg:num_adv_gaussian_rk2_a}
  \end{subfigure}
  \begin{subfigure}{.45\textwidth}
    \includegraphics[trim=0.6in 0.0in 1.1in 0.2in, clip, width=\textwidth]{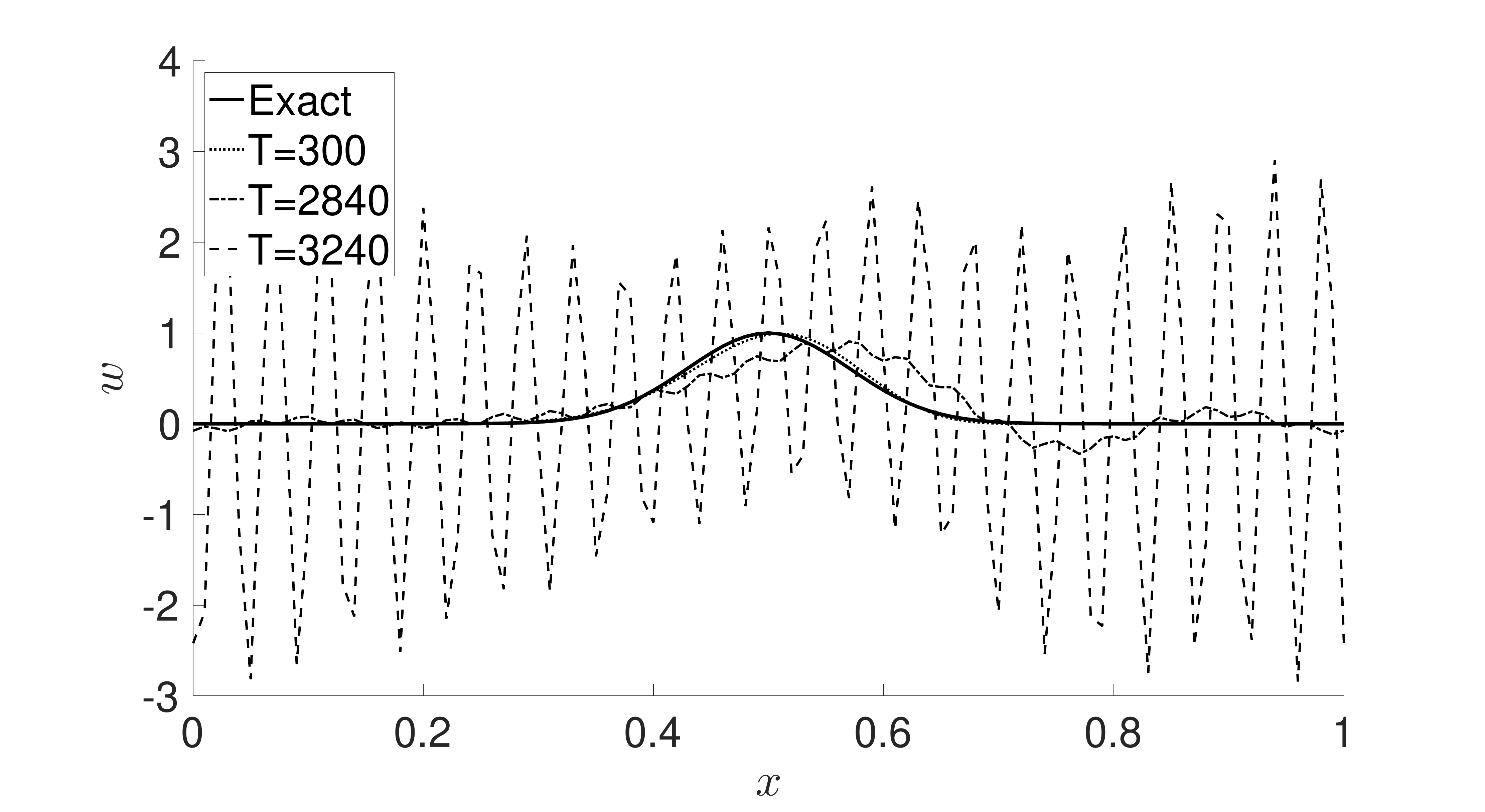}
    \caption{RK2 and $\mathcal{D}_x^{12,11}$, $\mu=0.06$.}
    \label{fg:num_adv_gaussian_rk2_b}
  \end{subfigure}
  \caption{Advection of a Gaussian pulse by RK2 in time and two $\mathcal{D}_x$'s.}
  \label{fg:num_adv_gaussian_rk2}
\end{figure}
\begin{figure}\centering
  \begin{subfigure}{.45\textwidth}
    \includegraphics[trim=0.6in 0.0in 1.1in 0.2in, clip, width=\textwidth]{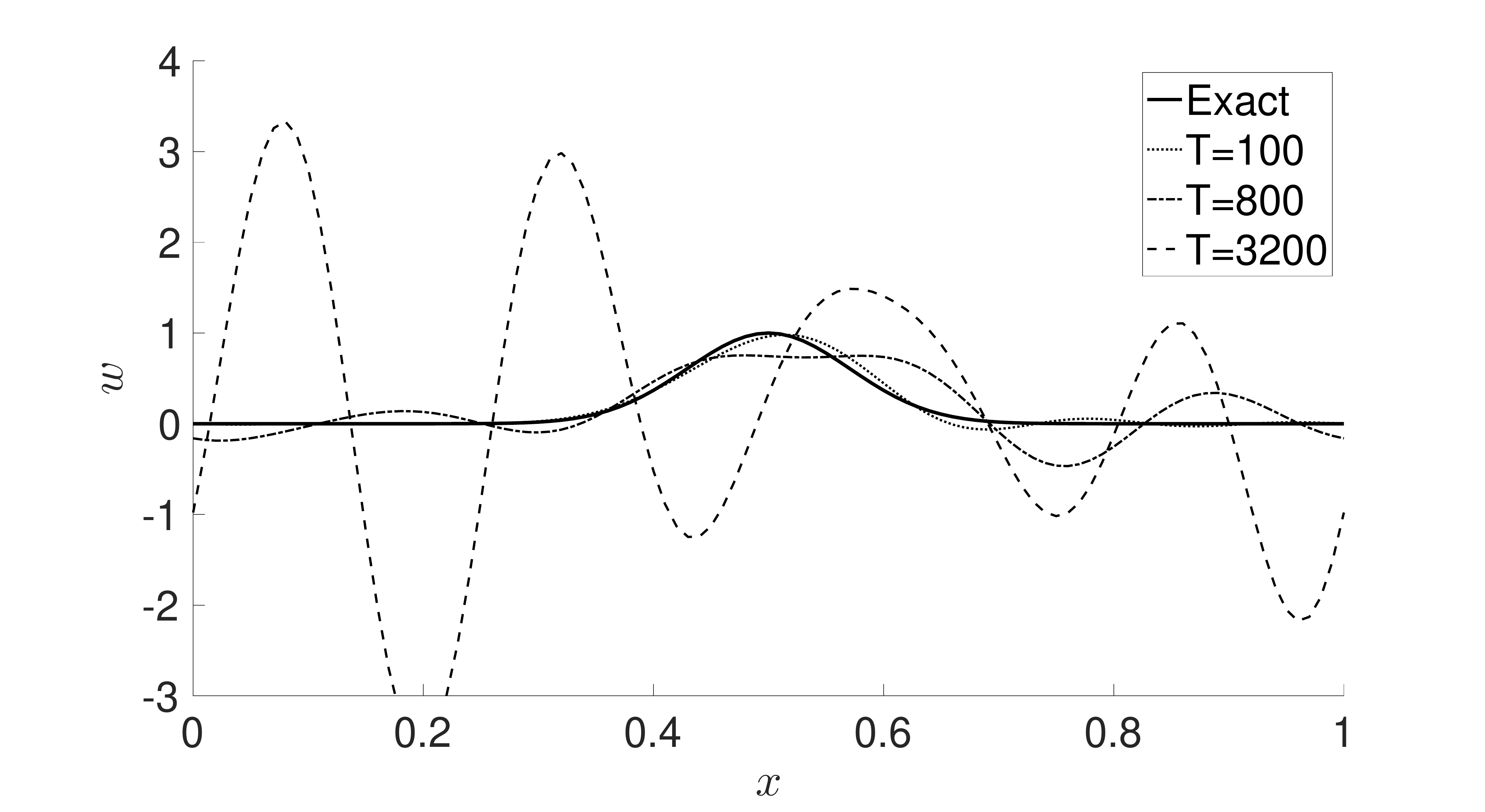}
    \caption{LSRK3 and $\mathcal{D}_x^{3,1}$, $\mu=0.5$.}
    \label{fg:num_adv_gaussian_lsrk3_a}
  \end{subfigure}
  \begin{subfigure}{.45\textwidth}
    \includegraphics[trim=0.6in 0.0in 1.1in 0.2in, clip, width=\textwidth]{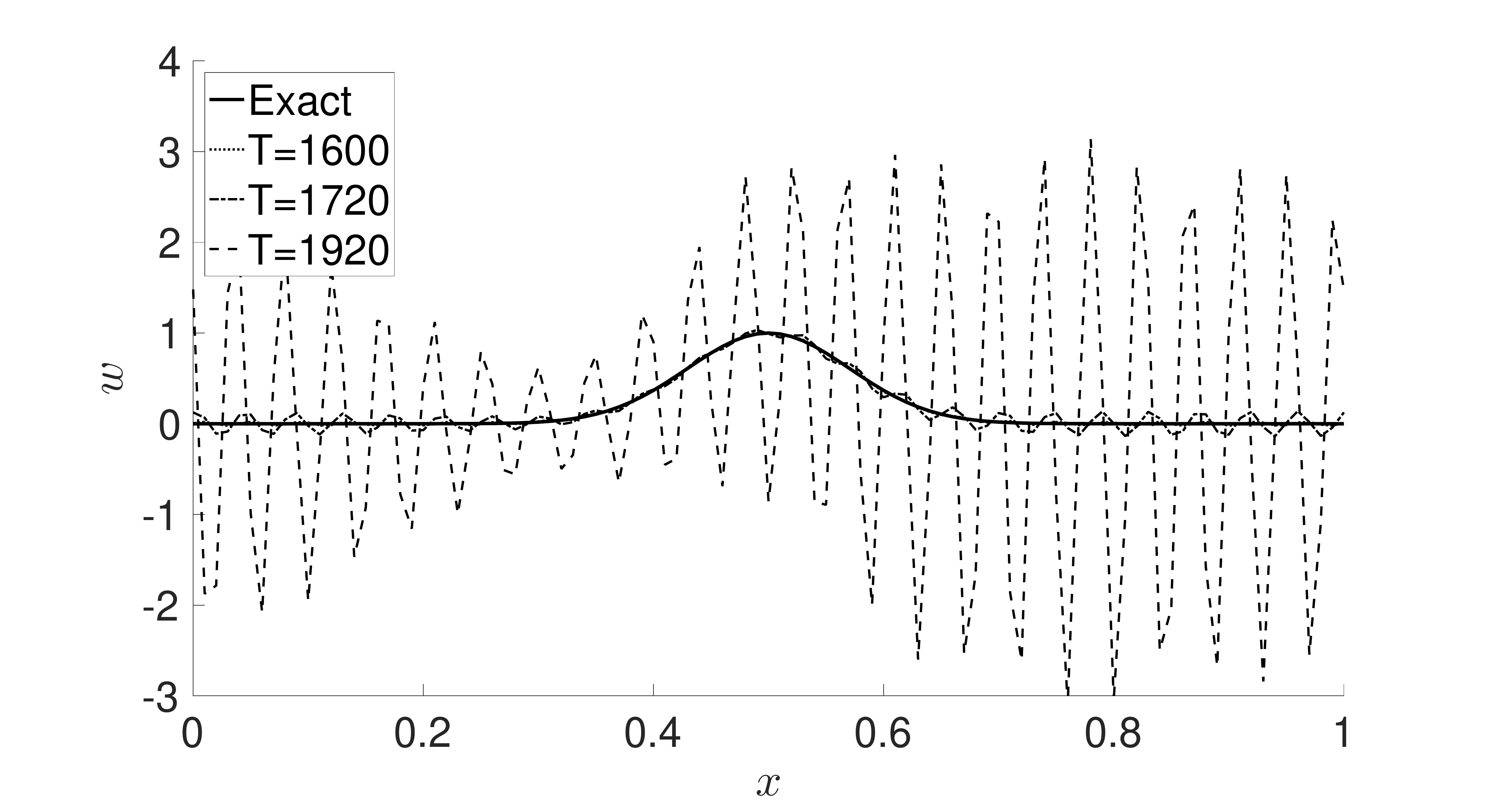}
    \caption{LSRK3 and $\mathcal{D}_x^{12,11}$, $\mu=0.1$.}
    \label{fg:num_adv_gaussian_lsrk3_b}
  \end{subfigure}
  \caption{Advection of a Gaussian pulse by LSRK3 in time and two $\mathcal{D}_x$'s.
    In the case of $\mathcal{D}_x^{12,11}$ (right), the numerical solution at $T=1600$ is on top of the exact one.}
  \label{fg:num_adv_gaussian_lsrk3}
\end{figure}
In all these plots, the numerical solutions at three different times (denoted by $T$ in the legends) are plotted against the exact solution, which happens to be the same as the initial condition for all chosen $T$.

Lastly, to verify Theorem~\ref{thm:full_hord}, we rewrite the second equation of (\ref{eq:full_cfl}) as:
\begin{displaymath}
  \mu_c\eqdef\frac{\nu\delta t}{h^2} < \left(\beta_0+\frac{\alpha_0h}{\nu}\right)^{-1}\;.
\end{displaymath}
Hence we expect stability (i.e., $I_h$ undefined) provided $\mu_c\lessapprox\beta_0^{-1}$ and $h$ is sufficiently small.
To this end, we plot $I_h$ against $N=1/h$ at different values of $\mu_c$ in Figure~\ref{fg:num_ade_full} for a variety of discretizations, which include the spatial discretization being $\mathcal{D}_x^{3,1}$ and $\mathcal{D}_{xx}^2$ in the left column or $\mathcal{D}_x^{11,10}$ and $\mathcal{D}_{xx}^{10}$ in the right column, and the time-integrator being FE (top row), RK2 (middle row), or RK4 (bottom row).
In all these plots, we set $\nu=0.1$.
\begin{figure}\centering
  \begin{subfigure}{\textwidth}
    \includegraphics[trim=0.6in 0.0in 1.1in 0.2in, clip, width=.45\textwidth]{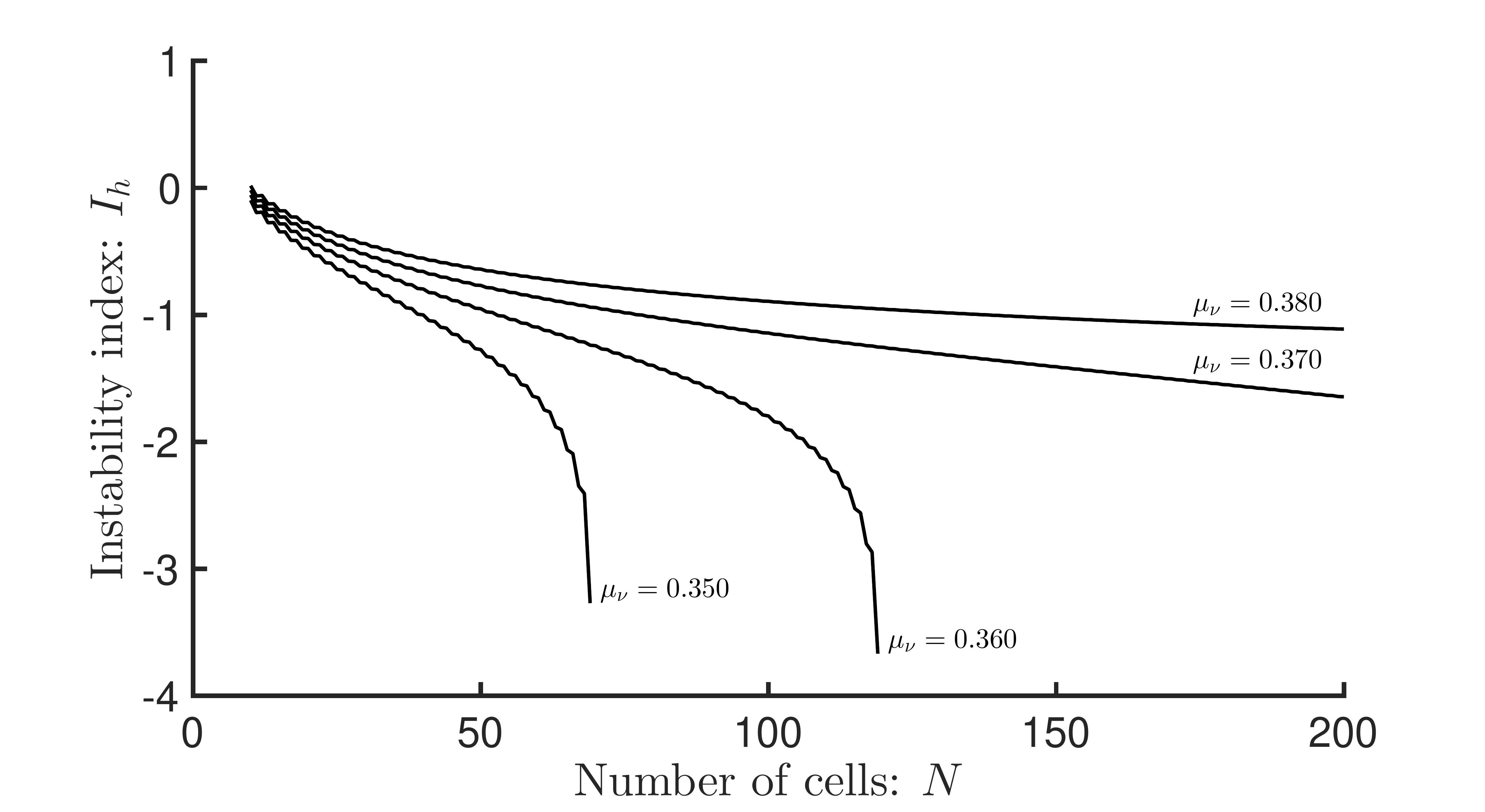}~~
    \includegraphics[trim=0.6in 0.0in 1.1in 0.2in, clip, width=.45\textwidth]{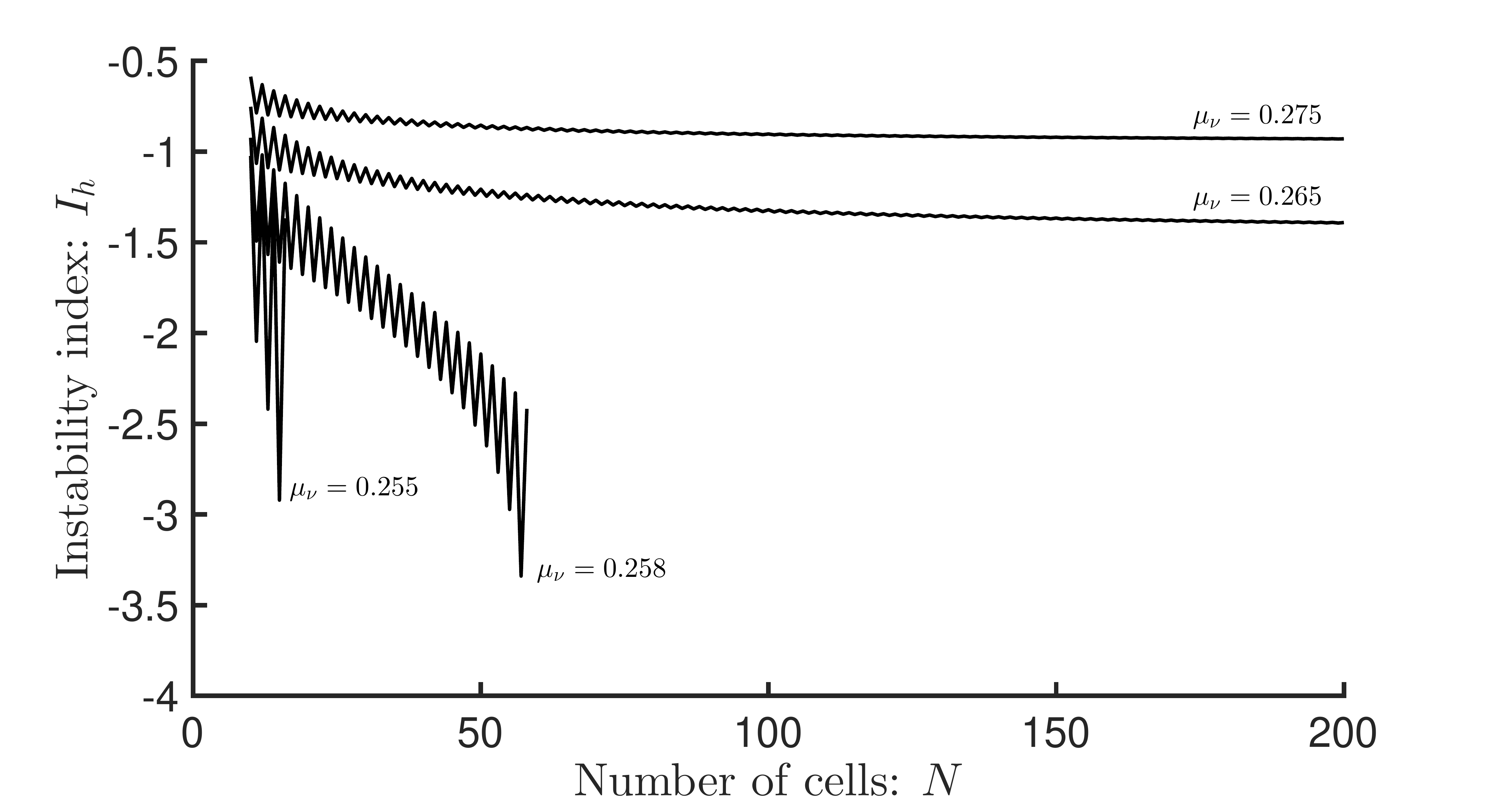}~~
    \caption{FE combined with: (left) $\mathcal{D}_x^{3,1}$ and $\mathcal{D}_{xx}^2$, (right) $\mathcal{D}_x^{11,10}$ and $\mathcal{D}_{xx}^{10}$.}
    \label{fg:num_ade_full_fe}
  \end{subfigure}
  \begin{subfigure}{\textwidth}
    \includegraphics[trim=0.6in 0.0in 1.1in 0.2in, clip, width=.45\textwidth]{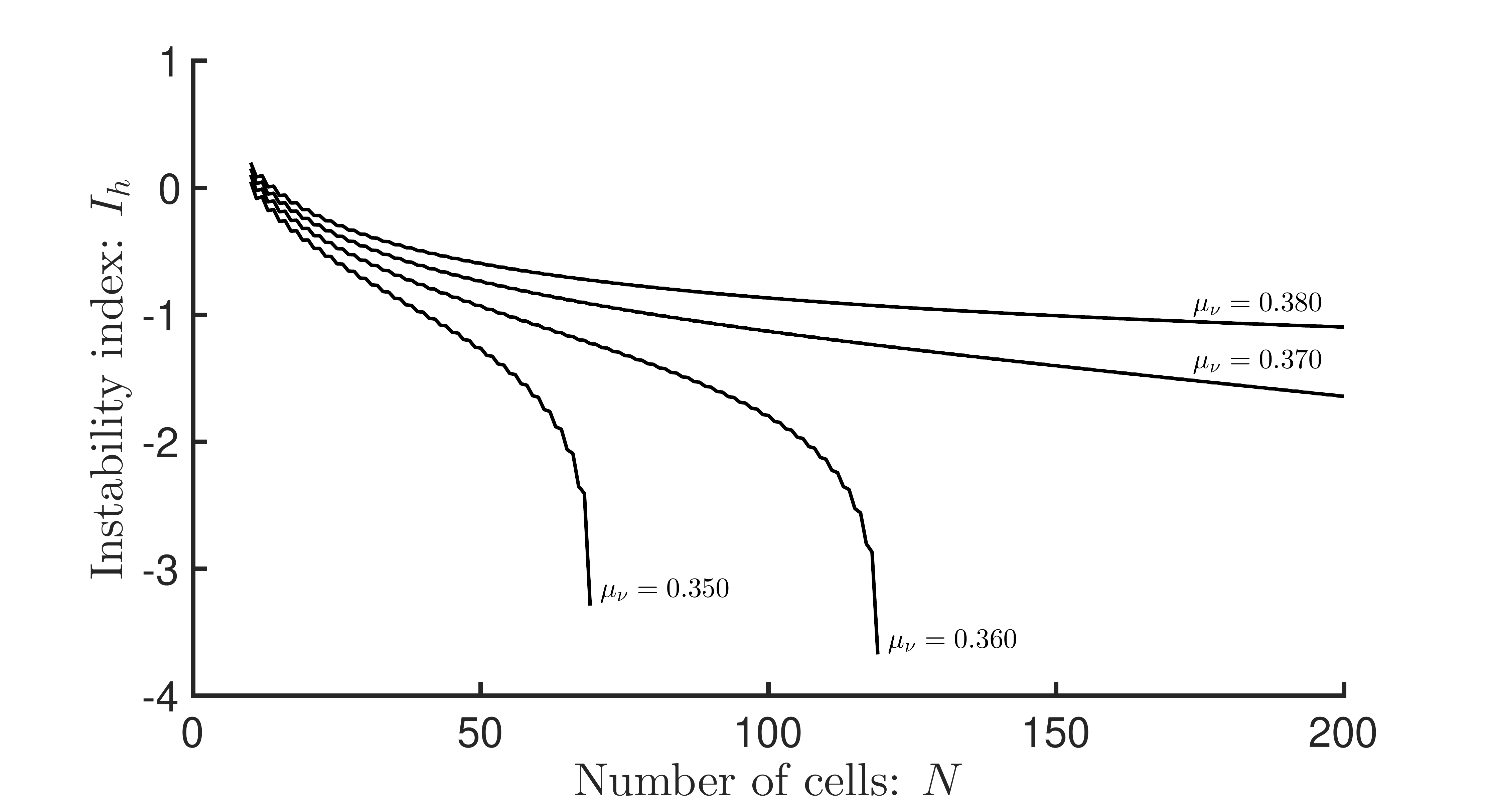}~~
    \includegraphics[trim=0.6in 0.0in 1.1in 0.2in, clip, width=.45\textwidth]{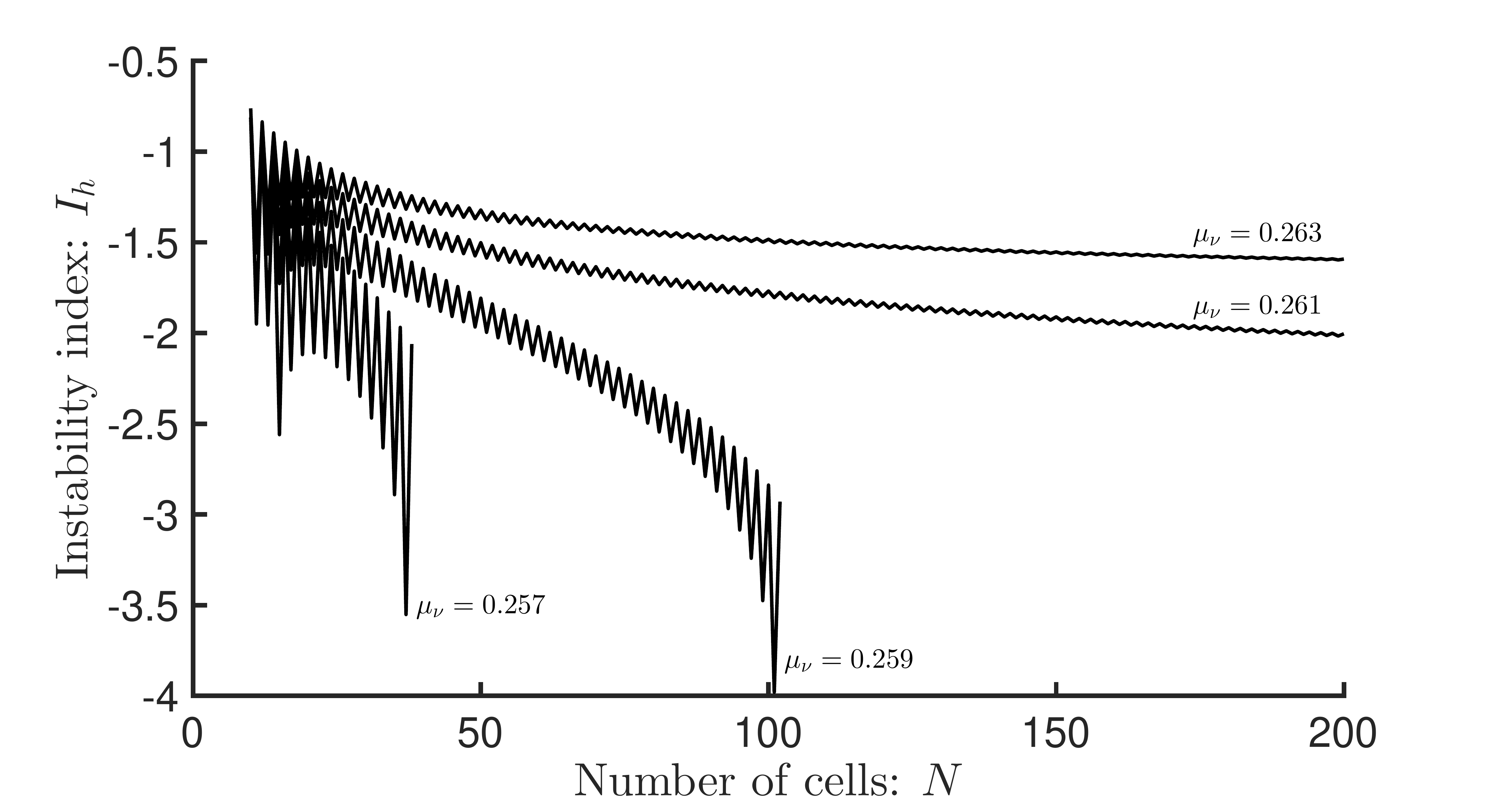}~~
    \caption{RK2 combined with: (left) $\mathcal{D}_x^{3,1}$ and $\mathcal{D}_{xx}^2$, (right) $\mathcal{D}_x^{11,10}$ and $\mathcal{D}_{xx}^{10}$.}
    \label{fg:num_ade_full_rk2}
  \end{subfigure}
  \begin{subfigure}{\textwidth}
    \includegraphics[trim=0.6in 0.0in 1.1in 0.2in, clip, width=.45\textwidth]{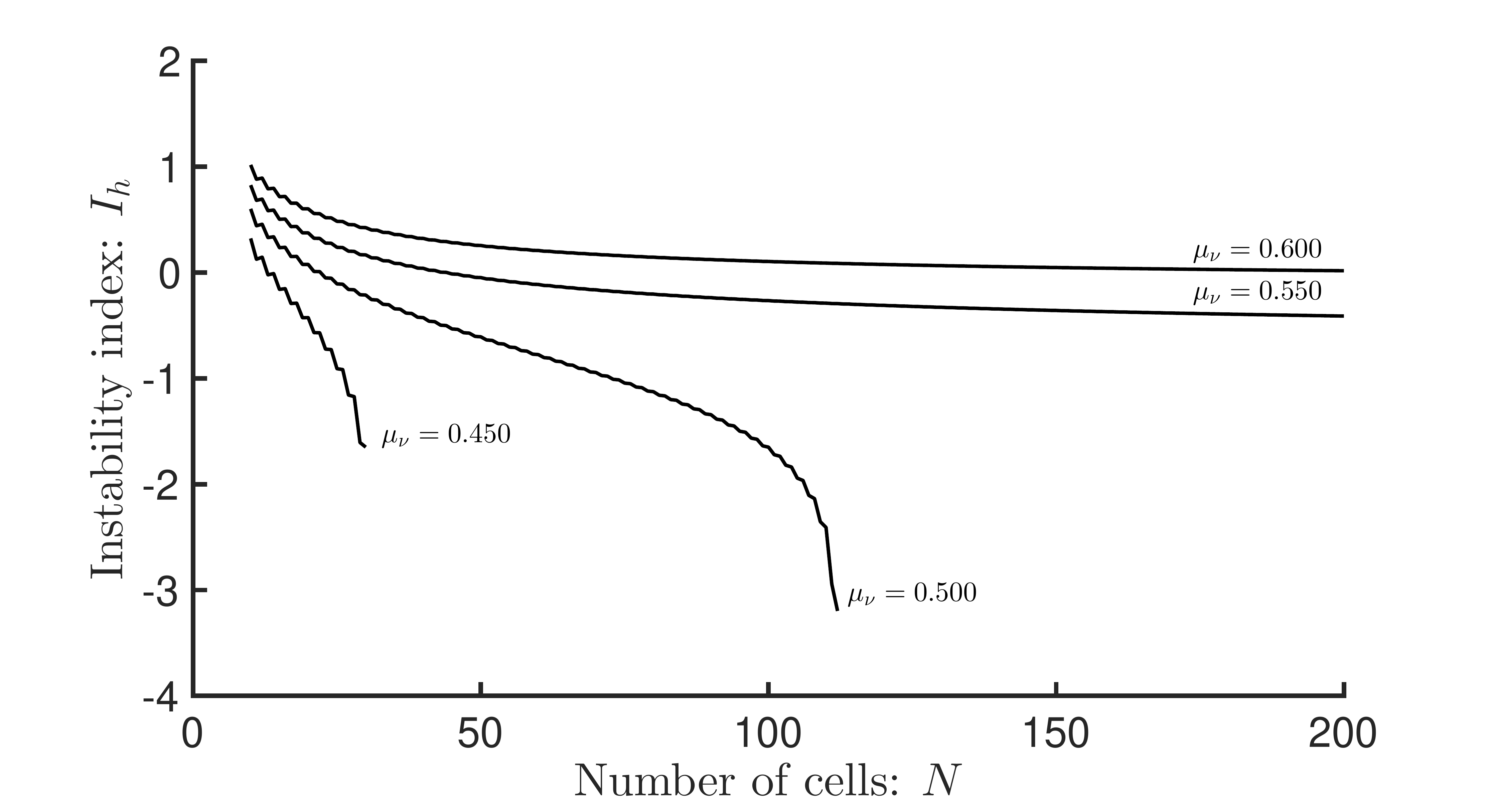}~~
    \includegraphics[trim=0.6in 0.0in 1.1in 0.2in, clip, width=.45\textwidth]{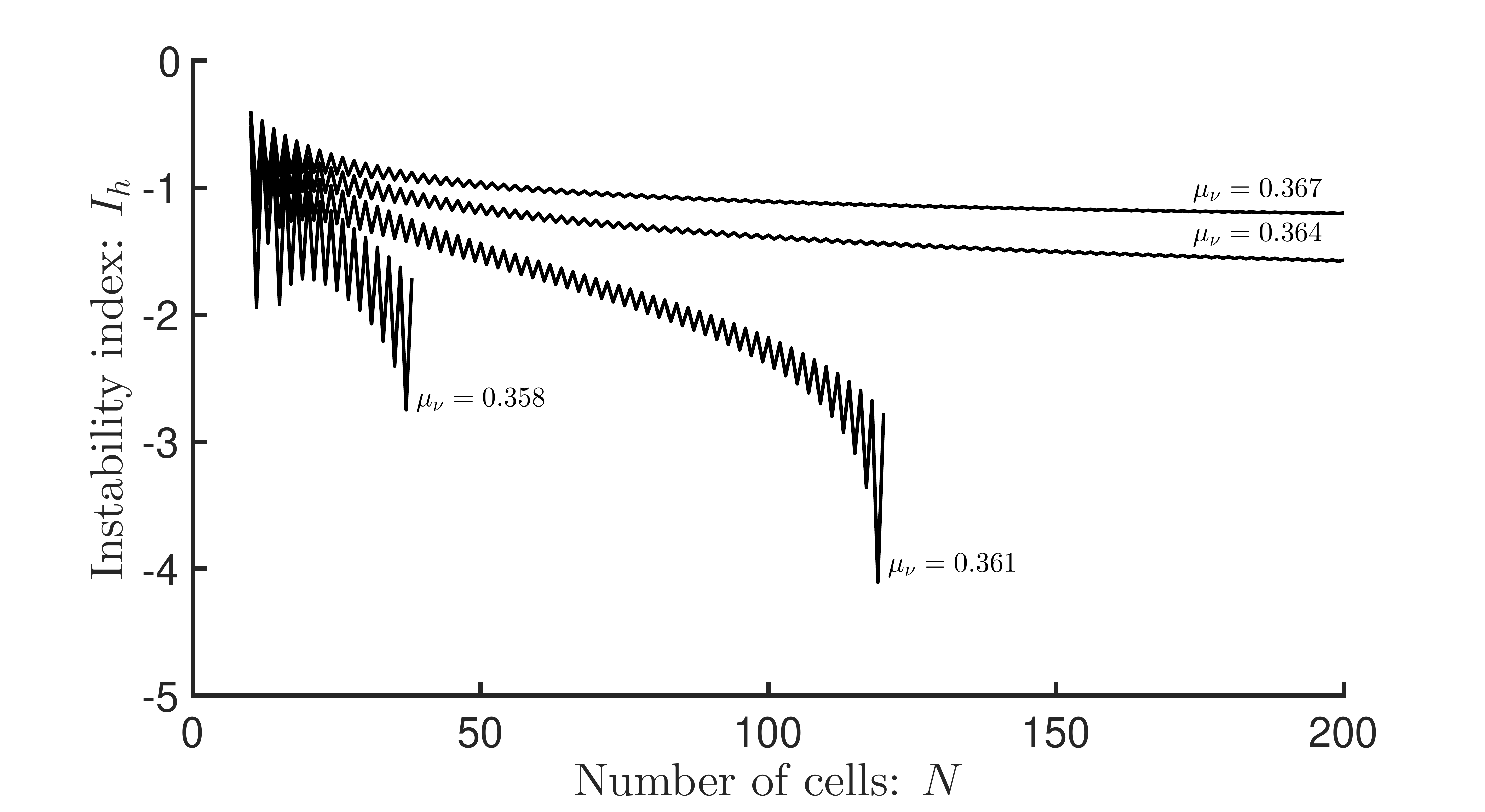}~~
    \caption{RK4 combined with: (left) $\mathcal{D}_x^{3,1}$ and $\mathcal{D}_{xx}^2$, (right) $\mathcal{D}_x^{11,10}$ and $\mathcal{D}_{xx}^{10}$.}
    \label{fg:num_ade_full_rk4}
  \end{subfigure}
  \caption{The {\it instability index} $I_h$ vs. the number of cells $N$ at different values of $\mu_{\nu}=\nu\delta t/h^2$ for linear ADEs.
    A broken curve indicates conditional stability.}
  \label{fg:num_ade_full}
\end{figure}
From the figures, one clearly observes that when $\mu_c$ is below a certain threshold ($\approx1/\beta_0$), the curve breaks at some finite value of $N_c$, which indicates stability of the fully discretized method for all $h<h_c=1/N_c$.

\subsection{Semi-dissipative wave systems}
\label{sec:num_wave}
The combination of discrete operators consists of three FDOs $\mathcal{D}_x^-$, $\mathcal{D}_x^+$, and $\mathcal{D}_{xx}$; and it will be denoted by a triple like ($\mathcal{D}_x^{2,1}$, $\mathcal{D}_x^{1,2}$, $\mathcal{D}_{xx}^2$).

First, we consider the semi-discretized method as before and plot the trajectory $\Lambda(R)$ given by (\ref{eq:wave_traj}). 
Symmetric $\mathcal{D}_x^-$ and $\mathcal{D}_x^+$ are supposed for plots in Figure~\ref{fg:num_wave_semi_sym}, where 
two combinations ($\mathcal{D}_x^{3,1}$, $\mathcal{D}_x^{1,3}$, $\mathcal{D}_{xx}^2$) and ($\mathcal{D}_x^{21,20}$, $\mathcal{D}_x^{20,21}$, $\mathcal{D}_{xx}^{20}$) are considered.
Comparing the trajectories with two different values $R=0.1$ and $R=2.0$, one observes that the ''height'' of the trajectory shrinks as $R$ increases, which is unlike the case of ADEs where the ``height'' of the trajectory seems to be less depend on the value of $R$.
\begin{figure}[ht]\centering
  \begin{subfigure}{\textwidth}\centering
    \includegraphics[trim=1.2in 0.2in 1.2in 0.2in, clip, width=.45\textwidth]{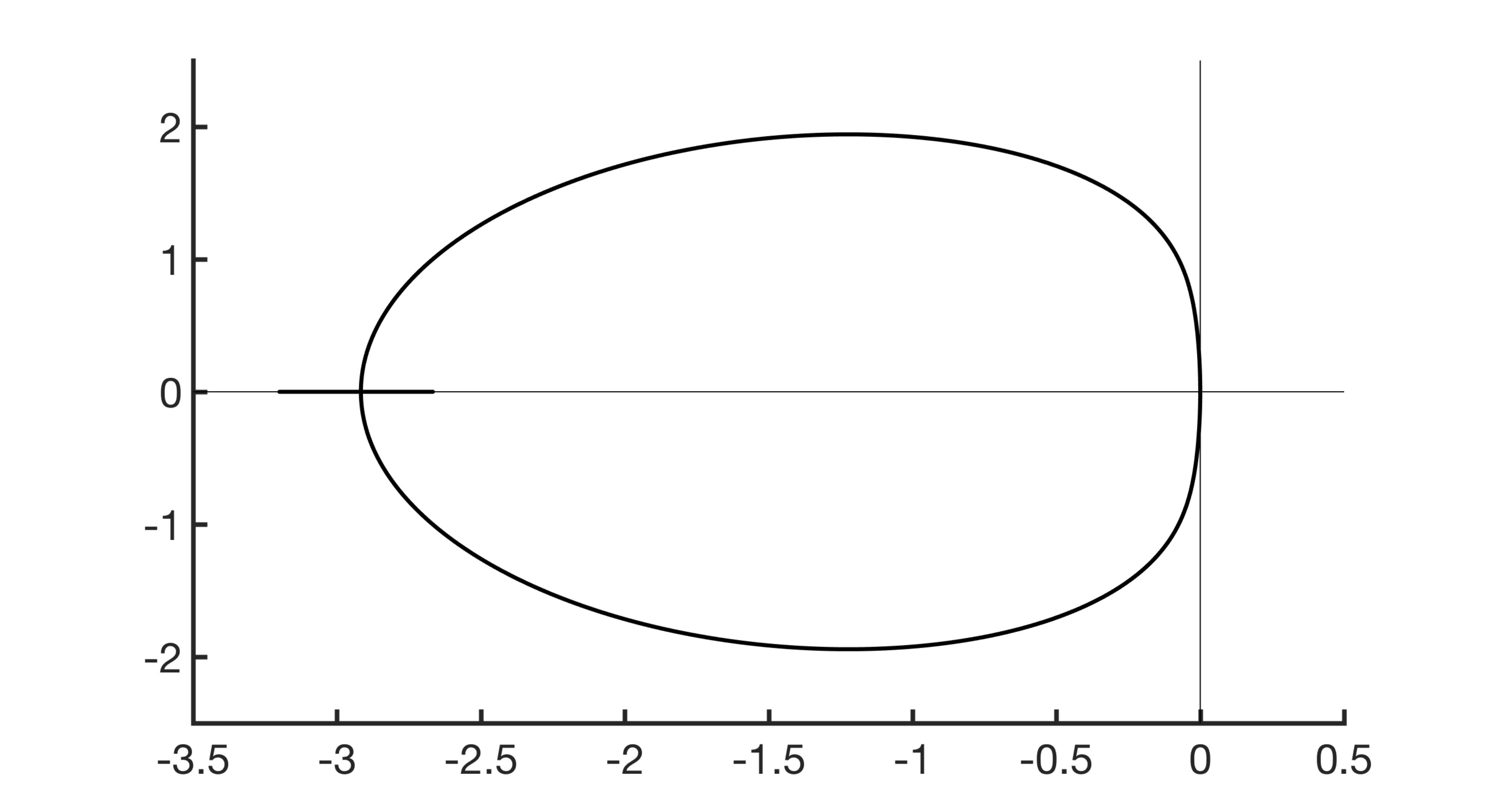}~~
    \includegraphics[trim=1.2in 0.2in 1.2in 0.2in, clip, width=.45\textwidth]{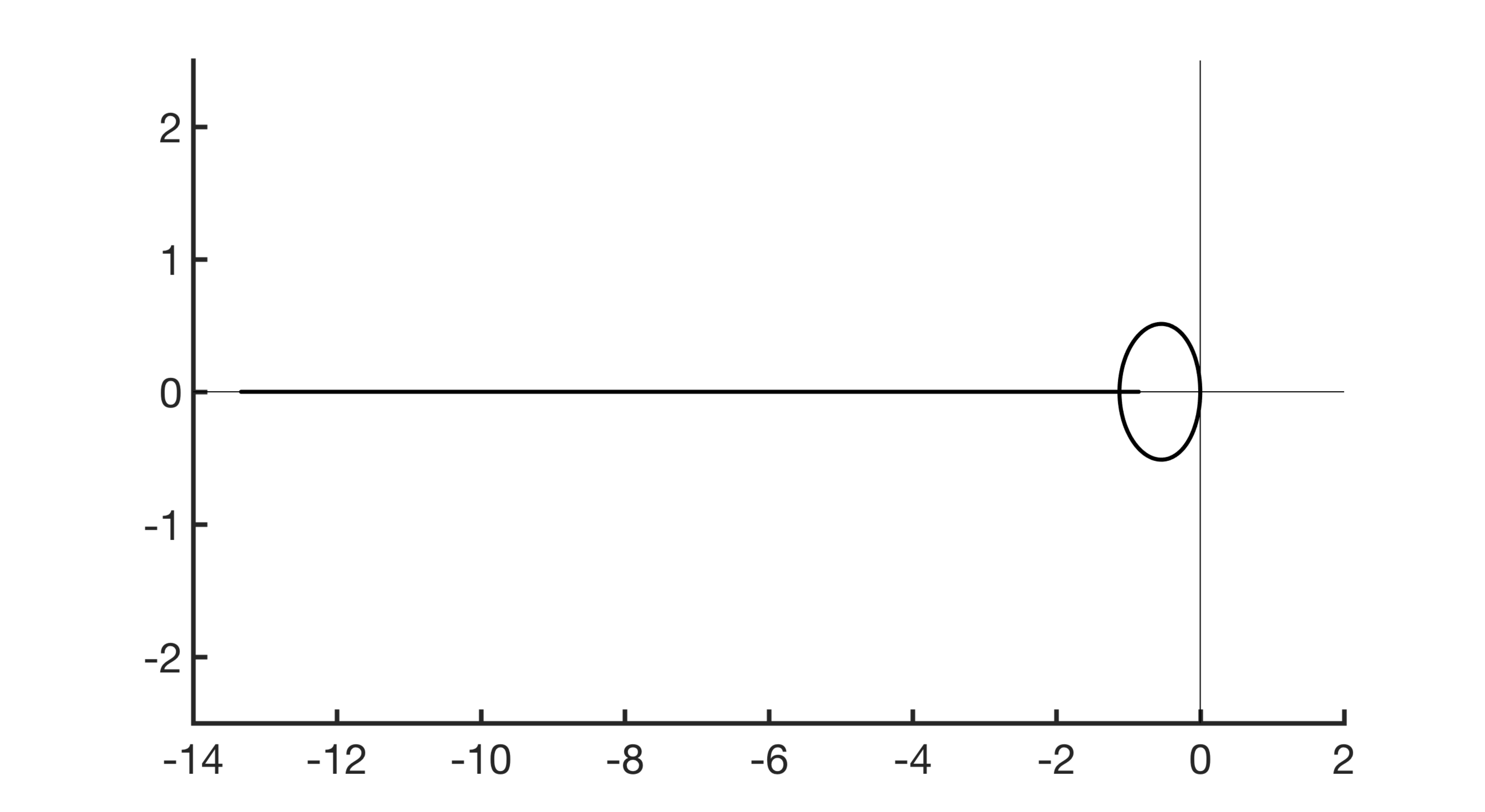}
    \caption{The trajectories $\Lambda(R)$ of ($\mathcal{D}_x^{3,1}$, $\mathcal{D}_x^{1,3}$, $\mathcal{D}_{xx}^2$) with $R=0.1$ (left) and $R=2$ (right).}
    \label{fg:num_wave_semi_sym_a}
  \end{subfigure}
  \begin{subfigure}{\textwidth}\centering
    \includegraphics[trim=1.2in 0.2in 1.2in 0.2in, clip, width=.45\textwidth]{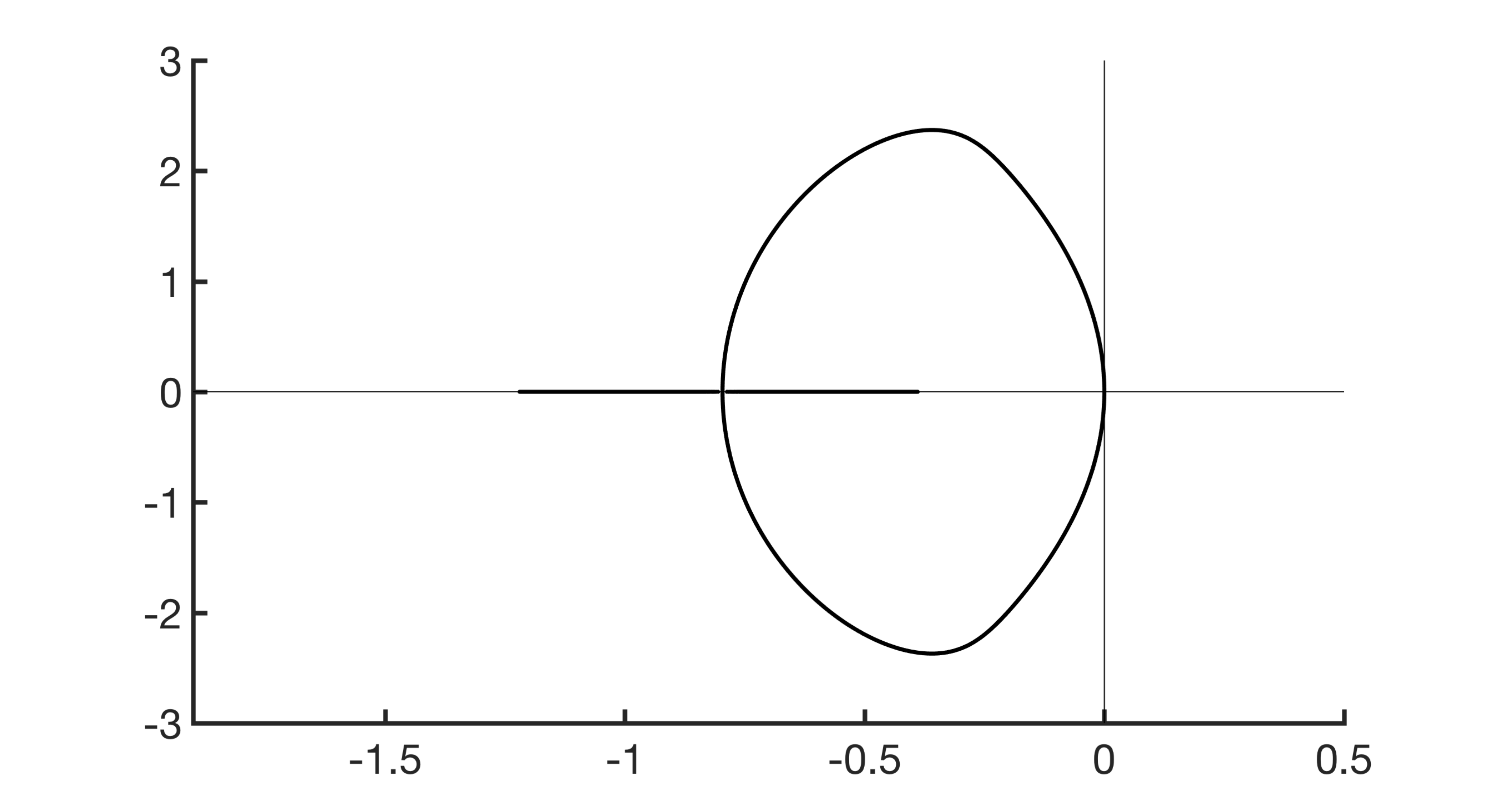}~~
    \includegraphics[trim=1.2in 0.2in 1.2in 0.2in, clip, width=.45\textwidth]{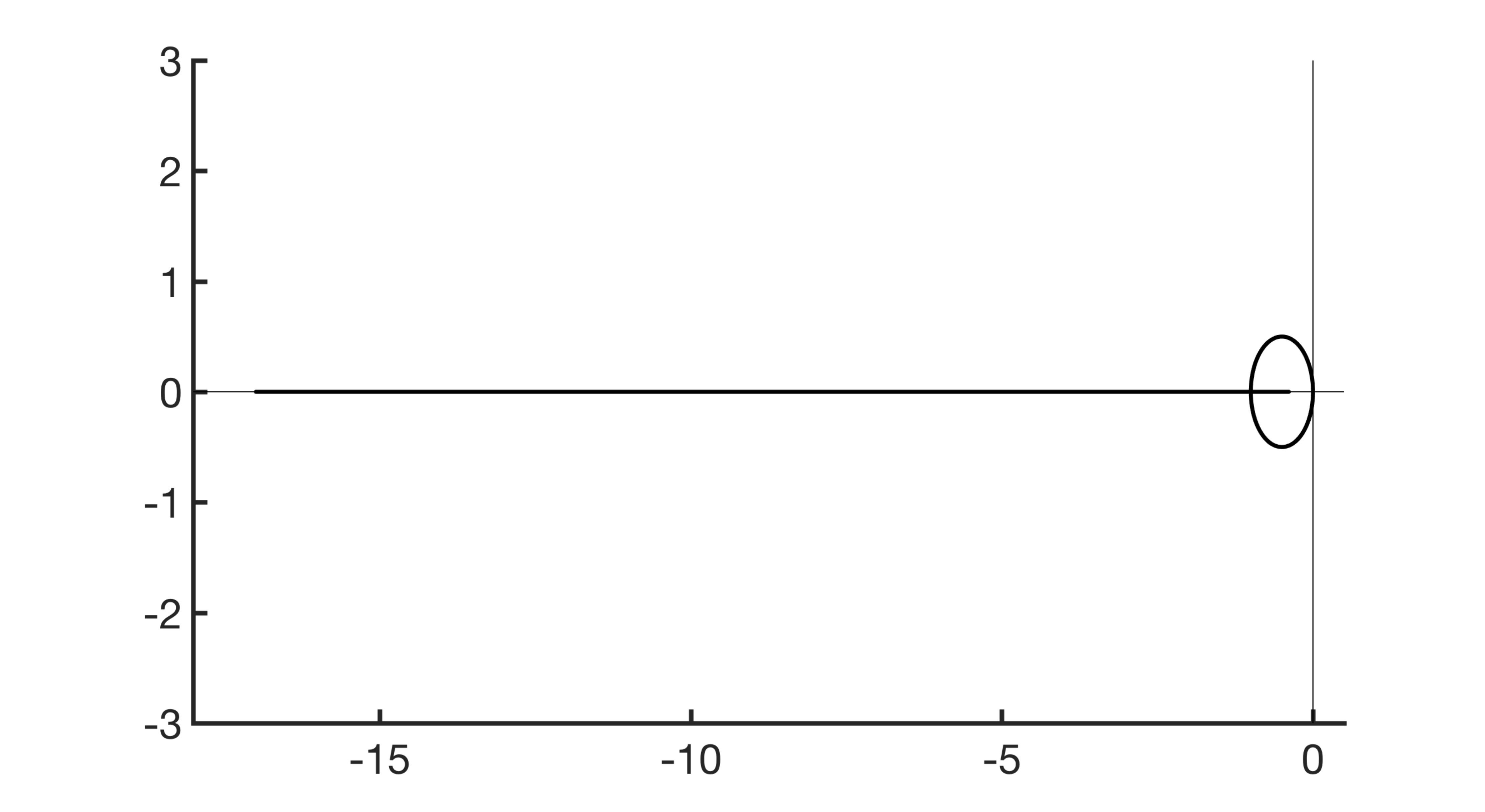}
    \caption{The trajectories $\Lambda(R)$ of ($\mathcal{D}_x^{21,20}$, $\mathcal{D}_x^{20,21}$, $\mathcal{D}_{xx}^{20}$) with $R=0.1$ (left) and $R=2$ (right).}
    \label{fg:num_wave_semi_sym_d}
  \end{subfigure}
  \caption{Trajectories $\Lambda(R)$ of the semi-discretized ODE system of the semi-dissipative wave system using symmetric $\mathcal{D}_x^-$ and $\mathcal{D}_x^+$.}
  \label{fg:num_wave_semi_sym}
\end{figure}

In a second set of the semi-discretization tests, we consider $\mathcal{D}_x^-$ and $\mathcal{D}_x^+$ that are not symmetric; and a similar trend is observed, that is, the ``height'' of $\Lambda(R)$ appears a decreasing function in $R$.
These plots are given in Figure~\ref{fg:num_wave_semi}, where 
two combinations ($\mathcal{D}_x^{3,1}$, $\mathcal{D}_x^{1,2}$, $\mathcal{D}_{xx}^2$) and ($\mathcal{D}_x^{21,20}$, $\mathcal{D}_x^{10,11}$, $\mathcal{D}_{xx}^{20}$) are used to generate the curves.
\begin{figure}[ht]\centering
  \begin{subfigure}{\textwidth}\centering
    \includegraphics[trim=1.2in 0.2in 1.2in 0.2in, clip, width=.45\textwidth]{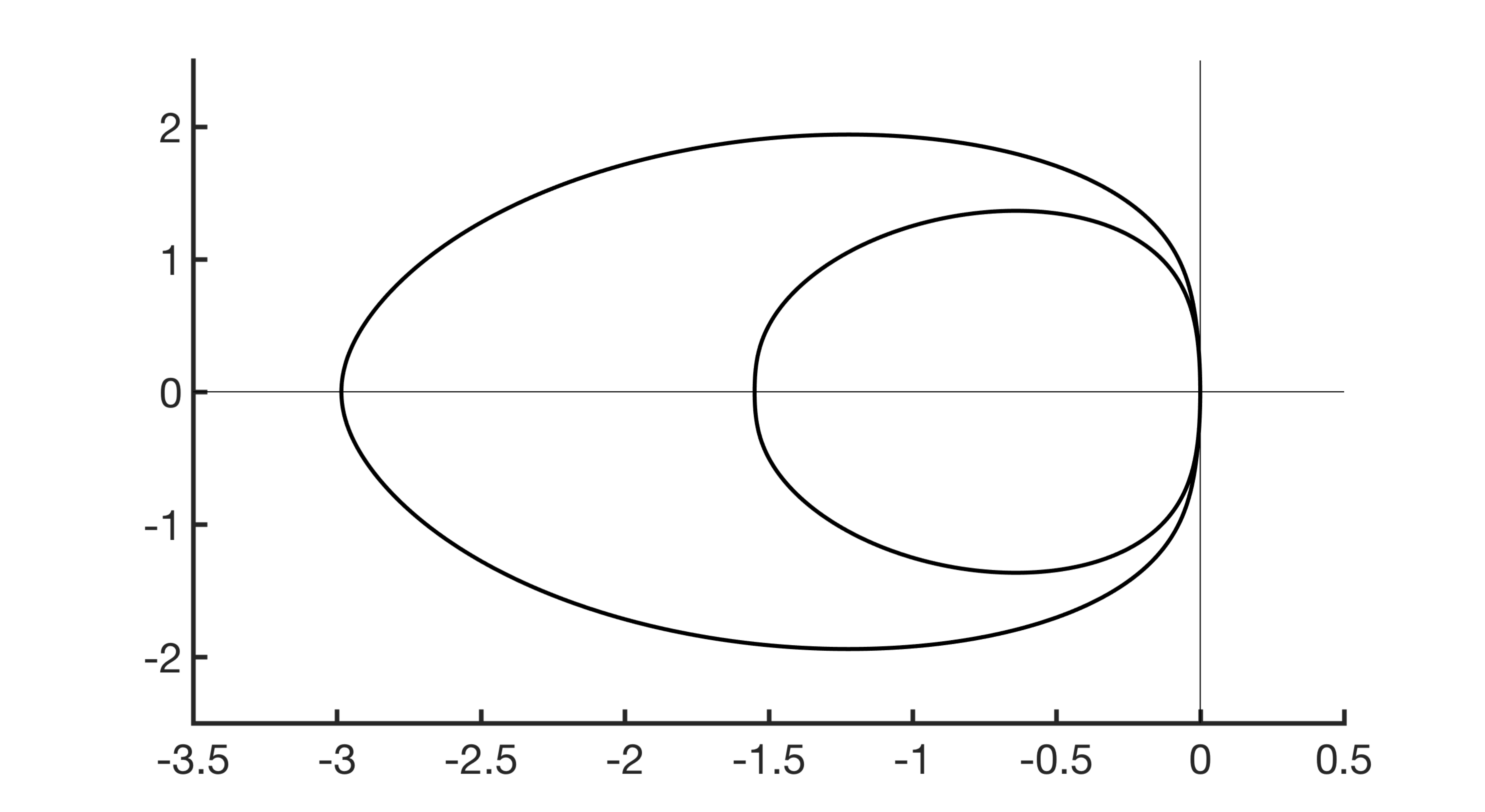}~~
    \includegraphics[trim=1.2in 0.2in 1.2in 0.2in, clip, width=.45\textwidth]{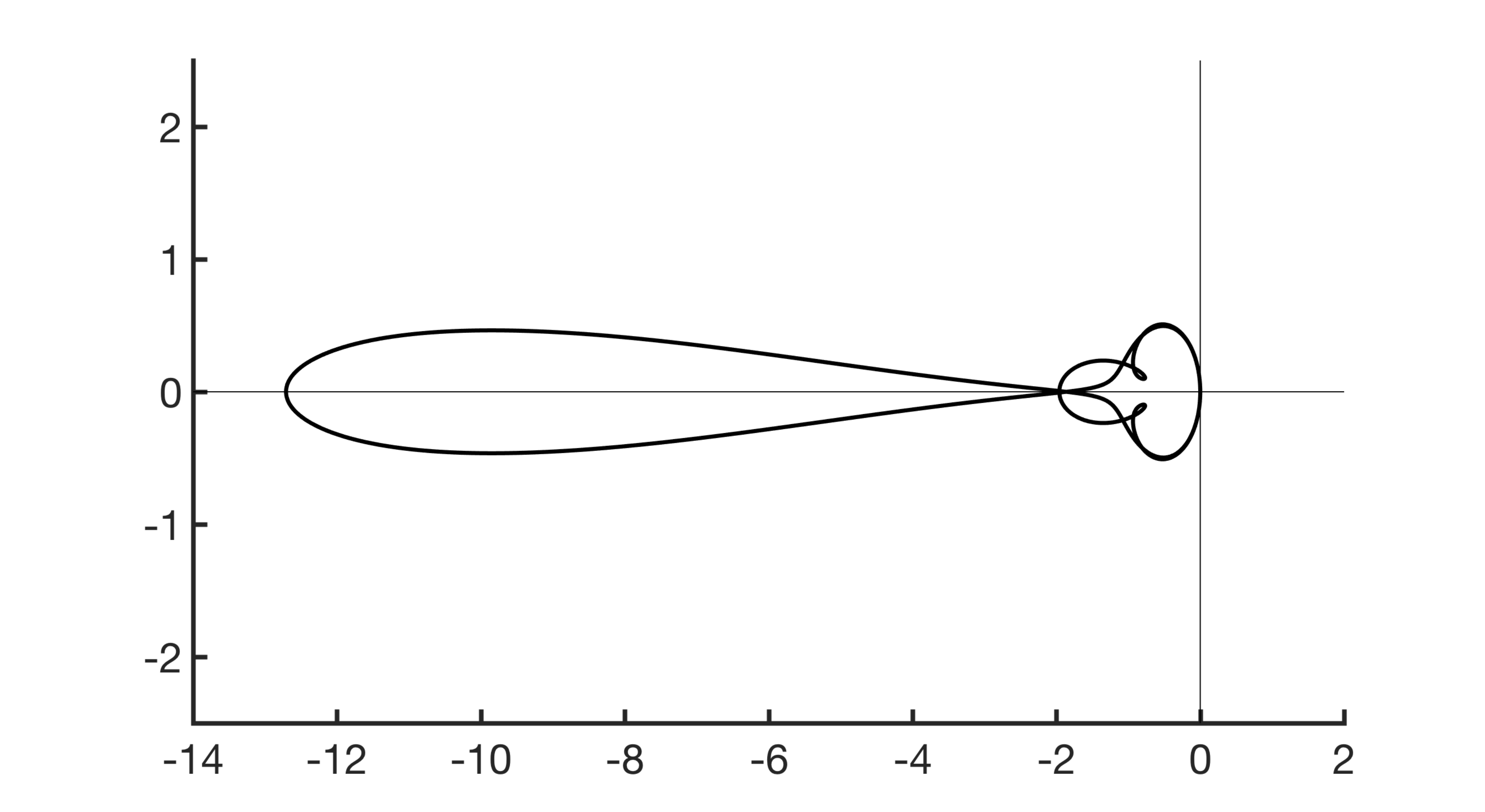}
    \caption{The trajectories $\Lambda(R)$ of ($\mathcal{D}_x^{3,1}$, $\mathcal{D}_x^{1,2}$, $\mathcal{D}_{xx}^2$) with $R=0.1$ (left) and $R=2$ (right).}
    \label{fg:num_wave_semia}
  \end{subfigure}
  \begin{subfigure}{\textwidth}\centering
    \includegraphics[trim=1.2in 0.2in 1.2in 0.2in, clip, width=.45\textwidth]{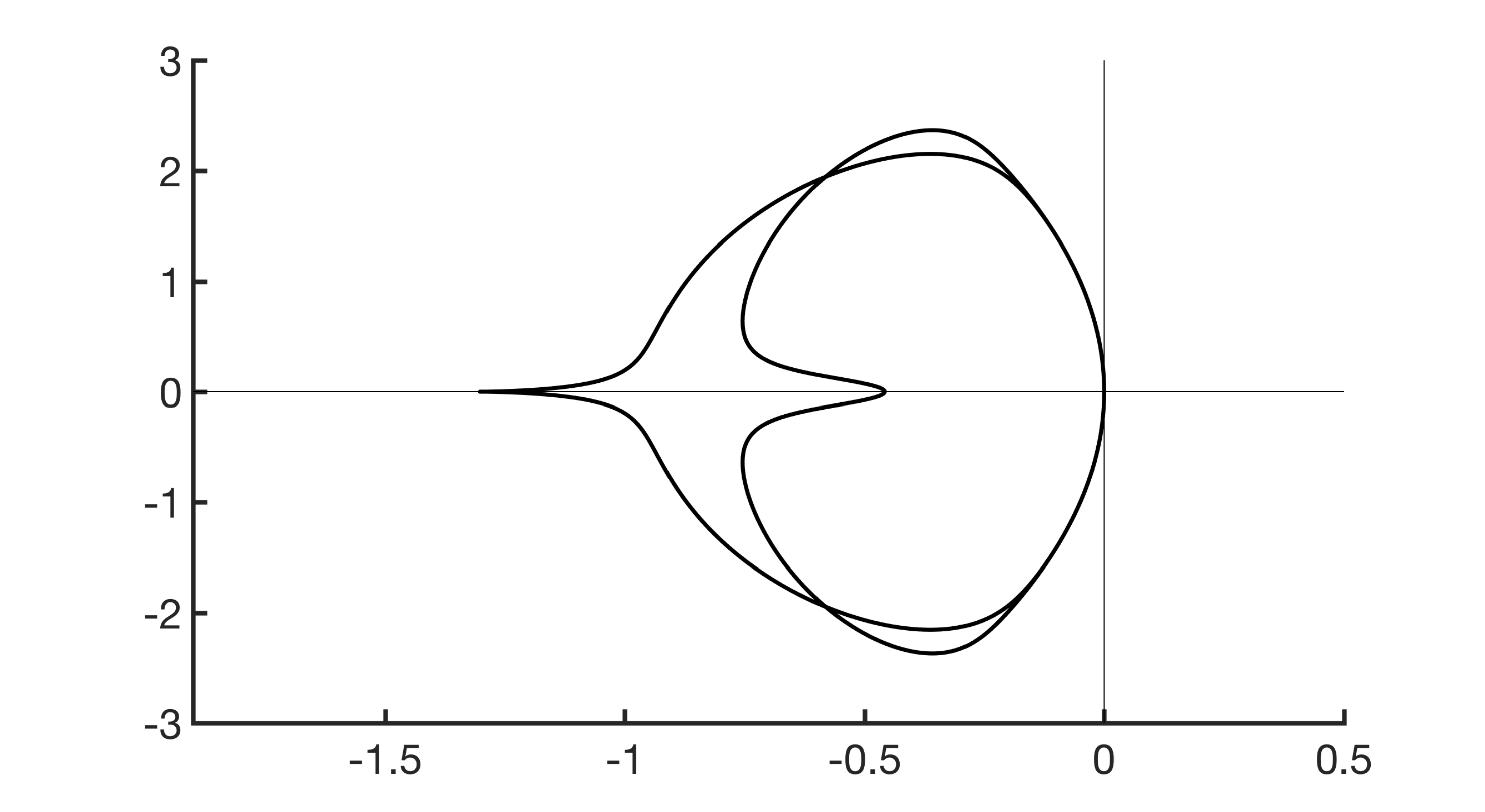}~~
    \includegraphics[trim=1.2in 0.2in 1.2in 0.2in, clip, width=.45\textwidth]{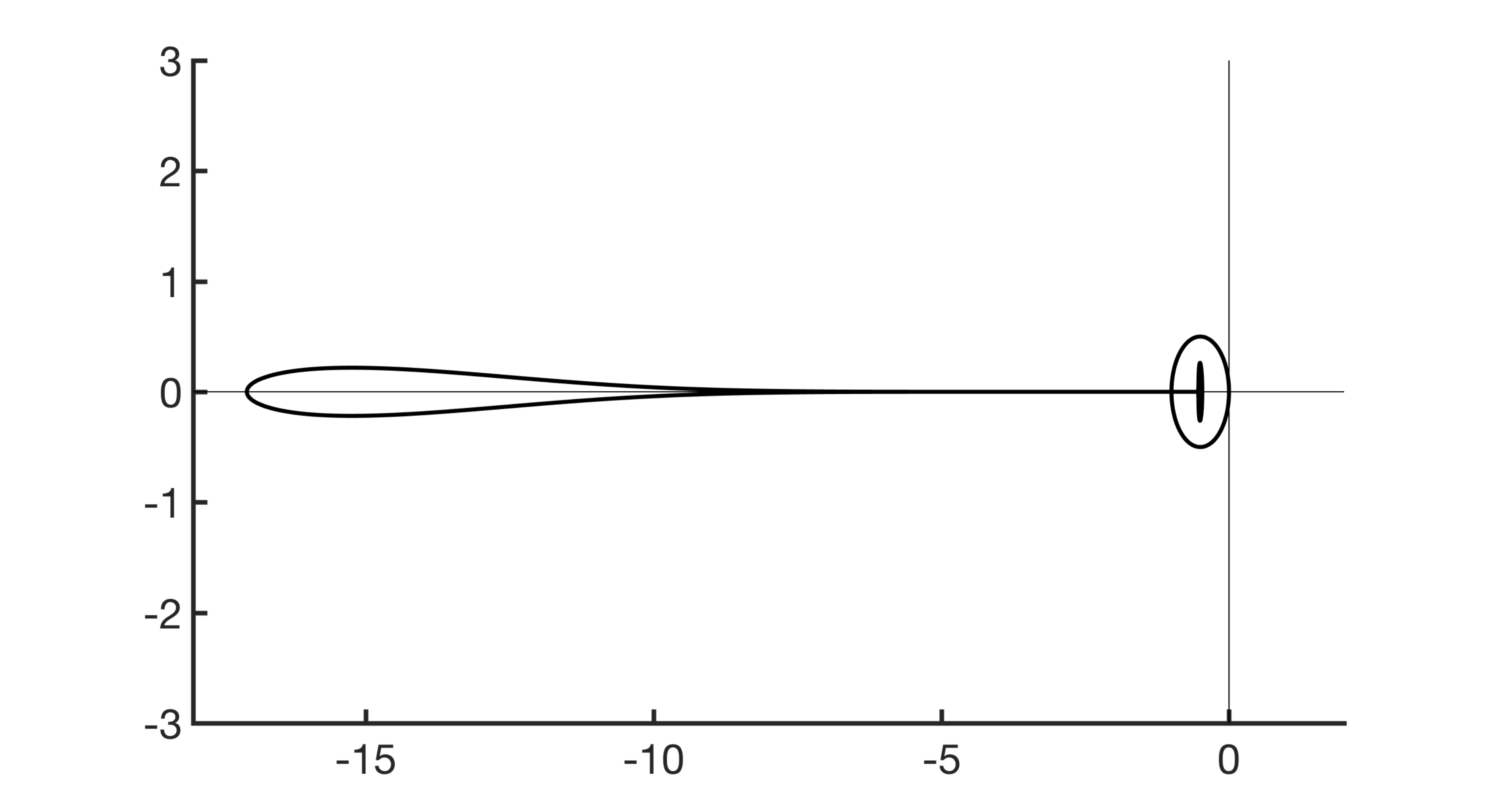}
    \caption{The trajectories $\Lambda(R)$ of ($\mathcal{D}_x^{21,20}$, $\mathcal{D}_x^{10,11}$, $\mathcal{D}_{xx}^{20}$) with $R=0.1$ (left) and $R=2$ (right).}
    \label{fg:num_wave_semi_c}
  \end{subfigure}
  \caption{Trajectories $\Lambda(R)$ of the ODE system after spatial discretization of the semi-dissipative wave system using asymmetric $\mathcal{D}_x^-$ and $\mathcal{D}_x^+$.}
  \label{fg:num_wave_semi}
\end{figure}

Finally, we verify the conditional stability using symmetric $\mathcal{D}_x^-$ and $\mathcal{D}_x^+$ as indicated by Theorem~\ref{thm:wave_sym_hord} by plotting the instability index $I_h$ against the number of cells $N$ at different values $\mu_c=\nu\delta t/h^2$, as in the ADE case.
Combining two spatial discretizations ($\mathcal{D}_x^{3,1}$, $\mathcal{D}_x^{1,3}$, $\mathcal{D}_{xx}^2$) and ($\mathcal{D}_x^{11,10}$, $\mathcal{D}_x^{10,11}$, $\mathcal{D}_{xx}^{10}$) and three time-integrators FE, RK2, RK4, the $I_h$-$N$ curves corresponding to different values of $\mu_c$ are presented in Figure~\ref{fg:num_wave_full}.
\begin{figure}\centering
  \begin{subfigure}{\textwidth}
    \includegraphics[trim=0.6in 0.0in 1.1in 0.2in, clip, width=.45\textwidth]{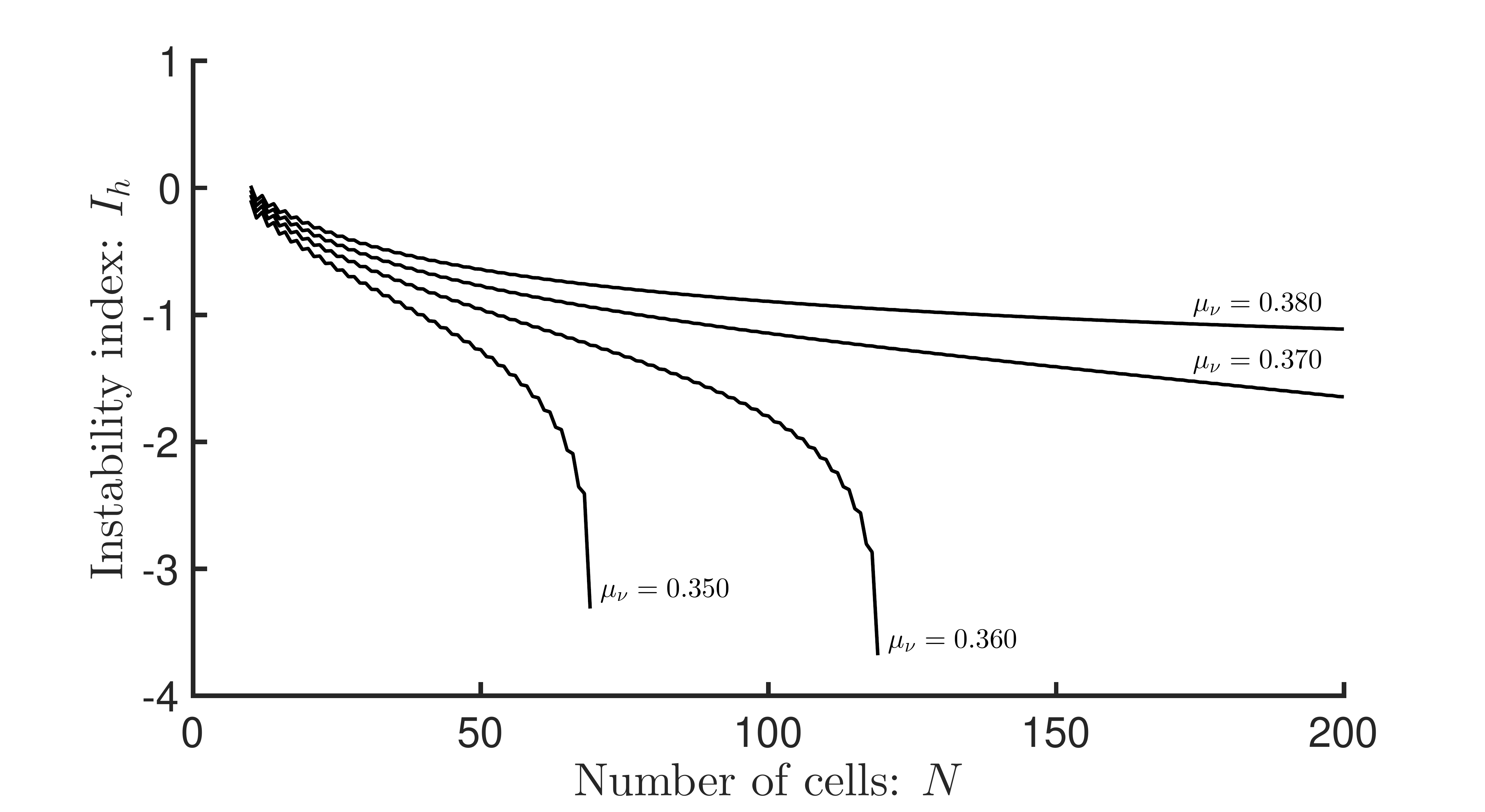}~~
    \includegraphics[trim=0.6in 0.0in 1.1in 0.2in, clip, width=.45\textwidth]{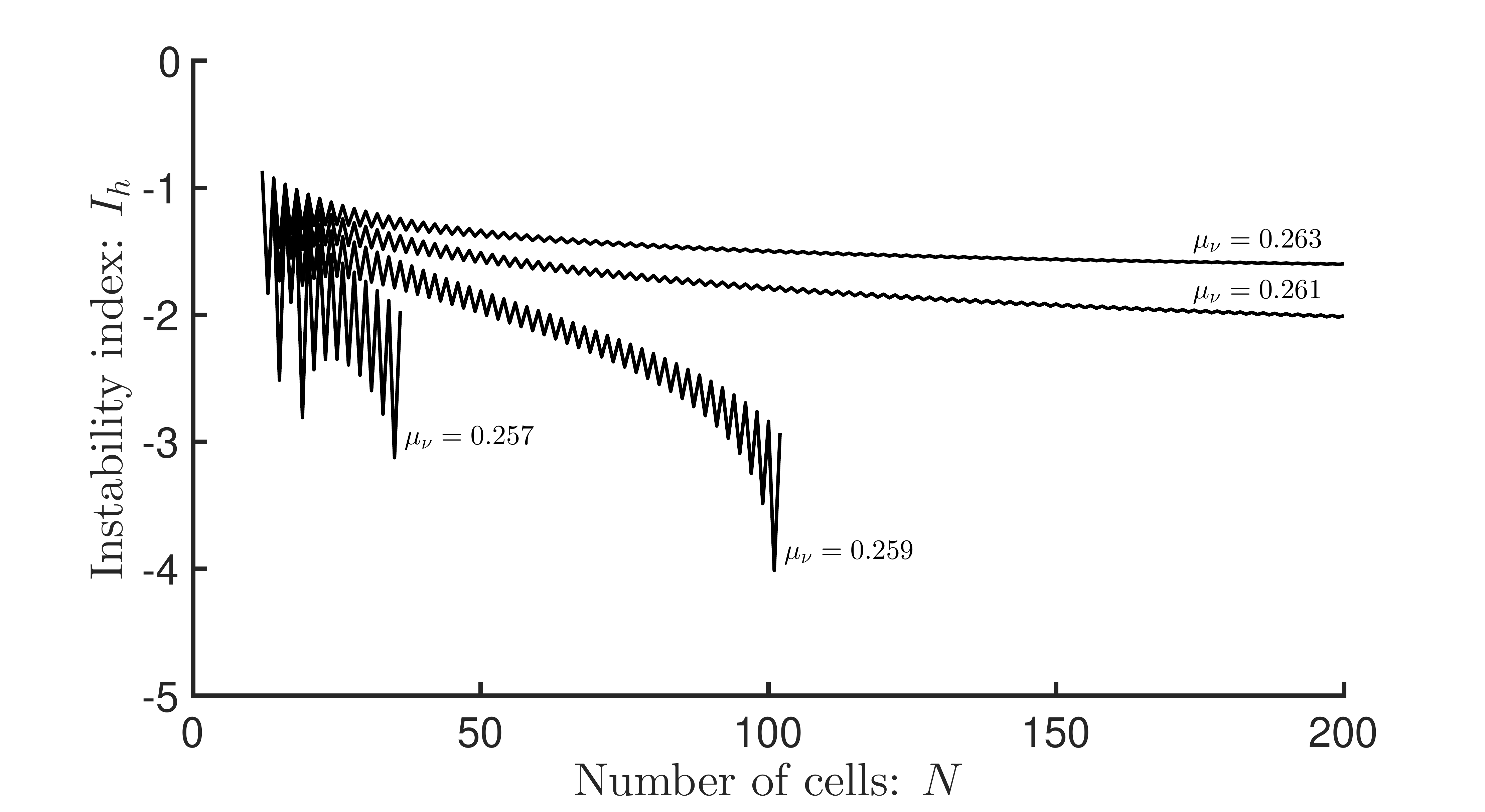}~~
    \caption{FE combined with: (left) $\mathcal{D}_x^{3,1}$, $\mathcal{D}_x^{1,3}$ and $\mathcal{D}_{xx}^2$, (right) $\mathcal{D}_x^{11,10}$, $\mathcal{D}_x^{10,11}$, and $\mathcal{D}_{xx}^{10}$.}
    \label{fg:num_wave_full_fe}
  \end{subfigure}
  \begin{subfigure}{\textwidth}
    \includegraphics[trim=0.6in 0.0in 1.1in 0.2in, clip, width=.45\textwidth]{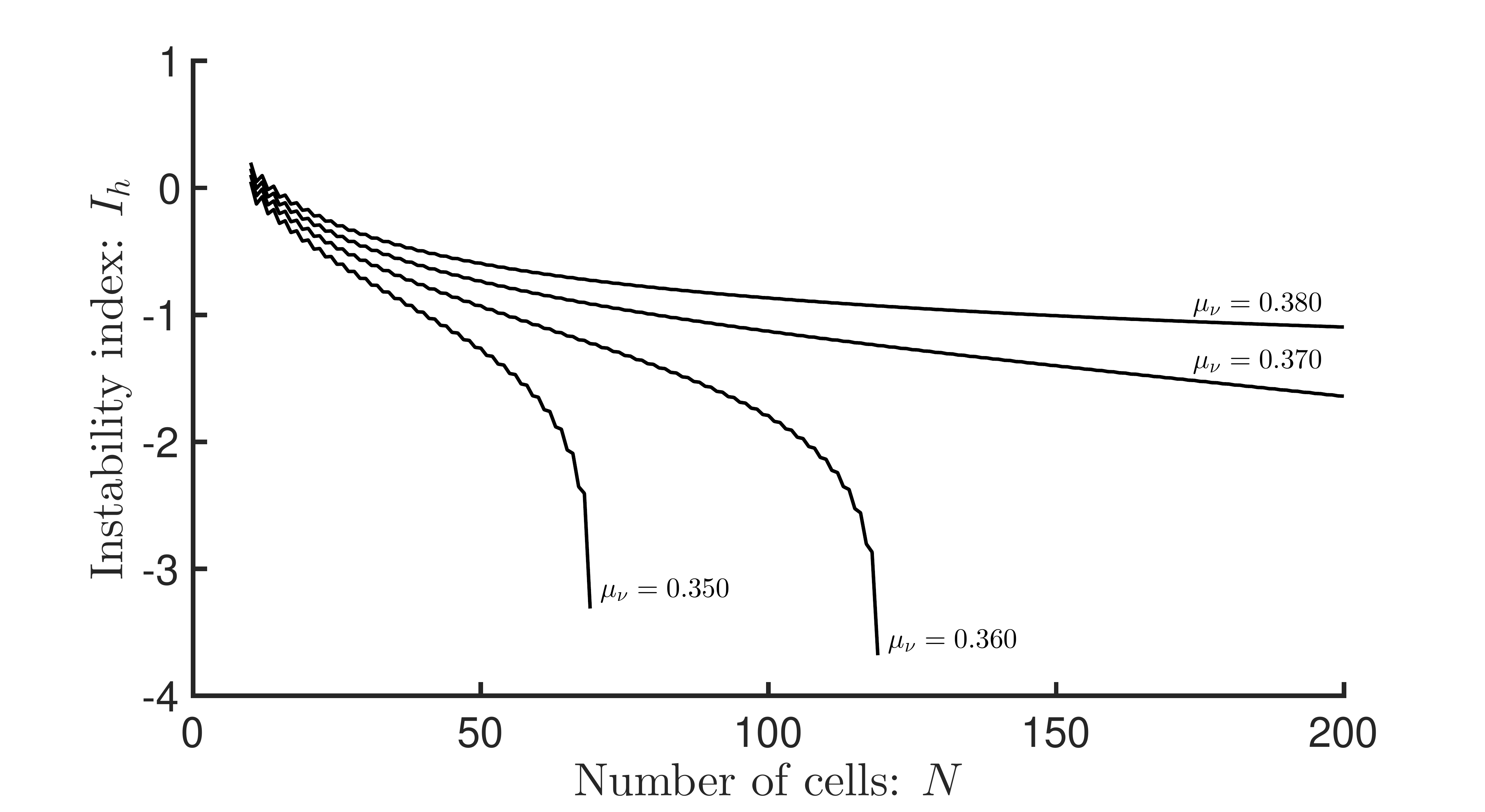}~~
    \includegraphics[trim=0.6in 0.0in 1.1in 0.2in, clip, width=.45\textwidth]{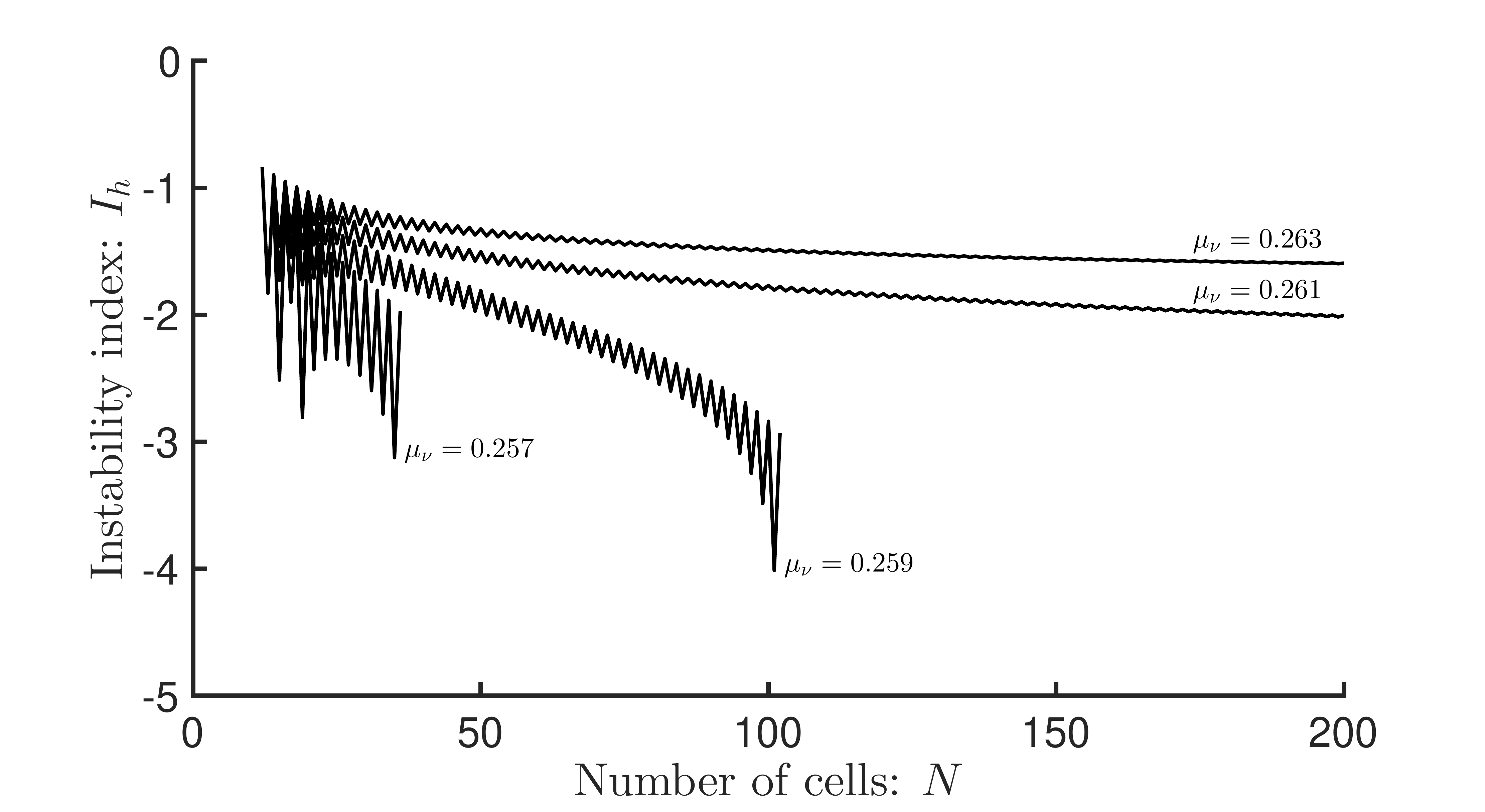}~~
    \caption{RK2 combined with: (left) $\mathcal{D}_x^{3,1}$, $\mathcal{D}_x^{1,3}$ and $\mathcal{D}_{xx}^2$, (right) $\mathcal{D}_x^{11,10}$, $\mathcal{D}_x^{10,11}$, and $\mathcal{D}_{xx}^{10}$.}
    \label{fg:num_wave_full_rk2}
  \end{subfigure}
  \begin{subfigure}{\textwidth}
    \includegraphics[trim=0.6in 0.0in 1.1in 0.2in, clip, width=.45\textwidth]{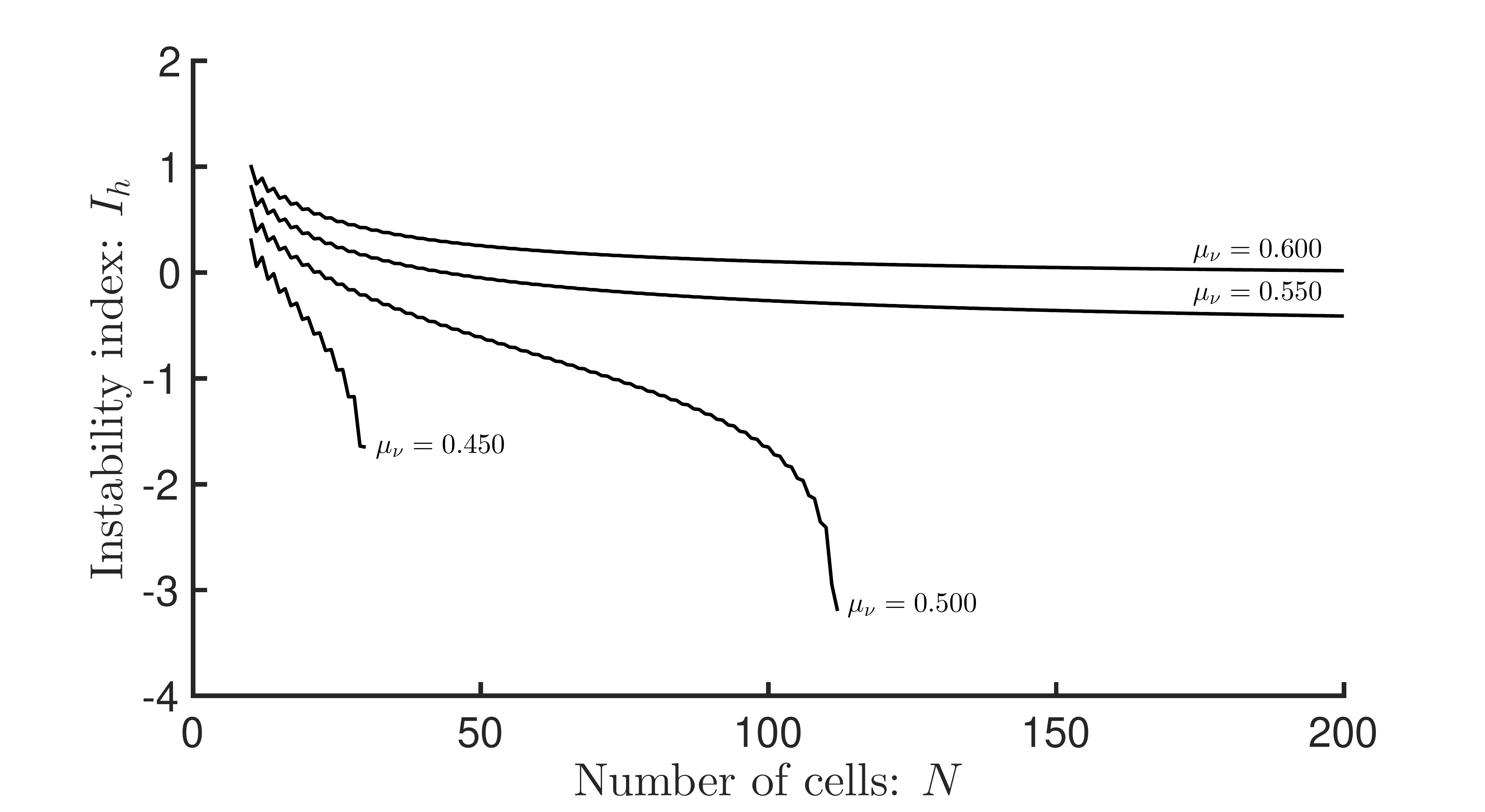}~~
    \includegraphics[trim=0.6in 0.0in 1.1in 0.2in, clip, width=.45\textwidth]{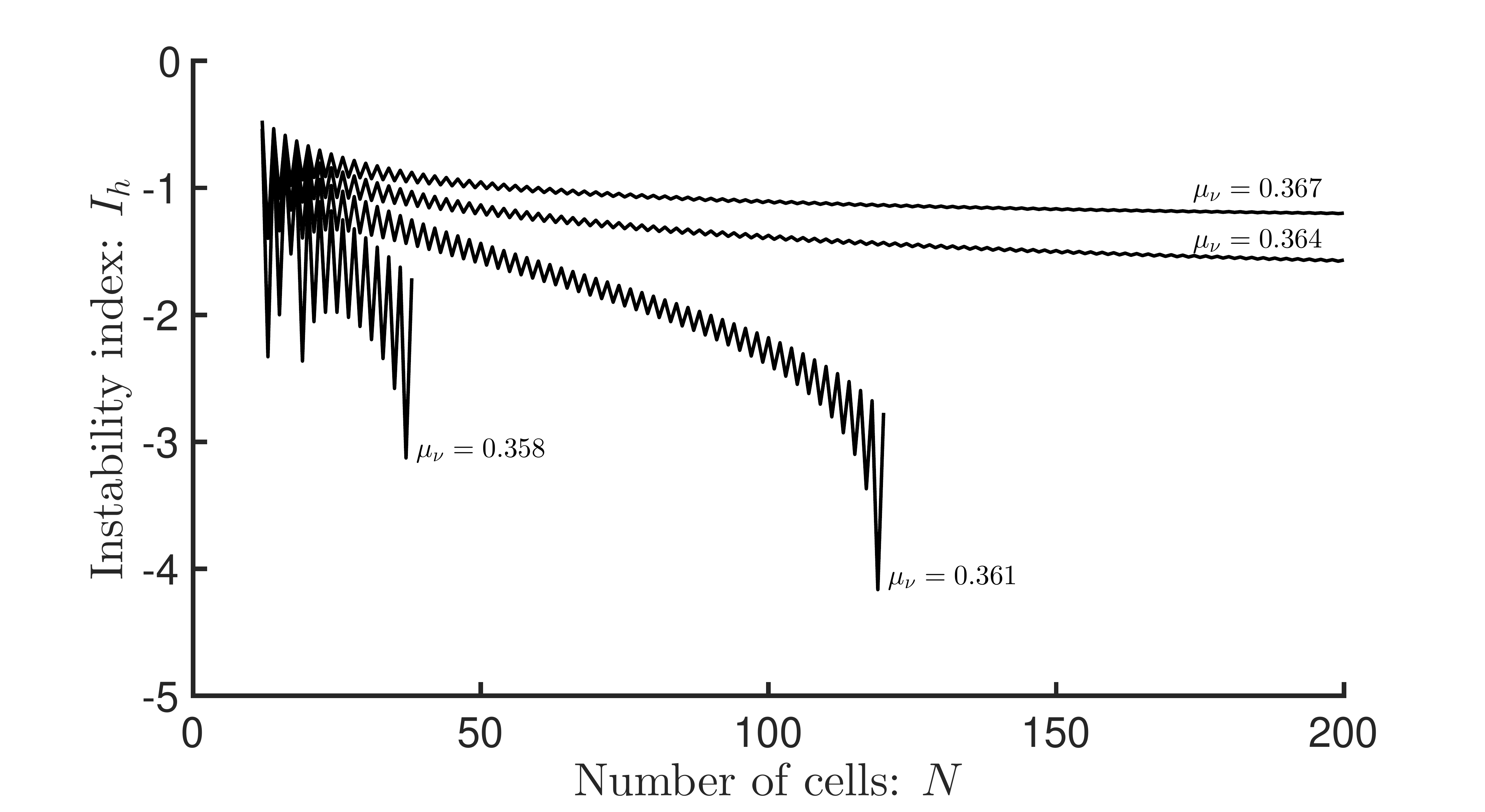}~~
    \caption{RK4 combined with: (left) $\mathcal{D}_x^{3,1}$, $\mathcal{D}_x^{1,3}$ and $\mathcal{D}_{xx}^2$, (right) $\mathcal{D}_x^{11,10}$, $\mathcal{D}_x^{10,11}$, and $\mathcal{D}_{xx}^{10}$.}
    \label{fg:num_wave_full_rk4}
  \end{subfigure}
  \caption{The {\it instability index} $I_h$ vs. the number of cells $N$ at different values of $\mu_{\nu}=\nu\delta t/h^2$ for the semi-dissipative wave system.
    A broken curve indicates conditional stability.}
  \label{fg:num_wave_full}
\end{figure}
Similar as in the ADE case, there appears to be a threshold below which the curve breaks beyond a certain point $N_c$, indicating the stability of the fully discretized method for all $h<h_c=1/N_c$.

\section{Conclusions}
\label{sec:concl}
In this work, we present some general stability results regarding finite difference discretizations with arbitrary order of accuracy for linear advection-diffusion equations and a partially dissipative wave system.
A major motivation for this study is to gain insights into how the stability may be affected in a common practice of many application areas, where an upwind-biased discretization scheme for the advection term is combined with a independently chosen central scheme for the diffusion term.
To this end, we show that if a stable scheme is selected to discretize the advection term and any central method is used in discretizing the diffusion term, the resulting semi-discretized method gives rise to a stable linear ODE system.
Furthermore, it leads to a conditionally stable fully-discretized method when combined with any time-integrator that is at least first-order accurate.
As a byproduct of the analysis, we prove that high-order spatial discretization cannot be paired with some popular lower-order time-integrators to yield a stable method for solving the linear advection equation.

For simplicity, we have assumed periodic boundary conditions and explicit Runge-Kutta time-integrators in the context of method of lines.
However, our results remain valuable when these limitations are lifted.
In particular, in the view of a classical theory presented by Godunov and Ryabenkii, the stability criterion remains necessary for arbitrary enforcement of non-periodic boundary conditions in the limit $h\to0$.
Whereas if implicit or multi-step methods are selected for integration in time, our analysis easily applies as it only makes use of the stability region of these schemes.

\bibliographystyle{unsrt}
%\bibliography{pap}

\end{document}